\DeclareMathOperator{\tr}{Tr}
\DeclareMathOperator{\ad}{ad}
\DeclareMathOperator{\inj}{inj}
\DeclareMathOperator{\Vol}{Vol}
\DeclareMathOperator{\Span}{Span}
\DeclareMathOperator{\SO}{SO}
\DeclareMathOperator{\GL}{GL}
\DeclareMathOperator{\T}{T}
\DeclareMathOperator{\Rm}{Rm}
\newcommand{\R}{\mathbb R}
\newcommand{\Z}{\mathbb Z}
\renewcommand{\T}{\mathbb T}
\newcommand{\cT}{\mathcal{T}}
\newcommand{\diff}{\text{\rm d}}
\newcommand{\del}{\partial}
\theoremstyle{plain}
\newtheorem{theorem}{Theorem}
\newtheorem{proposition}[theorem]{Proposition}
\newtheorem{lemma}[theorem]{Lemma}
\newtheorem{conjecture}[theorem]{Conjecture}
\newtheorem{question}[theorem]{Question}
\theoremstyle{definition}
\newtheorem{definition}[theorem]{Definition}
\newtheorem{remark}[theorem]{Remark}
\theoremstyle{plain}
\newtheorem*{solution*}{Solution}
\newtheorem*{theorem*}{Theorem}
\newtheorem*{proposition*}{Proposition}
\newtheorem*{lemma*}{Lemma}
\newtheorem*{corollary*}{Corollary}
\newtheorem*{conjecture*}{Conjecture}
\theoremstyle{definition}
\newtheorem*{definition*}{Definition}
\newtheorem*{remark*}{Remark}
\newtheorem*{remarks*}{Remarks}
\numberwithin{equation}{section}
\numberwithin{theorem}{section}
\author[J. Fine]{Joel Fine}
\address{D\'epartement de Math\'ematiques, Universit\'e Libre de Bruxelles, Brussels, Belgium;}
\email{joel.fine@ulb.be}
\author[W.-Y. He]{Weiyong He}
\address{Department of Mathematics, University of Oregon, Eugene, OR 97403, USA}
\email{whe@uoregon.edu}
\author[C.-J. Yao]{Chengjian Yao}
\address{Institute of Mathematical Sciences, ShanghaiTech University, Pudong New District,
	Shanghai, 201210, China.}
\email{yaochj@shanghaitech.edu.cn}
\begin{document}

	\title{Convergence of the hypersymplectic flow on $\T^4$ with $\T^3$-symmetry}
	\date{\today}

 \begin{abstract}
 	A hypersymplectic structure on a 4-manifold is a triple $\omega_1, \omega_2, \omega_3$ of 2-forms for which every non-trivial linear combination $a^1\omega_1 + a^2 \omega_2 + a^3 \omega_3$ is a symplectic form. Donaldson has conjectured that when the underlying manifold is compact, any such structure is isotopic in its cohomolgy class to a hyperk\"ahler triple. We prove this conjecture for a hypersymplectic structure on $\T^4$ which is invariant under the standard $\T^3$ action. The proof uses the hypersymplectic flow, a geometric flow which attempts to deform a given hypersymplectic structure to a hyperk\"ahler triple. We prove that on $\T^4$, when starting from a $\T^3$-invariant hypersymplectic structure, the flow exists for all time and converges modulo diffeomorphisms to the unique cohomologous hyperk\"ahler structure. 
 \end{abstract}

	\maketitle	

\section{Introduction}\label{introduction}

\subsection{Overview}

A hypersymplectic structure $\underline\omega$ on a smooth $4$-manifold $X^4$ is a triple of closed 2-forms $\underline\omega=\left(\omega_1, \omega_2,\omega_3\right)$ for which $a^1\omega_1+a^2\omega_2+a^3\omega_3$ is symplectic for any $(a^1,a^2,a^3)\in \R^3\setminus\{ 0\}$. The simplest example is the triple of K\"ahler forms of a hyperk\"ahler metric. Donaldson has conjectured that up to isotopy, on a compact manifold this is the \emph{only} example \cite{Don}:

\begin{conjecture}[Donaldson]\label{Donaldson-conjecture}
Let $\underline{\omega} = (\omega_1, \omega_2, \omega_3)$ be a hypersymplectic structure on a compact 4-manifold $X$ with $\int \omega_i \wedge \omega_j = 2 \delta_{ij}$. Then $\underline{\omega}$ can be deformed through cohomologous hypersymplectic structures to the triple of K\"ahler forms coming from a hyperk\"ahler metric on $X$. 
\end{conjecture}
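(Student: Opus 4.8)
The plan is to attack Donaldson's conjecture in full generality through the hypersymplectic flow of Fine--Yao, the parabolic deformation designed precisely to drive an arbitrary hypersymplectic triple $\underline\omega$ toward a hyperk\"ahler one while fixing the cohomology class $[\underline\omega]$. Writing $\omega_i\wedge\omega_j = 2 Q_{ij}\,\mu$ for the induced volume form $\mu = \mu_{\underline\omega}$ and a symmetric positive-definite matrix-valued function $Q=(Q_{ij})$, the triple is hyperk\"ahler exactly when $Q\equiv\mathrm{Id}$. The flow is a quasilinear second-order equation of the schematic form $\partial_t\omega_i = d\big((Q^{-1})_{ij}\,d^*\omega_j\big)$, parabolic modulo diffeomorphism in the manner of DeTurck; in particular each $\partial_t\omega_i$ is exact, so $[\omega_i]$ and the normalization $\int\omega_i\wedge\omega_j=2\delta_{ij}$ are preserved. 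Short-time existence and uniqueness, and a smooth evolving family of induced metrics $g_{\underline\omega(t)}$, are the point of departure, so the entire burden falls on long-time existence and convergence.

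\textbf{Long-time existence.} The continuation criterion reduces all-time existence to uniform a priori bounds on the curvature $\Rm(g_{\underline\omega})$ and on $\|Q^{-1}\|$ (nondegeneracy of the triple). One derives evolution inequalities for $Q$ and for $|\Rm|^2$ and seeks $\varepsilon$-regularity together with Shi-type interior estimates. Where curvature concentrates one performs a parabolic blow-up: a suitable rescaling at a would-be singular point converges to a complete eternal, in fact static, hyperk\"ahler model --- a gravitational instanton of ALE/ALF/ALG/ALH type, or flat space. The objective is to rule out every such bubble using the fixed cohomological pairing, the finiteness of the total volume $\int\mu$, and the topology of $X$ (the classes $[\omega_i]$ span a positive-definite $3$-plane, forcing $b^+\ge 3$), thereby excluding finite-time singularities.

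\textbf{Convergence.} Granting all-time existence, one studies $t\to\infty$. Hitchin's volume functional (equivalently the $G_2$-volume of the associated structure on $X\times\T^3$) is monotone along the flow, bounded in terms of the fixed cohomology, and has the hyperk\"ahler triples as its only critical points; hence $\partial_t\underline\omega\to 0$ along a subsequence. One must then prevent collapse and escape in moduli space by controlling injectivity radius and diameter, extracting a Cheeger--Gromov limit that is a complete hyperk\"ahler $4$-manifold. Finally this limit is identified with \emph{the} cohomologous hyperk\"ahler structure, whose existence and uniqueness follow from Yau's theorem and the global Torelli theorem once $X$ is pinned down as $\T^4$ or a K3 surface (itself part of the program, via the constraints a hypersymplectic structure places on the intersection form). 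Subsequential convergence is upgraded to convergence of the whole flow, modulo diffeomorphism, by a \L{}ojasiewicz--Simon inequality for the energy near its hyperk\"ahler critical points.

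The main obstacle is the singularity and bubbling analysis of the long-time existence step in the absence of symmetry. In the $\T^3$-invariant setting on $\T^4$ the symmetry collapses the equation to an essentially one-dimensional problem on the quotient, where the required estimates and the exclusion of bubbles become tractable; for a general compact $X$ one must genuinely classify and rule out the hyperk\"ahler bubbles compatible with the fixed cohomology class and finite volume, and simultaneously exclude collapse at infinity. These interlocking analytic problems are what keep the conjecture open, and they form the heart of any complete proof by the flow.
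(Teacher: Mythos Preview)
The statement you are addressing is a \emph{conjecture}, and the paper does not prove it; it proves only the special case of $\T^3$-invariant hypersymplectic structures on $\T^4$ in symmetric normal form (Theorem~\ref{main-result}). Your proposal is accordingly not a proof but a research program, and you explicitly acknowledge this in your final paragraph. So there is no ``paper's own proof'' to compare against, and your honest admission that the singularity analysis and non-collapsing are open is exactly right.

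That said, a few points in your outline are inaccurate or misleading relative to what is actually known. First, the flow is $\partial_t\underline\omega = d\big(Q\,d^*(Q^{-1}\underline\omega)\big)$, not $d\big(Q^{-1}d^*\omega\big)$; the placement of $Q$ and $Q^{-1}$ matters. Second, the hyperk\"ahler condition is that $Q$ be \emph{constant}, not that $Q\equiv\mathrm{Id}$; the normalization $\int\omega_i\wedge\omega_j=2\delta_{ij}$ only fixes the spatial average of $Q$. Third, the known extension criterion (Theorem~\ref{FY-extension}) is a bound on the scalar $\mathcal T = Q^{ij}\langle\tau_i,\tau_j\rangle$ (essentially the torsion of the associated $G_2$-structure), not on $|\Rm|$ and $\|Q^{-1}\|$ as you state; controlling $\mathcal T$ is both weaker and more natural for this flow, and it is precisely what the paper establishes in its special case by a delicate maximum-principle argument (Theorem~\ref{T-bounded}). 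Fourth, in the paper's convergence argument the contradiction at a blow-up limit is obtained not by classifying gravitational instantons and matching cohomology, but by a geometric observation specific to the symmetry: the metrics contain geodesics of length bounded below, so any blow-up limit contains a line and splits by Cheeger--Gromoll, forcing flatness. None of these mechanisms --- the $\mathcal T$-bound, the line-splitting, or the one-dimensional reduction --- is available without symmetry, which is why your honest concluding caveat is the correct bottom line.
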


(Notice that given any hypersymplectic structure, we can act by a constant linear transformation on the $\omega_i$ to ensure that $\int \omega_i \wedge \omega_j = 2 \delta_{ij}$. The factor of 2 here is just so the hyperk\"ahler metric---if it exists!---has unit volume.)

As we will explain in \S\ref{hypersymplectic-flow-recap}, a hypersymplectic manifold automatically has $c_1=0$ (where $c_1$ is defined via any of the symplectic forms $a^i\omega_i$). The classification of symplectic 4-manifolds with $c_1=0$ is an important problem which, despite much progress, is still largely open. For example, the following question appears to be currently out of reach:

\begin{question}\label{torus-question}
Let $\omega$ be a symplectic form on $\T^4$ with $c_1=0$. Is $\omega$ the K\"ahler form of a flat metric? In other words, does there exist an $\omega$-compatible complex structure $J$ for which $(\omega,J)$ is flat?
\end{question}

In fact, it follows from Taubes's work on the Seiberg--Witten invariants of symplectic 4-manifolds, that a symplectic form on $\T^4$ automatically has $c_1=0$ \cite{Taubes1,Taubes2}. We also remark that it suffices to find an $\omega$-compatible integrable complex structure $J_0$ on $\T^4$. This is because the cohomology class $[\omega]$ then contains a K\"ahler form $\omega_0 \in [\omega]$ for which $(\omega_0,J_0)$ is flat; now, since the set of K\"ahler metrics in $[\omega]$ is connected, Moser's trick gives a diffeomorphism $\Phi$ with $\Phi^*\omega_0 = \omega$ and then $J=\Phi^*J_0$ makes $(\omega, J)$ flat.

If we suppose that the symplectic form in Question~\ref{torus-question} comes from a hypersymplectic structure on $\T^4$,  $\omega = a^i \omega_i$, then Donaldson's conjecture (together with Moser's trick) implies that $\omega$ is indeed the K\"ahler form of a flat metric  (since hyperk\"ahler metrics on $\T^4$ are necessarily flat).

We recommend the excellent survey article \cite{Tian-Jun-Li} for a thorough discussion of the general problem of classifying symplectic 4-manifolds with $c_1=0$. Donaldson's conjecture is a special case of this problem which one might realistically hope is more tractable. One reason for this hope is the \emph{hypersymplectic flow}, a geometric flow which attempts to directly carry out an isotopy from a given hypersymplectic triple to a hyperk\"ahler triple. This flow was introduced in \cite{FY}; we recall the definition in \S\ref{hypersymplectic-flow-recap} below. 

The main result of this article is that for a certain type of $\T^3$-invariant hypersymplectic structure on $\T^4$, the hypersymplectic flow proves Donaldson's conjecture: starting at such a $\T^3$-invariant hypersymplectic structure, the flow exists for all time and converges to a hyperk\"ahler triple in the limit modulo diffeomorphisms. In particular, for these symplectic forms we give a positive answer to Question~\ref{torus-question}. See Theorem~\ref{main-result} in \S\ref{introduction-main-result} below for a precise statement. The rest of the introduction sets the scene and includes a brief discussion of how Theorem~\ref{main-result} compares with other  results in the literature. 

\subsection{The hypersymplectic flow}\label{hypersymplectic-flow-recap}

We now recall the definition of the hypersymplectic flow. (The details can be found in \cite{FY}.) First we explain how a hypersymplectic structure $\underline{\omega}$ determines a Riemannian metric $g_{\underline{\omega}}$ on $X$. Write $V = \text{Span}\left\{\omega_1,\omega_2,\omega_3\right\} \subset \Lambda^2 T^*X$. The fact that $\underline{\omega}$ is hypersymplectic implies that $V$ is a rank 3 sub-bundle which is \emph{definite} under the wedge product. In other words, if we pick a nowhere-vanishing 4-form $\mu$, then the symmetric bilinear form on $V$ defined by 
\[
(\alpha, \beta) = \frac{\alpha \wedge \beta}{\mu}
\]
is either positive or negative definite. Requiring this to be \emph{positive} definite determines an orientation on $X$. It is then a standard fact that there is a unique  conformal structure on $X$ for which $V = \Lambda^+$ is the bundle of self-dual 2-forms. 

We remark in passing that this also shows why $c_1(a^i\omega_i)=0$ (for any $(a^i) \neq 0$): given a conformal 4-manifold and a self-dual symplectic 2-form $\theta$,  $c_1(\theta)=0$ if and only if the quotient bundle $ \Lambda^+/ \left\langle \theta \right\rangle$ is trivial. This is the case for us, with $\theta = a^i \omega_i$; it is trivialised by the projection of any pair of symplectic forms $b^i\omega_i$ and $c^i \omega_i$ for which $(a^i),(b^i),(c^i)$ are linearly independent in $\R^3$.

To upgrade the conformal structure to a metric, we need to single out a volume form. Given any positively-oriented nowhere-vanishing 4-form $\mu$, we obtain a $3\times 3$ matrix valued function $Q(\mu)$ defined by
\[
Q_{ij}(\mu) = \frac{\omega_i \wedge \omega_j}{2\mu}.
\]
Note that $Q$ is symmetric and positive definite. Up to a factor of $1/2$ it is the matrix of inner-products of the $\omega_i$ in the metric for which $\underline{\omega}$ is self-dual and for which $\mu$ is the volume form. Scaling $\mu$ will scale $Q(\mu)$ inversely. We single out the prefered volume form $\mu_{\underline{\omega}}$ by the requirement that $\det (Q(\mu_{\underline{\omega}})) =1$. We write $g_{\underline{\omega}}$ for the metric which makes $\underline{\omega}$ self-dual and has volume form $\mu_{\underline{\omega}}$. 

We will write $Q(\underline{\omega})$ or simply $Q$ for the matrix-valued function $Q(\mu_{\underline{\omega}}).$ An important fact is that $g_{\underline{\omega}}$ is hyperk\"ahler precisely when $Q$ is \emph{constant}. When this happens, we can apply a constant linear transformation to the $\omega_i$ to ensure that $Q_{ij} = \delta_{ij}$. Once this is done, the $\omega_i$ are a hyperk\"ahler triple, i.e.\ the K\"ahler forms associated to a quaternionic triple of complex structures which are all parallel for $g_{\underline{\omega}}$. 

We can now define the hypersymplectic flow: a time-dependent hypersymplectic triple $\underline{\omega}(t)$ is a solution of hypersymplectic flow if
\begin{equation}\label{hypersymplectic-flow}
\del_t \underline{\omega} = \diff \left( Q \,\diff^* \left( Q^{-1} \underline{\omega} \right)\right).
\end{equation}
Here, we think of $\underline{\omega}$ as a column vector of 2-forms, which is acted on by the matrix $Q^{-1}$; taking $\diff^*$ then produces a column vector of 1-forms, and so forth. It is important to note that the codifferential $\diff^*$ depends on the metric $g_{\underline{\omega}(t)}$ and so $\underline{\omega}(t)$ itself. 

Two simple remarks: if $Q$ is constant then $\underline{\omega}$ is a fixed point of the flow, since $\diff^*(Q^{-1} \underline{\omega}) = Q^{-1} \diff^*\underline{\omega} = 0$ (since $\underline{\omega}$ is closed and self-dual). Meanwhile, the right-hand side of \eqref{hypersymplectic-flow} is exact and so $[\underline{\omega}]$ is constant. These are minimum requirements to use the flow to attack Donaldson's conjecture. 

\subsection{Relationship with the $G_2$-Laplacian flow} 

The hypersymplectic flow is actually the dimensional reduction of a 7-dimensional geometric flow, called the $G_2$-Laplacian flow. We quickly recall the basic definitions, referring to \cite{Bry} for the details. In general, a 3-form on a 7-manifold $M^7$ is called a $G_2$ 3-form if the following $\Lambda^7$-valued symmetric bilinear form on $TM$ is definite:
\[
\left\langle u,v \right\rangle_\phi
=
\frac{1}{6}\iota_u \phi \wedge \iota_v \phi \wedge \phi.
\]
In other words, $\phi$ is a $G_2$ 3-form when for any nowhere-vanishing 7-form $\nu$, the symmetric 2-tensor $g_{\phi, \nu}(u,v) :=\left\langle u, v \right\rangle_\phi/\nu$ is either positive or negative definite. Asking for it to be positive orients $M$. Then we single out a distinguished positive nowhere-vanishing 7-form $\nu_\phi$ by asking that the metric $g_{\phi,\nu_\phi}$ gives $|\nu_\phi|_g = 1$, so that $\nu_\phi$ is the volume form. In this way, a $G_2$ 3-form $\phi$ completely determines an orientation and Riemannian metric on $M$, which we denote $g_\phi$. 

Up to the action of $\GL(7, \R)$ there is a unique such element of $\Lambda^3(\R^7)^*$. The stabiliser of such a 3-form is isomorphic to the exceptional Lie group $G_2$, hence the name. When $\phi$ is parallel for the Levi-Civita connection of $g_\phi$, we say that $\phi$ is torsion-free. It follows that the holonomy of $g_{\phi}$ preserves $\phi$ and so is a subgroup of $G_2 \subset \SO(7)$. One important reason to be interested in such metrics is that they are automatically Ricci flat.

Given a \emph{closed} $G_2$ 3-form $\phi$, a central question is to decide whether or not $[\phi]$ contains a torsion-free $G_2$ 3-form. It turns out that $\nabla^{g_{\phi}} \phi = 0$ is implied by the seemingly weaker conditions that $\diff \phi = 0 =\diff^*\phi$. (Note that $\diff^*$ depends on $g_\phi$ here.) With this in mind, in \cite{Bry} Bryant introduced the \emph{$G_2$-Laplacian flow}: $\del_t \phi = \Delta_{\phi} \phi$, where $\Delta_{\phi} = \diff^*\diff + \diff \diff^*$ is the Hodge Laplacian of $g_\phi$. This flow aims to deform a given closed $G_2$ 3-form into a cohomologous $G_2$ 3-form which is also coclosed and hence is torsion-free. 

Now, given a hypersymplectic structure $\underline{\omega}$ consider the 3-form $\phi$ on $\T^3 \times X$ given by
\begin{equation}
\phi = \diff t^{123} - \diff t^1 \wedge \omega_1 - \diff t^2 \wedge \omega_2 - \diff t^3 \wedge \omega_3
\label{phi-from-omega}
\end{equation}
where $t^1,t^2,t^3$ are standard coordinates on $\T^3$. The fact that $\underline{\omega}$ is hypersymplectic ensures that $\phi$ is a closed $G_2$ 3-form. The 7-dimensional and 4-dimensional metrics are related by
\[
g_\phi = Q_{ij}\diff t^i\diff t^j \oplus g_{\underline{\omega}}.
\]
(This explains the choice $\det Q = 1$ for setting the scale of the metric $g_{\underline{\omega}}$.)  In \cite{FY} it is shown that if one starts the $G_2$-Laplacian flow with a 3-form $\phi$ of the special form~\eqref{phi-from-omega}, then $\phi(t)$ has the same shape, defined by a hypersymplectic triple $\underline{\omega}(t)$ which itself evolves by the hypersymplectic flow. 

Very little is known about the general behaviour of the $G_2$-Laplacian flow. Bryant and Xu proved that there is a unique solution for small time \cite{BX}. Without symmetry assumptions, the only long-time existence result, due to Lotay and Wei, is that if $\phi$ is a torsion-free $G_2$ 3-form, and the flow is started sufficiently close to $\phi$ in $[\phi]$ then the flow will exist for all time and converge back to $\phi$ modulo diffeomorphisms \cite{LW2}. There is no known example of a finite-time singularity of the flow on a compact 7-manifold (although such examples can be found on non-compact manifolds \cite{Lau2,MOV,Nico2}).

For the hypersymplectic flow, things appear more hopeful. Firstly, note that Bryant and Xu's short-time existence result imples that for any hypersymplectic structure $\underline{\omega}$ there is a unique solution, for short time at least, to the hypersymplectic flow starting at $\underline{\omega}$. 

Secondly, as Hitchin observed~\cite{Hitchin}, the $G_2$-Laplacian flow is the gradient flow of the total volume functional. In particular the volume is increasing along the flow. When the $G_2$-structure $\phi$ comes from a hypersymplectic structure $\underline{\omega}$, there is a \emph{topological} upper-bound on the volume:
\[
\Vol(\T^3 \times X, g_\phi) = \left(2\pi\right)^3 \Vol(X,g_{\underline{\omega}}) \leq \frac{\left(2\pi\right)^3}{6} \int_X \omega_1^2 + \omega_2^2 + \omega_3^2.
\]
This is in stark contrast to the general case. There are examples, due to Mayther~\cite{Mayther}, of closed $G_2$ forms on a compact 7-manifold for which the volume is arbitrarily large. 

The main advantage the hypersymplectic flow has over the general $G_2$-Laplacian flow---and one we will exploit for this article---is the following extension criteria proved in \cite{FY}. To state it, we recall that given a closed $G_2$ form $\phi$ the \emph{torsion} is the 2-form $\mathbf{T} = -\frac{1}{2} \diff^* \phi$. For a $G_2$ structure on $\T^3 \times X$ of the form~\eqref{phi-from-omega}, the torsion has the form $\mathbf{T} =-\frac{1}{2}\diff t^i \wedge \tau_i$ for a triple of 1-forms $\tau_i$ on $X$. It follows that $2|\mathbf{T}|^2 = Q^{ij}\left\langle \tau_i,\tau_j \right\rangle$ where $Q^{ij}$ are the elements of the inverse matrix $Q^{-1}$. 

\begin{theorem}[Fine--Yao \cite{FY}]\label{FY-extension}
Let $\underline{\omega}(t)$ be a solution to the hypersymplectic flow on $X \times [0,s)$ where $X$ is a compact 4-manifold. Suppose that the scalar quantity 
\[
\cT := 2|\mathbf{T}|^2 = Q^{ij}\left\langle \tau_i,\tau_j \right\rangle
\]
is bounded on $X \times [0,s)$. Then the flow extends past $t=s$. 
\end{theorem}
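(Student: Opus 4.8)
\emph{Overall strategy.} The plan is to use the hypothesis to obtain uniform $C^\infty$ bounds on $\underline\omega(t)$, relative to a fixed background metric, over all of $X\times[0,s)$. Granted such bounds, $\underline\omega(t)$ subconverges smoothly as $t\to s$ to a smooth hypersymplectic triple, and Bryant--Xu short-time existence, applied from a time close to $s$ together with openness of the existence interval and uniqueness of the flow, then extends the solution past $t=s$. Since every object attached to $\underline\omega$ is assembled from the $\omega_i$, the matrix $Q$ and the metric $g_{\underline\omega}$, the work divides into: two-sided bounds on $Q$; a two-sided bound on the volume form $\mu_{\underline\omega}$; a uniform bound on $\Rm(g_{\underline\omega})$ and on $g_{\underline\omega}$ itself; and finally bounds on all higher derivatives. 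Set $\Lambda:=\sup_{X\times[0,s)}\cT<\infty$. It is convenient to carry out the higher-order estimates on $\T^3\times X$ through the associated $G_2$-Laplacian flow, so that Shi-type smoothing estimates (Lotay--Wei) are available; recall $g_\phi=Q_{ij}\diff t^i\diff t^j\oplus g_{\underline\omega}$, and that on that side $\cT=2|\mathbf T|^2$ is a fixed positive multiple of $-\Scal(g_\phi)$, so our hypothesis is exactly a uniform scalar-curvature bound.

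\emph{Step 1: the matrix $Q$ and the volume form.} Write the flow as $\del_t\omega_i=\diff\tau_i$ with $\tau_i:=\big(Q\,\diff^*(Q^{-1}\underline\omega)\big)_i$, so that $\cT=Q^{ij}\langle\tau_i,\tau_j\rangle$. Since $\tau_i\sim \diff Q\cdot Q^{-1}\cdot\omega$, the torsion bound gives $|\diff Q|^2\lesssim \cT\,Q^2$, involving only $\cT$ and an upper bound on $Q$. A direct computation of the evolution of $Q$ (using $\del_t\omega_i=\diff\tau_i$ and the volume evolution below) yields $\del_t Q=\Delta_{g_{\underline\omega}}Q+F$ with $F$ algebraic in $Q,Q^{-1}$ and at most quadratic in $\diff Q$, controlled by $|F|\lesssim \Lambda\,Q$. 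Applying the maximum principle to $\tr Q$ over the \emph{finite} interval $[0,s)$ and Gronwall's inequality bounds $\tr Q$; since $\det Q\equiv1$, this upgrades to $c\,\Id\le Q(t)\le C\,\Id$ with $c,C$ depending only on $\underline\omega(0)$, $\Lambda$, $s$. For the volume form, Hitchin's description of the $G_2$-Laplacian flow as the gradient flow of the volume functional gives $\del_t\mu_{\underline\omega}=c_0\,\cT\,\mu_{\underline\omega}$ with $c_0>0$, hence $\mu_{\underline\omega}(0)\le\mu_{\underline\omega}(t)\le e^{c_0\Lambda s}\mu_{\underline\omega}(0)$ pointwise. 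Together with the two-sided bound on $Q$ and $\omega_i\wedge\omega_j=2Q_{ij}\mu_{\underline\omega}$, this pins down the pairings $\omega_i\wedge\omega_j$ and the norms $|\omega_i|_{g_{\underline\omega}(t)}$ up to uniform constants.

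\emph{Step 2: curvature and metric -- the main obstacle.} The bounds so far do \emph{not} yet bound $g_{\underline\omega}(t)$ against a fixed metric -- the self-dual plane $\Lambda^+_{g_{\underline\omega}(t)}=\Span\{\omega_i(t)\}$ may tilt inside $\Lambda^2T^*X$, and nothing above controls the anti-self-dual Weyl curvature. This is precisely where "bounded torsion $\Rightarrow$ long-time existence" is open for the general $G_2$-Laplacian flow, and here one must exploit that $X$ is four-dimensional. First, the Gauss--Bonnet and signature formulae express $\int_X|\Rm(g_{\underline\omega})|^2$ in terms of $\chi(X)$, $\sigma(X)$ and curvature integrals controlled by the scalar-curvature and volume bounds of Step~1, giving a uniform bound $\int_X|\Rm(g_{\underline\omega}(t))|^2\le C$ (if necessary only for $t$ outside a set of small measure, which still suffices). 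Second, an $\varepsilon$-regularity estimate -- in the Anderson--Tian elliptic form, or its parabolic counterpart along the Laplacian flow -- together with a blow-up analysis upgrades this to a pointwise bound $|\Rm(g_{\underline\omega}(t))|\le C$: were curvature to concentrate, rescaling at points of maximal curvature would, since the torsion rescales to zero, produce a complete, non-flat, Ricci-flat (hence hyperk\"ahler) ALE-type limit with finite $L^2$-curvature, which one excludes using the non-collapsing implied by the uniform volume and diameter bounds and the fixed cohomology classes $[\omega_i]$. Once $|\Rm(g_{\underline\omega})|$ and $|\mathbf T|$ are bounded, $\del_t g_{\underline\omega}$ is bounded, so integrating in $t$ bounds $g_{\underline\omega}(t)$ two-sidedly against $g_{\underline\omega}(0)$.

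\emph{Step 3: bootstrap and conclusion.} With $|\Rm(g_\phi)|$ and $|\mathbf T|$ bounded on $[0,s)$, the Shi-type estimates for the $G_2$-Laplacian flow bound $|\nabla^k\Rm|$ and $|\nabla^k\mathbf T|$ for every $k$ on $[\delta,s)$; equivalently every $C^k$ norm of $\omega_i(t)$ with respect to a fixed metric is bounded. The conclusion follows as in the overall strategy above: smooth subconvergence at $t=s$, then Bryant--Xu short-time existence plus uniqueness extend the flow past $s$. I expect Step~2 -- converting the scalar/torsion bound into a genuine curvature bound, in particular ruling out hyperk\"ahler bubbling in four dimensions -- to be the heart of the matter; Steps~1 and~3 should be comparatively routine given the framework recalled in the excerpt.
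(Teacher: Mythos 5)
This theorem is quoted from Fine--Yao \cite{FY}; the present paper does not reprove it, so the only in-paper point of comparison is Proposition~\ref{Lambda-bounded}, which runs a parallel blow-up argument in the $\T^3$-symmetric setting. Your Steps 1 and 3 do match the architecture of \cite{FY}: the maximum principle applied to $\tr Q$ (where the quadratic gradient term has a favourable sign) together with $\det Q\equiv 1$ gives the two-sided bound on $Q$, Hitchin's volume monotonicity controls $\mu_{\underline{\omega}}$, and Lotay--Wei's Shi-type estimates plus their extension criterion handle the bootstrap and the restart. Identifying $\cT$ with $-\Scal(g_\phi)$ up to a constant is also correct and is the starting point of \cite{FY}.

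The gap is in Step 2, which you rightly identify as the heart of the matter but do not actually close. Two problems. First, Gauss--Bonnet and the signature formula do not by themselves bound $\int_X|\Rm|^2$: the term $\int|\mathring{\Ric}|^2$ enters Gauss--Bonnet with a negative sign, and for a closed $G_2$-structure the Ricci curvature involves $\nabla\mathbf{T}$, which is not pointwise controlled by the hypothesis. One needs the integral identity for closed $G_2$-structures (used in \cite{FY} and \cite{LW1}) bounding $\int|\nabla\mathbf{T}|^2$ by $\int|\mathbf{T}|^4$; ``scalar-curvature and volume bounds'' alone are not enough. Second, and more seriously, your stated mechanism for excluding a non-flat hyperk\"ahler blow-up limit --- ``non-collapsing implied by the uniform volume and diameter bounds and the fixed cohomology classes'' --- does not work. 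Non-collapsing is what you need in order to \emph{extract} the Cheeger--Gromov limit in the first place (and it does not follow from total volume plus a diameter bound, which in any case you have not established; in \cite{FY} it is proved from the symplectic forms themselves, via $|\omega_i|\le C$ and $\omega_i^2\ge c\,\mu$ giving $\Vol(B(p,r))\ge\kappa r^4$). It cannot rule the bubble out: Eguchi--Hanson is a non-flat, non-collapsed, finite-energy ALE hyperk\"ahler manifold. Excluding such a limit is exactly the hard step; in \cite{FY} it requires a genuine further argument exploiting the hypersymplectic data on $X$, and in the symmetric setting of this paper it is replaced by the observation that the limit contains a geodesic line, so Cheeger--Gromoll forces it to split and hence be flat. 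As written, your proposal asserts the conclusion of this step rather than proving it.
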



\subsection{$\T^3$-invariant hypersymplectic structures on $\T^4$ and the main result.}\label{introduction-main-result}

In this paper we study a certain class of $\T^3$-invariant hypersymplectic structures on $\T^4$. We use the standard action of $\T^3$ on $\T^4$ given by
\begin{equation}
\label{def:T3action}
\left(e^{it_1}, e^{it_2}, e^{it_3}\right)\cdot \left( e^{ix_0}, e^{ix_1}, e^{ix_2}, e^{ix_3}\right) = \left( e^{ix_0}, e^{i(t_1+ x_1)}, e^{i(t_2+x_2)}, e^{i(t_3+x_3)}\right). 
\end{equation}
Lemma~\ref{NF-01} below shows that given any $\T^3$-invariant hypersymplectic triple, we can act by a constant linear transformation on the components to put it in the form
\begin{equation}
\omega_i = \alpha_{ij}(x_0) \diff x_0 \wedge \diff x_j + \frac{1}{2}\epsilon_{ipq} \diff x_{p} \wedge \diff x_q
\label{normal-form-equation}
\end{equation}
where $a_{ij} \colon \T \to \R$ are functions of a single variable.
\begin{definition}
A $\T^3$-invariant hypersymplectic structure of the form~\eqref{normal-form-equation} is said to be in \emph{normal form}. If, moreover, $\alpha_{ij} = \alpha_{ji}$ we say the structure is in \emph{symmetric normal form}.
\end{definition}

We can now state the main result of this article.

\begin{theorem}\label{main-result}
Let $\underline{\omega}$ be a $\T^3$-invariant hypersymplectic structure on $\T^4$ which is in symmetric normal form. The hypersymplectic flow $\underline{\omega}(t)$ starting at $\underline{\omega}$ exists for all $t \in [0,\infty)$. Moreover, there exists a path of diffeomorphisms $G(t) \colon \T^4 \to \T^4$ starting at the identity, such that $G(t)^*\underline{\omega}(t)$ converges as $t\to \infty$ to the unique flat hyperk\"ahler metric in~$[\underline{\omega}]$.

In particular, Donaldson's Conjecture~\ref{Donaldson-conjecture} holds for $\underline{\omega}$ and we have a positive answer to Question~\ref{torus-question} for the symplectic forms $a^i\omega_i$ on $\T^4$. 
\end{theorem}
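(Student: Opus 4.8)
The plan is to use the $\T^3$-symmetry to reduce the hypersymplectic flow to a scalar quasilinear parabolic system for one matrix-valued function of a single space variable, and then to run the standard long-time-existence and convergence machinery on that reduced equation.

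\emph{Reduction.} Since the hypersymplectic flow has unique solutions (Bryant--Xu) and is equivariant under diffeomorphisms, the solution $\underline\omega(t)$ starting at a $\T^3$-invariant triple stays $\T^3$-invariant. A $\T^3$-invariant closed $2$-form on $\T^4$ has the shape $b_{0j}(x_0)\,\diff x_0\wedge\diff x_j+(\mathrm{const})_{pq}\,\diff x_p\wedge\diff x_q$, and since $[\underline\omega(t)]$ is fixed both the constant part and the mean of the $b_{0j}$ are independent of $t$; so the solution stays in the normal form \eqref{normal-form-equation}, with $\alpha=\alpha(x_0,t)$. A direct computation shows that for a triple in \emph{symmetric} normal form the associated metric is
\[
g_{\underline\omega}=(\det\alpha)^{2/3}\,\diff x_0^2+(\det\alpha)^{1/3}(\alpha^{-1})_{ij}\,\diff x_i\,\diff x_j
\]
with no cross terms, that $Q=(\det\alpha)^{-1/3}\alpha$, and that \eqref{hypersymplectic-flow} collapses to the divergence-form system
\[
\del_t\alpha=\del_0\!\big(\mu\,\del_0(\mu\,\alpha)\big),\qquad \mu:=(\det\alpha)^{-1/3},
\]
which is uniformly parabolic for $\alpha\colon\T^1\times[0,T)\to\mathrm{Sym}^+(3,\R)$ (positive-definiteness of $\alpha$ being the hypersymplectic condition, after fixing orientation). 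In particular the symmetric normal form is preserved, the hyperk\"ahler locus is $\{\,Q\ \text{constant in }x_0\,\}$, and the cohomological constraint reads $\int_{\T^1}\alpha(\cdot,t)\,\diff x_0=\bar\alpha$ for a fixed matrix $\bar\alpha$.

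\emph{Long-time existence.} By Theorem~\ref{FY-extension} it is enough to bound $\cT=2|\mathbf T|^2$ on each finite time interval. In the reduced picture one finds that $\cT$ is a non-negative quadratic form in $\del_0\alpha$---of the shape $(\det\alpha)^{-2/3}\,\big|Q^{-1/2}(\del_0 Q)Q^{1/2}\big|^2$---with coefficients that are smooth functions of $\alpha$ on $\mathrm{Sym}^+(3,\R)$. Since the equation is a quasilinear divergence-form parabolic system on the circle, it breaks down at finite time only if some eigenvalue of $\alpha$ tends to $0$ or $\infty$; so it suffices to confine $\alpha$ to a fixed compact subset of $\mathrm{Sym}^+(3,\R)$ and then bound $\del_0\alpha$. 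For the first, one combines the monotonicity and topological boundedness of the volume functional $\mathcal V(t)=\int_{\T^1}(\det\alpha)^{1/3}\,\diff x_0$ (which controls $\det\alpha$ in the mean) with differential inequalities for suitable scalar quantities built from $\alpha$---its eigenvalues, $\det\alpha$, energy densities---exploiting the one-dimensional base and the non-positive curvature of the target $\mathrm{Sym}^+(3,\R)$, to obtain two-sided pointwise eigenvalue bounds. The gradient bound then follows from a Bernstein-type estimate in the single space variable, and higher derivatives by parabolic bootstrapping; running these arguments with the time-independent volume bound makes the estimates uniform in $t$. Hence $\cT$ cannot blow up at finite time and the flow exists for all $t\ge0$.

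\emph{Convergence.} As $\mathcal V$ is monotone, bounded, and $\tfrac{\diff}{\diff t}\mathcal V(t)$ is a positive multiple of $\int_{\T^4}\cT\,\dvol_{g_{\underline\omega}}\ge0$, we get $\int_0^\infty\!\!\int_{\T^4}\cT<\infty$; so there is $t_n\to\infty$ with $\cT(t_n)\to0$ in $L^1$, and the uniform estimates upgrade this to $C^0$ and furnish a $C^\infty$ subsequential limit $\underline\omega_\infty$ with $\cT\equiv0$---a flat hyperk\"ahler triple ($Q_\infty$ constant in $x_0$). The constraint $\int_{\T^1}\alpha\,\diff x_0=\bar\alpha$ forces $Q_\infty=\bar\alpha/(\det\bar\alpha)^{1/3}$, so the limiting metric is the unique flat hyperk\"ahler metric in $[\underline\omega]$ up to a reparametrisation of the $x_0$-circle. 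Letting $G(t)$ be the corresponding family of reparametrisations (normalised so that $G(0)=\mathrm{id}$), and upgrading subconvergence to genuine convergence by a \L ojasiewicz--Simon inequality near the limiting critical point---or directly, using the one-dimensional structure---gives the asserted convergence of $G(t)^*\underline\omega(t)$, whence Donaldson's Conjecture~\ref{Donaldson-conjecture} and the positive answer to Question~\ref{torus-question} for the forms $a^i\omega_i$.

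\emph{Main obstacle.} The crux is the a priori estimate in the long-time step, \emph{uniformly in $t$}: ruling out degeneration of the eigenvalues of $\alpha$---geometrically, collapse or blow-up of the base circle against the $\T^3$-fibres---when the volume only controls $(\det\alpha)^{1/3}$ in an averaged sense. Promoting this to pointwise two-sided eigenvalue bounds, and then to a gradient bound, via maximum-principle and energy arguments tailored to the reduced matrix flow, is where the real work lies; identifying the limit and the diffeomorphisms $G(t)$ precisely (the \L ojasiewicz--Simon step, or its explicit one-dimensional substitute) is a secondary difficulty.
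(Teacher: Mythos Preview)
Your reduction is largely on target, but there is a fatal error in the very first analytic claim: the reduced matrix flow
\[
\partial_t\alpha=\partial_0\!\bigl(\mu\,\partial_0(\mu\,\alpha)\bigr),\qquad \mu=(\det\alpha)^{-1/3},
\]
is \emph{not} parabolic. If you linearise the right-hand side at $\alpha$, the principal symbol in the direction $\xi$ sends $\beta\in S^2\R^3$ to $\xi^2\mu^2\bigl(\beta-\tfrac13\langle\alpha,\beta\rangle_\alpha\,\alpha\bigr)$, which annihilates $\beta=\alpha$. (This is the residual gauge direction from reparametrising $x_0$.) Consequently the ``standard long-time-existence and convergence machinery'' for quasilinear parabolic systems that your proposal leans on---Bernstein gradient estimates, parabolic bootstrapping, \L ojasiewicz--Simon---does not apply out of the box, and the proposal as written has no mechanism to replace it. (There is also a slip in your metric formula: the fibre metric is $Q_{ij}=(\det\alpha)^{-1/3}\alpha_{ij}$, not $(\det\alpha)^{1/3}(\alpha^{-1})_{ij}$.)

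The paper circumvents non-parabolicity not by fixing a gauge and recovering ellipticity, but by exploiting two very specific evolution equations. First, $\partial_t Q=\Delta Q-\tfrac13\cT Q$, whence the maximum principle on $\tr Q$ (together with $\det Q\equiv 1$) gives the two-sided eigenvalue bounds you were hoping to extract from vaguer ``differential inequalities.'' Second, and this is the real crux, a long explicit computation of $(\partial_t-\Delta)\cT$ combined with a sharp algebraic inequality for traces of $3\times3$ symmetric matrices yields $\tfrac{\diff}{\diff t}\overline{\cT}\le-\tfrac13\overline{\cT}^{\,2}$ at the spatial maximum, giving both the finite-time extension and the decay $\cT\to0$ in one stroke. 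For convergence the paper again avoids analytic generalities: it passes to arclength coordinates on the base circle, then bounds the full curvature tensor by a blow-up argument whose contradiction comes from Cheeger--Gromoll splitting (the rescaled limit contains a line, hence is flat), and finally invokes the Lotay--Wei Shi-type estimates for the $G_2$-Laplacian flow. Your proposed \L ojasiewicz--Simon route is not pursued and would itself require setting up the right gauge-fixed functional framework; the paper's identification of the limit is instead purely cohomological.
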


This is a generalisation of \cite{HWY}, which proved the same convergence result for $\T^3$-invariant hypersymplectic structures of \emph{simple type}, i.e. those of the very special form where $a_{ij}= \delta_{ij} (1+\phi_i'')$ is diagonal, with entries determined by three functions $\phi_i \colon \T \to \R$  \cite{HWY}. By contrast, in Theorem~\ref{main-result} the eigendirections of $\alpha_{ij}(x_0)$ can depend on $x_0$. 

Theorem~\ref{main-result} and the main result of \cite{HWY} which it subsumes are the only convergence results currently known for the hypersymplectic flow, besides those which merely invoke the dynamic stability of Lotay--Wei \cite{LW2} by starting the flow very close to a hyperk\"ahler triple.

From a purely PDE perspective, Theorem~\ref{main-result} can be seen as a convergence result for a non-linear second-order evolution equation for a symmetric positive-definite matrix-valued function $\alpha$ of two variables $(x_0,t)$. Explicitly, the equation is: 
\[
\del_t \alpha = \left( \frac{1}{V} \left( \frac{\alpha}{V}\right)' \right)'
\]
where $V = (\det \alpha)^{1/3}$. (This equation is derived in Lemma~\ref{flow-as-a-pde}.) It turns out that this system is \emph{not} parabolic (see Remark~\ref{not-parabolic}). This is perhaps unsurprising because we began with a flow which was only parabolic modulo diffeomorphisms. It means, however, that analytic methods must be supplemented by geometric arguments in the proof.  

\subsection{Acknowledgments}

We would like thank Song Sun for some very helpful conversations. JF is supported by the ``Excellence of Science'' grant number 4000725 and the FNRS grant PDR T.0082.21.  

 \section{$\T^3$-invariant hypersymplectic structures}

\subsection{Normal form}

We begin by putting $\T^3$-invariant hypersymplectic structures into ``normal form''~\eqref{normal-form-equation} as described in the introduction. We first set out our notation and conventions. Recall we use coordinates $(x_0,x_1,x_2,x_3)$ on $\T^4$ in which the standard $\T^3$-action given in~\eqref{def:T3action} rotates $(x_1,x_2,x_3)$. We write $\diff x_{ij}$ for the 2-form $\diff x_i \wedge \diff x_j$, and similarly for higher degree forms. We assume that our coordinates $x_1,x_2,x_3$ have been ordered so that the orientation induced by the hypersymplectic structure we're working with makes $\diff x_{0123}$ a positive 4-form. We use the summation convention that repeated indices are summed over 1,2,3. 

\begin{lemma}\label{invariant-closed-2-forms}
Any $\mathbb{T}^3$-invariant closed 2-form $\theta$ on $\mathbb{T}^4$ has the form 
\[
\theta = a_{i}(x_0) \diff x_{0i} + \eta_{pq} \diff x_{pq}
\]
for functions $a_i \colon \T^1 \to \R$ and a constant skew symmetric matrix $\eta_{pq}$. 
\end{lemma}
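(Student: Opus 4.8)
The plan is to use the two hypotheses in sequence: first $\T^3$-invariance to pin down how the coefficients depend on the coordinates, then closedness to force the horizontal part to be constant.

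First I would expand $\theta$ in the global coframe $\{\diff x_{ij}\}_{0\le i<j\le 3}$ of $\Lambda^2 T^*\T^4$, writing $\theta=\sum_{i<j}\theta_{ij}\,\diff x_{ij}$ with $\theta_{ij}\in C^\infty(\T^4)$. For $g=(t_1,t_2,t_3)\in\T^3$ acting by~\eqref{def:T3action} one has $g^*\diff x_k=\diff x_k$ for every $k$, so $g^*\theta=\sum_{i<j}(\theta_{ij}\circ g)\,\diff x_{ij}$. Hence $\theta$ is $\T^3$-invariant precisely when each $\theta_{ij}$ is invariant under the translations $x_k\mapsto x_k+t_k$, $k=1,2,3$, i.e. precisely when each $\theta_{ij}$ is a function of $x_0$ alone. (Equivalently, one may Fourier-expand each $\theta_{ij}$ in $x_1,x_2,x_3$ and observe that invariance annihilates all nonzero modes.) Relabelling $\theta_{0i}=a_i(x_0)$ and collecting the remaining coefficients into a skew matrix $\eta_{pq}(x_0)$ then gives $\theta=a_i(x_0)\,\diff x_{0i}+\eta_{pq}(x_0)\,\diff x_{pq}$.

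Next I would impose $\diff\theta=0$. Since the $a_i$ and $\eta_{pq}$ depend only on $x_0$, the exterior derivative is $\diff\theta=a_i'(x_0)\,\diff x_0\wedge\diff x_{0i}+\eta_{pq}'(x_0)\,\diff x_{0pq}=\eta_{pq}'(x_0)\,\diff x_{0pq}$, the first group of terms vanishing because $\diff x_0\wedge\diff x_0=0$. As the $3$-forms $\diff x_{0pq}$ (over $1\le p<q\le 3$) are linearly independent, $\diff\theta=0$ forces $\eta_{pq}'\equiv 0$, so $\eta_{pq}$ is a constant skew symmetric matrix, which is the assertion.

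I do not expect a genuine obstacle here; the only point needing a little care is the equivalence in the first step — that invariance of $\theta$ in the translation-invariant coframe $\{\diff x_{ij}\}$ really is equivalent to invariance of each coefficient function — after which the statement reduces to the elementary fact that a $\T^3$-invariant function on $\T^4$ depends only on $x_0$.
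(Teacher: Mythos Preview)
Your proof is correct and follows exactly the same approach as the paper: first use $\T^3$-invariance to reduce the coefficient functions to functions of $x_0$ alone, then use closedness to force the $\eta_{pq}$ to be constant. The paper's proof is just a terser version of what you wrote.
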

\begin{proof}
Any 2-form on $\T^4$ has the given form where $a_i, \eta_{pq} \colon \T^4 \to \R$ are functions of all four variables. $\T^3$-invariance forces these coefficient functions to depend only on $x_0$, and now $\diff \theta=0$ forces $\eta_{pq}$ to be constant. 
\end{proof}

Now let $\underline{\omega}$ be a $\T^3$-invariant hypersymplectic structure, and $\underline{\omega}(t)$ the hypersymplectic flow starting at $\underline{\omega}$. Since the solution is unique, $\underline{\omega}(t)$ remains $\T^3$-invariant. Using Lemma~\ref{invariant-closed-2-forms}, we write
\[
\omega_i(t)=
	a_{ij}(x_0,t) \diff x_{0j} + \eta_{i,pq}(t) \diff x_{pq}.
\]
\begin{lemma}\label{eta-does-not-change}
$\eta_{i,pq}(t) = \eta_{i,pq}(0)$ is independent of time. 
\end{lemma}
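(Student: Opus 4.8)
The plan is to exploit the fact, already noted just after \eqref{hypersymplectic-flow}, that the right-hand side of the flow is exact, combined with $\T^3$-invariance. The point is that the constant skew part $\eta_{i,pq}$ is a \emph{cohomological} quantity: up to a fixed nonzero constant it is the period of $\omega_i$ over the $2$-subtorus spanned by the $x_p$ and $x_q$ directions, and periods are unchanged by adding an exact form. So the result should follow with essentially no analysis of the flow itself.

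Concretely, I would argue as follows. Since solutions of the hypersymplectic flow are unique, $\underline{\omega}(t)$ is $\T^3$-invariant for every $t$; hence the associated metric $g_{\underline{\omega}(t)}$, the matrix $Q$, and therefore the entire right-hand side $\diff(Q\,\diff^*(Q^{-1}\underline{\omega}))$ of \eqref{hypersymplectic-flow} are $\T^3$-invariant. Thus, for each fixed $t$, $\del_t\omega_i(t)$ is a $\T^3$-invariant \emph{exact} $2$-form on $\T^4$. Next I would show that any $\T^3$-invariant exact $2$-form $\theta$ on $\T^4$ has vanishing $\diff x_{pq}$-component (for $p,q\in\{1,2,3\}$): writing $\theta=\diff\beta$ and averaging $\beta$ over the $\T^3$-action — which does not change $\diff\beta=\theta$, since $\theta$ is already invariant — we may assume $\beta$ is $\T^3$-invariant, so $\beta=\sum_{\mu=0}^{3} b_\mu(x_0)\,\diff x_\mu$ with each $b_\mu$ a function of $x_0$ alone; then $\theta=\diff\beta=b_j'(x_0)\,\diff x_{0j}$, which carries no $\diff x_{pq}$-term.

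Applying this to $\theta=\del_t\omega_i(t)=\del_t a_{ij}(x_0,t)\,\diff x_{0j}+\dot{\eta}_{i,pq}(t)\,\diff x_{pq}$ forces $\dot{\eta}_{i,pq}(t)=0$ for all $t$, and hence $\eta_{i,pq}(t)=\eta_{i,pq}(0)$. I do not anticipate a real obstacle: the only point needing a little care is to observe that \emph{all} of the data feeding into the right-hand side of the flow is natural in $\underline{\omega}(t)$ and so inherits its $\T^3$-invariance; the remainder is the elementary averaging/Hodge computation above, which can equivalently be phrased as saying that the $\diff x_{pq}$-coefficients of a $\T^3$-invariant closed $2$-form on $\T^4$ depend only on its de Rham class.
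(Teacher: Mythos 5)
Your proof is correct and is essentially the paper's argument: the paper simply notes that $[\underline{\omega}(t)]$ is constant (because the right-hand side of the flow is exact) and that the $\eta_{i,pq}$ are determined by the cohomology class. You spell out the cohomological invariance of the $\diff x_{pq}$-coefficients via averaging a primitive, which the paper leaves implicit, but the underlying idea is identical.
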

\begin{proof}
This follows from the fact that $[\underline{\omega}(t)] = [a_{ij}(x_0,t)\diff x_{0j}] + \eta_{i,pq}(t)[\diff x_{pq}]$ is independent of $t$. 
\end{proof}

\begin{lemma}[Normal form]\label{NF-01}
Let $\underline\omega=\left(\omega_1, \omega_2, \omega_3\right)$ be a  $\T^3$-invariant hypersymplectic structure on $\T^4$. The exists an invertible constant matrix $A$ such that $\widetilde{\omega}_j = A_{ij} \omega_i$ satisfies
	\[
	\widetilde\omega_i = \alpha_{ij}(x_0) dx_{0j} + \frac{1}{2}\epsilon_{ipq}\diff x_{pq},
	\]
i.e., $\widetilde{\omega}$ is in normal form, 
\end{lemma}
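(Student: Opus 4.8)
The plan is to start from the general description of $\T^3$-invariant hypersymplectic structures given by Lemma~\ref{invariant-closed-2-forms}: after the $\T^3$ action is fixed, each $\omega_i = a_{ij}(x_0)\,\diff x_{0j} + \eta_{i,pq}\,\diff x_{pq}$ with $\eta_{i,pq}$ a constant skew matrix. The $3\times 3$ block $\eta = (\eta_{i,pq})$ encodes three constant 2-forms on the $(x_1,x_2,x_3)$-torus; identifying $\Lambda^2$ of a 3-dimensional space with the dual 3-dimensional space via the volume form $\diff x_{123}$, the three 2-forms $\eta_{i,\bullet}$ correspond to three vectors $v_1,v_2,v_3 \in \R^3$, i.e.\ a $3\times 3$ matrix $B$ with $\eta_{i,pq} = \epsilon_{pqk} B_{ik}$ (up to a harmless factor). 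The key structural claim, which I would prove first, is that $B$ is invertible. This follows from the hypersymplectic condition: if $B$ were singular, some non-trivial combination $a^i\eta_{i,pq}$ would vanish, and then $a^i\omega_i = a^i a_{ij}(x_0)\,\diff x_{0j}$ would be a 2-form with no $\diff x_{pq}$ component, hence degenerate (its square is zero), contradicting that $a^i\omega_i$ is symplectic. So $B \in \GL(3,\R)$.

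Next I would use the freedom to replace $\underline{\omega}$ by $\widetilde{\omega}_j = A_{ij}\omega_i$ for a constant invertible $A$. Under this substitution the block $B$ transforms as $B \mapsto A^{T} B$ (it is linear in the $\omega_i$), while the first block $a$ transforms the same way, $a \mapsto A^{T} a$. Choosing $A^{T} = B^{-1}$ (up to the aforementioned normalization factor, and possibly composing with a permutation/sign to respect the ordering convention on $x_1,x_2,x_3$ that makes $\diff x_{0123}$ positive) turns the skew part into exactly $\eta_{i,pq} = \tfrac12\epsilon_{ipq}$, i.e.\ the standard one, while renaming the transformed first block $A^T a$ as $\alpha_{ij}(x_0)$. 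This is precisely the normal form~\eqref{normal-form-equation}. One should check that the transformed triple is still hypersymplectic — but that is automatic, since a constant invertible linear change of the $\omega_i$ preserves the property that every non-trivial combination is symplectic (the combinations are just relabeled), and it also preserves $\T^3$-invariance.

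The main obstacle, such as it is, is bookkeeping around orientation and the precise constant: one must make sure the matrix $A$ can be chosen so that the induced orientation convention (that $\diff x_{0123}$ is a positive 4-form) is respected, which may require multiplying $A$ by a reflection and is the reason the statement only claims invertibility of $A$ rather than, say, positivity of a determinant. One should also note what is \emph{not} claimed here: there is no assertion that $\alpha_{ij}$ is symmetric — that is the stronger ``symmetric normal form'' condition, which is an additional hypothesis in Theorem~\ref{main-result} and is genuinely not achievable by a constant linear change in general. With these caveats, the proof is essentially the linear-algebra observation that the constant skew-symmetric part defines an invertible $3\times3$ matrix which can be normalized to the identity. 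I expect the argument to occupy only a few lines once the identification $\Lambda^2(\R^3)\cong (\R^3)^*$ is set up carefully.
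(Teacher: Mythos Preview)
Your proposal is correct and follows essentially the same route as the paper: decompose each $\omega_i$ into its $\diff x_{0j}$ part and a constant 2-form $\eta_i$ on the $\T^3$ factor, use the hypersymplectic condition to show that the $\eta_i$ are linearly independent (the paper phrases this as ``$\xi^j\eta_j\neq 0$ for $\xi\neq 0$, so the $\eta_j$ are a frame of $\Lambda^2\T^3$'', which is exactly your invertibility of $B$), and then apply the unique constant linear transformation sending this frame to the standard one $\tfrac12\epsilon_{ipq}\diff x_{pq}$. Your additional remarks on orientation bookkeeping and on why symmetry of $\alpha$ is \emph{not} claimed are accurate but go beyond what the paper's short proof addresses.
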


\begin{proof}
Write $\omega_i = dx_0\wedge \alpha_i + \eta_i$ where $\eta_i$ is a constant 2-form and $\alpha_i$ is a 1-form in $\Span\{\diff x_1, \diff x_2, \diff x_3\}$. Since $\underline{\omega}$ is hypersymplectic, given any $\xi\in \R^3\setminus \{0\}$, 
	\[
	\frac{ \xi^i \omega_i \wedge \xi^j\omega_j}{dx_{0123}}
	=
	\frac{ 2\xi^i \alpha_i\wedge \xi^j\eta_j }{dx_{123}}>0. 
	\]
In particular, $\xi^j \eta_j\neq 0$. This means that the $\eta_j$ are a frame of $\Lambda^2\T^3$. So there is a unique invertible matrix $A$ such that $A_{ij} \eta_j = \frac{1}{2}\epsilon_{ipq} \diff x_{pq}$. This matrix will transform $\underline\omega$ to be in the normal form of the statement. 
\end{proof}

\begin{lemma}\label{normal-form-preserved}
If $\underline{\omega}(t)$ is a solution to the hypersymplectic flow which starts at $\underline{\omega}(0)$ which is in normal form~\eqref{normal-form-equation}, then $\underline{\omega}(t)$ remains in normal form for all $t$. 
\end{lemma}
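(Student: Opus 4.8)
The plan is to recognise that being ``in normal form'' is the conjunction of two properties, each preserved separately by the flow: (i) $\T^3$-invariance of the triple, and (ii) the constant part of each component—the $x_0$-independent, $dx_{pq}$-valued piece—being exactly $\frac{1}{2}\epsilon_{ipq}\,dx_{pq}$. So the proof is just a matter of assembling the earlier lemmas.

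First I would record that $\underline{\omega}(t)$ remains $\T^3$-invariant. This was already noted in the paragraph preceding Lemma~\ref{eta-does-not-change}: the standard $\T^3$-action on $\T^4$ carries solutions of~\eqref{hypersymplectic-flow} to solutions, so by uniqueness of the flow (Bryant--Xu) a solution starting at a $\T^3$-invariant hypersymplectic structure stays $\T^3$-invariant for as long as it exists.

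Granting $\T^3$-invariance, Lemma~\ref{invariant-closed-2-forms} lets me write $\omega_i(t) = a_{ij}(x_0,t)\,dx_{0j} + \eta_{i,pq}(t)\,dx_{pq}$ for functions $a_{ij}\colon \T\times[0,s)\to\R$ and a time-dependent constant skew matrix $\eta_{i,pq}(t)$. By Lemma~\ref{eta-does-not-change}, $\eta_{i,pq}(t)=\eta_{i,pq}(0)$. Since $\underline{\omega}(0)$ is in normal form~\eqref{normal-form-equation}, we have $\eta_{i,pq}(0)=\frac{1}{2}\epsilon_{ipq}$, and therefore $\eta_{i,pq}(t)=\frac{1}{2}\epsilon_{ipq}$ for all $t$. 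Writing $\alpha_{ij}(x_0,t):=a_{ij}(x_0,t)$, this gives $\omega_i(t)=\alpha_{ij}(x_0,t)\,dx_{0j}+\frac{1}{2}\epsilon_{ipq}\,dx_{pq}$, i.e.\ $\underline{\omega}(t)$ is in normal form.

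There is no genuine obstacle in this argument. The only two points that need care are the propagation of $\T^3$-invariance—a routine symmetry-plus-uniqueness argument—and the fact that the skew-symmetric ``constant part'' $\eta_i$ is a purely cohomological quantity (it is the period of $\omega_i$ over the $(p,q)$-sub-torus), which is exactly what Lemmas~\ref{invariant-closed-2-forms} and~\ref{eta-does-not-change} provide.
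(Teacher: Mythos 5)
Your proof is correct and follows exactly the paper's route: the paper's own proof is the one-line observation that the claim follows from Lemma~\ref{eta-does-not-change} (with the $\T^3$-invariance of $\underline{\omega}(t)$ and the application of Lemma~\ref{invariant-closed-2-forms} already established in the preceding paragraph), and you have simply spelled out those same steps. No differences of substance.
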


\begin{proof}
This follows directly from Lemma~\ref{eta-does-not-change}.
\end{proof}

Since the condition of being in normal form is preserved under the flow, it suffices to consider triples in normal form.

\subsection{The skew-symmetric part of the coefficient matrix}

From here on we work with a hypersymplectic flow $\underline{\omega}(t)$ starting from a triple in normal form~\eqref{normal-form-equation}. 

By Lemma~\ref{normal-form-preserved}, $\underline{\omega}(t)$ is in normal form for all $t$. We split the coefficient matrix into symmetric and skew-symmetric parts, $\alpha(t) = \beta(t) + \gamma(t)$ where $\beta(t) = \beta(t)^T$ and $\gamma(t) = - \gamma(t)^T$. The main result of this subsection is that under the hypersymplectic flow, $\del_t\gamma =0$ (Proposition~\ref{symmetric-preserved} below). Along the way we also collect some useful formulae.

\begin{lemma}\label{Q-V-formulae}
Let $\underline{\omega}$ be a hypersymplectic structure in normal form~\eqref{normal-form-equation}. Write $\alpha = \beta + \gamma$ where $\beta^T =\beta$ and $\gamma^T = -\gamma$. Then $\det \beta >0$. Letting $V= (\det \beta)^{1/3}$ we have
\begin{enumerate}
\item \label{volume}
$
\mu_{\underline{\omega}} = V \diff x_{0123}
$.
\item
$Q = V^{-1} \beta$.
\item \label{Q-matrix}
The metric tensor is
\[
g_{\underline{\omega}}
	= 
		V^{-1} \left(
		\det \alpha \diff x_0^2 
		- 
		\vec{\beta}_i\cdot \vec{\gamma}\, \diff x_0\otimes \diff x_i 
		- 
	    	\vec{\gamma}\cdot \vec{\beta}_i\, \diff x_i\otimes \diff x_0 
    		+
    		\beta_{ij}\diff x_i\otimes \diff x_j
		\right) 
\]
where $\vec{\beta}_i=\left(\beta_{i1}, \beta_{i2}, \beta_{i3}\right)^T$  and $\vec{\gamma}=\left(\gamma_{23}, \gamma_{31}, \gamma_{13}\right)^T$. 
\end{enumerate}
\end{lemma}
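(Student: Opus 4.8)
The plan is to reduce items (1) and (2) to the single computation $\omega_i\wedge\omega_j = 2\beta_{ij}\,\diff x_{0123}$, and then to obtain item (3) by exhibiting an explicit candidate metric and invoking the uniqueness of $g_{\underline{\omega}}$.

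\textbf{Step 1 (wedge products).} Write $\omega_i = \diff x_0\wedge\theta_i + \tau_i$ with $\theta_i = \alpha_{ij}\,\diff x_j$ and $\tau_i = \tfrac12\epsilon_{ipq}\,\diff x_{pq}$. Since $\tau_i\wedge\tau_j$ is a $4$-form built only from $\diff x_1,\diff x_2,\diff x_3$ it vanishes, so $\omega_i\wedge\omega_j = \diff x_0\wedge(\theta_i\wedge\tau_j + \tau_i\wedge\theta_j)$. Using $\epsilon_{jpq}\epsilon_{kpq} = 2\delta_{jk}$ one finds $\theta_i\wedge\tau_j = \alpha_{ij}\,\diff x_{123}$, hence $\omega_i\wedge\omega_j = (\alpha_{ij}+\alpha_{ji})\,\diff x_{0123} = 2\beta_{ij}\,\diff x_{0123}$.

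\textbf{Step 2 (items (1) and (2)).} By our convention $\diff x_{0123}$ is positive for the orientation induced by $\underline{\omega}$, and the definiteness built into the definition of a hypersymplectic structure says that for such a positive $4$-form $\mu$ the quadratic form $\xi\mapsto (\xi^i\omega_i)\wedge(\xi^j\omega_j)/\mu$ is positive definite; by Step 1 this form equals $2\,\xi^T\beta\,\xi$ times the positive function $\diff x_{0123}/\mu$, so $\beta$ is positive definite and in particular $\det\beta>0$, so $V = (\det\beta)^{1/3}$ is well-defined. Writing an arbitrary positive volume form as $\mu = f\,\diff x_{0123}$, Step 1 gives $Q_{ij}(\mu) = \beta_{ij}/f$, so $\det Q(\mu) = \det\beta/f^3$; imposing $\det Q = 1$ forces $f = V$. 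Thus $\mu_{\underline{\omega}} = V\,\diff x_{0123}$ and $Q = V^{-1}\beta$.

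\textbf{Step 3 (item (3)).} As recalled in \S\ref{hypersymplectic-flow-recap}, $g_{\underline{\omega}}$ is the \emph{unique} metric with $\Lambda^+ = \Span\{\omega_1,\omega_2,\omega_3\}$ and volume form $\mu_{\underline{\omega}}$. Hence it suffices to check that the symmetric tensor $g$ displayed in the statement is positive definite, has each $\omega_i$ as a self-dual form, and has volume form $V\,\diff x_{0123}$; uniqueness then identifies it with $g_{\underline{\omega}}$. Writing $g = V^{-1}h$, positivity of $h$ and the fact that $h$ has determinant $(\det\beta)^2$ as a $4\times4$ matrix (so that $\det g = V^2$ and $\dvol_g = V\,\diff x_{0123}$) both follow from the elementary $3\times3$ identity expressing $\det(\beta+\gamma)$ as $\det\beta$ plus a quadratic form in the entries of $\gamma$, together with $\beta>0$ from Step 2. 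Self-duality $\ast_g\omega_i = \omega_i$ can then be verified directly by computing the Hodge star of the block-structured metric $h$ on $\diff x_{0j}$ and $\diff x_{pq}$. Alternatively, and perhaps more efficiently given the machinery already in place, one reads $g_{\underline{\omega}}$ off the seven-dimensional side: for $\phi$ as in \eqref{phi-from-omega} one has $g_\phi = Q_{ij}\,\diff t^i\diff t^j\oplus g_{\underline{\omega}}$, and substituting the normal-form $\omega_i$ into $\langle u,v\rangle_\phi = \tfrac16\,\iota_u\phi\wedge\iota_v\phi\wedge\phi$ computes $g_\phi$ algebraically; its lower $4\times4$ block is the claimed expression once $Q = V^{-1}\beta$ is used.

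\textbf{Main obstacle.} Steps 1 and 2 are routine. The real work is Step 3: the cross terms $-\vec\beta_i\cdot\vec\gamma\,\diff x_0\otimes\diff x_i$ and the entry $\det\alpha$ in the $\diff x_0^2$ slot only appear once the skew part $\gamma$ of $\alpha$ is carried through the computation, and the bookkeeping---signs in the Hodge star (or in $\iota_u\phi\wedge\iota_v\phi\wedge\phi$) and the determinant identity for $\det(\beta+\gamma)$---is where care is required.
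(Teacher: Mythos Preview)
Your Steps~1 and~2 are essentially identical to the paper's argument: the paper computes $\omega_i\wedge\omega_j = 2\beta_{ij}\,\diff x_{0123}$ in the same way and reads off $\det\beta>0$, $\mu_{\underline\omega}=V\,\diff x_{0123}$, and $Q=V^{-1}\beta$ just as you do.

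For item~(3) the paper takes a different, more direct route. Rather than checking that a candidate metric has the right self-dual bundle and volume form (your approach~(a)), or passing through the $7$-dimensional bilinear form (your approach~(b)), the paper invokes the $4$-dimensional identity
\[
g_{\underline{\omega}}(u,v)\,\mu_{\underline{\omega}}
=\frac{1}{6}\,\epsilon_{ijk}\,\iota_u\omega_i\wedge\iota_v\omega_j\wedge\omega_k
\]
from~\cite{FY} and simply evaluates both sides on coordinate vectors, obtaining $Vg_{00}=\det\alpha$, $Vg_{0i}=-\vec\beta_i\cdot\vec\gamma$, and $Vg_{ij}=\beta_{ij}$ by short explicit computations. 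Your approach~(a) is valid and has the pleasant feature that positivity and the volume form reduce to the block-determinant/Schur-complement calculation together with the identity $\det(\beta+\gamma)=\det\beta+\vec\gamma^{\,T}\beta\vec\gamma$ (this is exactly Lemma~\ref{lem:linear-algebra} in the paper, used there only later); however, you still have to carry out the self-duality check $*_g\omega_i=\omega_i$, which is the genuine bookkeeping you flag, and which the paper's method sidesteps entirely. Your approach~(b) is really the same as the paper's method in disguise, since the $4$-dimensional identity above is precisely what the $\T^3\times X$ bilinear form restricts to on the $X$-factor.
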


\begin{proof}
We compute
\begin{align*}
\omega_i \wedge \omega_j
	&=
		 \left(\alpha_{ir} \diff x_{0r} +\frac{1}{2} \epsilon_{ipq} \diff x_{pq}\right)
		 \wedge
		 \left(\alpha_{jr} \diff x_{0r} + \frac{1}{2}\epsilon_{jpq} \diff x_{pq}\right),\\
	&=
		\frac{1}{2}(\alpha_{ir} \epsilon_{jpq}+ \alpha_{jr} \epsilon_{ipq})\epsilon_{rpq} 
		\diff x_{0123},\\
	&=
		(\alpha_{ij} + \alpha_{ji}) \diff x_{0123},\\	
	&=
		2\beta_{ij} \diff x_{0123}.
\end{align*}
Since $\underline{\omega}$ is hypersymplectic, and $\diff x_{0123}$ is positive (by hypothesis on the ordering of our coordinates $x_1, x_2, x_3$), it follows that $\beta_{ij}$ is positive definite. In particular $\det \beta >0$. Now with $V = (\det \beta)^{1/3}$ we see that 
\[
\frac{\omega_i \wedge \omega_j}{2V \diff x_{0123}} = V^{-1} \beta
\]
has determinant equal to 1. Parts~\ref{volume} and~\ref{Q-matrix} follow immediately.  

To compute the matrix tensor we use the identity
\[
g_{\underline{\omega}}(u,v) \mu_{\underline{\omega}}
	=
		\frac{1}{6} \epsilon_{ijk} \iota_u \omega_i \wedge \iota_v \omega_j \wedge \omega_k.
\]
(See the second section of~\cite{FY}.) From here we have the folllowing formulae for the coefficients of $g_{\underline{\omega}} = g_{ab} \diff x_a \otimes \diff x_b$:
\begin{align*}
V g_{00}  
	&=
		\frac{1}{6} \alpha_{ir}\alpha_{js} \alpha_{kt} \epsilon_{ijk}\epsilon_{rst}\\
	&=
		\det \alpha,\\
Vg_{01}
	&=
		\frac{1}{2}\left( \alpha_{11} (\alpha_{32} - \alpha_{23}) + \alpha_{21}\alpha_{13} - \alpha_{31} \alpha_{12}\right)\\
	&=
		\beta_{11} \gamma_{32} + \beta_{12} \gamma_{13} + \beta_{13}\gamma_{21}\\
	&=
		-\vec{\beta}_1\cdot  \vec{\gamma},
\end{align*}	
with similar formulae for $Vg_{02}$ and $Vg_{03}$. This gives the $\diff x_0^2$ and $\diff  x_0 \otimes \diff x_i$ coefficients. Finally, one checks that $V g_{ij} = \frac{1}{2}( \alpha_{ij} + \alpha_{ji}) = \beta_{ij}$.
\end{proof}
It turns out to be more convenient to change from the ``standard'' coframe $\diff x_0, \diff x_1, \diff x_2, \diff x_3$ to one adapted to the hypersymplectic structure. 

\begin{definition}
Let $\underline{\omega}$ be a $\T^3$-invariant hypersymplectic structure in normal form~\eqref{normal-form-equation}. The coframe of $T^*\T^4$ associated to $\underline{\omega}$ is 
    \[
    \vartheta_0 = dx_0, \;
    \vartheta_1 = dx_1 -\gamma_{23}dx_0,\; 
    \vartheta_2 = dx_2 -\gamma_{31} dx_0,\;
    \vartheta_3 = dx_3 - \gamma_{12} dx_0,
    \]
    where $\gamma$ is the skew part of the coefficient matrix $\alpha$. 
\end{definition}

\begin{lemma}\label{lemma-metric-in-coframe}
With respect to the coframe $\vartheta_a$ associated to $\underline{\omega}$, the metric tensor is
\begin{equation}
g_{\underline{\omega}} = V^2 \vartheta_0^2 + Q_{ij} \vartheta_i \otimes \vartheta_j.
\label{metric-in-coframe}
\end{equation}
\end{lemma}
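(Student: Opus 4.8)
The plan is to plug the formula of Lemma~\ref{Q-V-formulae} for $g_{\underline{\omega}}$ into the new coframe and watch the off-diagonal terms disappear. Concretely, I would first identify the frame $e_a$ dual to $\vartheta_a$. Writing $\vec{\gamma}=(\gamma_{23},\gamma_{31},\gamma_{12})^T$ for the axial vector of the skew part $\gamma$ (so $\vartheta_i = \diff x_i - \gamma_i\diff x_0$ with $\gamma_1=\gamma_{23}$, etc.), one checks immediately from $\vartheta_b(e_a)=\delta_{ab}$ that
\[
e_0 = \del_{x_0} + \gamma_i\,\del_{x_i}, \qquad e_i = \del_{x_i}\quad(i=1,2,3).
\]
It then suffices to evaluate $g_{\underline{\omega}}$ on the pairs $(e_a,e_b)$, using $\diff x_0(e_i)=0$, $\diff x_j(e_i)=\delta_{ij}$, $\diff x_0(e_0)=1$ and $\diff x_j(e_0)=\gamma_j$ together with the expression for $g_{\underline{\omega}}$ in Lemma~\ref{Q-V-formulae}.

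Carrying this out, the $\beta_{ij}\diff x_i\otimes\diff x_j$ block immediately gives $g_{\underline{\omega}}(e_i,e_j)=V^{-1}\beta_{ij}=Q_{ij}$. For the mixed term, the contribution of $-\vec{\beta}_j\cdot\vec{\gamma}\,\diff x_0\otimes\diff x_j$ to $g_{\underline{\omega}}(e_0,e_i)$ is $-\vec{\beta}_i\cdot\vec{\gamma}$, while the contribution of $\beta_{jk}\diff x_j\otimes\diff x_k$ is $\beta_{ji}\gamma_j=\vec{\beta}_i\cdot\vec{\gamma}$, so $g_{\underline{\omega}}(e_0,e_i)=0$. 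Finally
\[
g_{\underline{\omega}}(e_0,e_0) = V^{-1}\bigl(\det\alpha - 2\,\vec{\gamma}^T\beta\,\vec{\gamma} + \vec{\gamma}^T\beta\,\vec{\gamma}\bigr) = V^{-1}\bigl(\det\alpha - \vec{\gamma}^T\beta\,\vec{\gamma}\bigr).
\]
So \eqref{metric-in-coframe} will follow once we know $\det\alpha - \vec{\gamma}^T\beta\,\vec{\gamma} = \det\beta = V^3$, since then $g_{\underline{\omega}}(e_0,e_0)=V^{-1}\cdot V^3 = V^2$.

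The only non-bookkeeping point is therefore the linear-algebra identity $\det(\beta+\gamma) = \det\beta + \vec{\gamma}^T\beta\,\vec{\gamma}$ for a symmetric $3\times 3$ matrix $\beta$ and a skew $3\times 3$ matrix $\gamma$ with axial vector $\vec{\gamma}$. I would prove this by expanding $\det(\beta+\gamma)$ in powers of $\gamma$: the degree-zero term is $\det\beta$; the degree-three term is $\det\gamma=0$ since an odd-sized skew matrix is singular; the linear term is $\tr\bigl(\operatorname{adj}(\beta)\,\gamma\bigr)$, which vanishes because $\operatorname{adj}(\beta)$ is symmetric and $\gamma$ is skew; and the quadratic term equals $\vec{\gamma}^T\beta\,\vec{\gamma}$ by a short computation with $\epsilon$-symbols, writing $\gamma_{ir}=\epsilon_{ira}\gamma_a$ and using $\epsilon_{ijk}\epsilon_{ira}=\delta_{jr}\delta_{ka}-\delta_{ja}\delta_{kr}$ (the unwanted piece drops out by symmetry of $\beta$).

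I do not expect a genuine obstacle here: the computation is elementary and purely local. The one place to be careful is to keep the sign and index conventions for the axial vector $\vec{\gamma}$ consistent between the definition of the coframe $\vartheta_i$ and the expression for $g_{\underline{\omega}}$ in Lemma~\ref{Q-V-formulae}; with the convention $\vec{\gamma}=(\gamma_{23},\gamma_{31},\gamma_{12})^T$ used throughout, the cross terms cancel exactly as above and the $\vartheta_0^2$ coefficient collapses to $V^2$.
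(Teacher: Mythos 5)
Your proof is correct and follows essentially the same route as the paper: compute the frame dual to $\vartheta_a$, evaluate the expression for $g_{\underline{\omega}}$ from Lemma~\ref{Q-V-formulae} on it to see the cross terms cancel and the $\vartheta_0^2$ coefficient become $V^{-1}(\det\alpha-\vec{\gamma}^T\beta\vec{\gamma})$, then invoke the identity $\det\alpha=\det\beta+\vec{\gamma}^T\beta\vec{\gamma}$. The only (harmless) divergence is in how that identity is established: the paper diagonalises $\beta$ by an element of $\SO(3)$ and uses equivariance of the hat map $\mathfrak{so}(3)\to\R^3$, whereas you expand $\det(\beta+\gamma)$ in powers of $\gamma$ and observe that the odd-degree terms vanish by (skew-)symmetry — both arguments are elementary and equally valid.
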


\begin{proof}
The dual framing of $T\T^4$ is
\[
	\frac{\partial}{\partial x_0} 
   	+ 
	\gamma_{23}\frac{\partial}{\partial x_1} 
	+ 
	\gamma_{31}\frac{\partial}{\partial x_2} 
	+ 
	\gamma_{12}\frac{\partial}{\partial x_3}, \ 
	\frac{\partial}{\partial x_1},\ 
	\frac{\partial}{\partial x_2},\ 
	\frac{\partial}{\partial x_3}.
\]
From here one checks that
\[
g_{\underline{\omega}}
	=
		V^{-1} \left(\det \alpha - \vec{\gamma}^T \beta \vec{\gamma}\right)
		+
		Q_{ij} \vartheta_i \otimes \vartheta_j.
\]
(Recall that $Q_{ij} = V^{-1} \beta_{ij}$.) The result now follows from the following lemma about $3\times 3$ determinants.
\end{proof}

\begin{lemma}
\label{lem:linear-algebra}
Let $\alpha$ be a $3\times 3$ matrix, $\beta$ and $\gamma$ be its symmetric and skew-symmetric parts, i.e. $\beta=\frac{1}{2}(\alpha+\alpha^T)$ and $\gamma=\frac{1}{2}(\alpha-\alpha^T)$. Then 
\[
\det\alpha = \det\beta + \vec{\gamma}^T\beta \vec{\gamma}. 
\]
\end{lemma}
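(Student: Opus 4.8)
The plan is to prove the identity $\det\alpha = \det\beta + \vec{\gamma}^T\beta\vec{\gamma}$ for a $3\times 3$ matrix $\alpha = \beta + \gamma$ by a direct computation, exploiting the special structure of $3\times 3$ skew matrices. The key observation is that a skew-symmetric $3\times 3$ matrix $\gamma$ acts on $\R^3$ as $\gamma v = \vec\gamma \times v$ for the axial vector $\vec\gamma = (\gamma_{23},\gamma_{31},\gamma_{12})^T$ (up to the sign conventions fixed in the paper), and in particular $\gamma$ has rank at most $2$ with $\vec\gamma$ spanning its kernel: $\gamma\vec\gamma = 0$.

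First I would expand $\det(\beta+\gamma)$ multilinearly in the columns, or equivalently use the polarization formula
\[
\det(\beta + \gamma) = \det\beta + \det\gamma + \sum (\text{mixed terms}),
\]
where the mixed terms are the sums of determinants of matrices whose columns are chosen from $\beta$ and $\gamma$ with at least one column of each type. Since $\det\gamma = 0$ for $3\times 3$ skew matrices, that term drops out. The mixed terms naturally group as those linear in $\gamma$ (two $\beta$-columns, one $\gamma$-column) and those quadratic in $\gamma$ (one $\beta$-column, two $\gamma$-columns). I expect the terms linear in $\gamma$ to vanish by a symmetry/antisymmetry argument: each is a trace-like contraction of $\gamma$ against symmetric data built from $\beta$, hence zero. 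The genuine content is that the terms quadratic in $\gamma$ sum to exactly $\vec\gamma^T\beta\vec\gamma$.

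To handle the quadratic-in-$\gamma$ terms cleanly, I would use the adjugate: for any matrices, $\det(\beta+\gamma)$ expanded to second order in $\gamma$ contributes $\tfrac12\sum_{i,j}\gamma_{ij}\gamma_{kl}(\cdots)$, but it is more transparent to use the identity $\det(B + C) = \det B\cdot\det(I + B^{-1}C)$ when $\beta$ is invertible (the general case follows by continuity/density since both sides are polynomials in the entries). Then $\det(I + \beta^{-1}\gamma) = 1 + \tr(\beta^{-1}\gamma) + \tfrac12\big((\tr\beta^{-1}\gamma)^2 - \tr((\beta^{-1}\gamma)^2)\big) + \det(\beta^{-1}\gamma)$. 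Now $\det(\beta^{-1}\gamma) = \det\beta^{-1}\det\gamma = 0$; $\tr(\beta^{-1}\gamma) = 0$ because $\beta^{-1}$ is symmetric and $\gamma$ skew; so $\det\alpha = \det\beta\cdot\big(1 - \tfrac12\tr((\beta^{-1}\gamma)^2)\big)$. It then remains to check the purely algebraic identity $-\tfrac12\det\beta\cdot\tr\big((\beta^{-1}\gamma)^2\big) = \vec\gamma^T\beta\vec\gamma$, which I would verify using $\gamma v = \vec\gamma\times v$: writing $\gamma = \star\vec\gamma$ one computes $\gamma^2 = \vec\gamma\,\vec\gamma^T - |\vec\gamma|^2 I$, hence $\tr((\beta^{-1}\gamma)^2) = \tr(\beta^{-1}\gamma\beta^{-1}\gamma)$, and using the classical cofactor identity $\det\beta\cdot\beta^{-1} = \mathrm{adj}(\beta)$ together with $(\star\vec\gamma)\,M\,(\star\vec\gamma) = \langle \mathrm{adj}(M)\vec\gamma,\cdot\rangle$-type formulae for the cross product reduces everything to $\vec\gamma^T\beta\vec\gamma$.

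The main obstacle is the final algebraic step: matching $-\tfrac12\det\beta\cdot\tr((\beta^{-1}\gamma)^2)$ with $\vec\gamma^T\beta\vec\gamma$. This is where the $3$-dimensional accident (skew matrices $\leftrightarrow$ vectors via the cross product, rank $\le 2$, $\det\gamma = 0$) is essential, and one must be careful with the sign conventions relating $\gamma_{pq}$ to the components of $\vec\gamma$ as fixed in Lemma~\ref{Q-V-formulae}. A clean alternative, avoiding invertibility issues entirely, is to observe that both sides are polynomials in the six entries $\beta_{ij}$ and three entries $\gamma_{pq}$, and simply expand both sides directly: $\det\alpha$ has $3! = 6$ terms in the Leibniz formula, each $\alpha_{i\sigma(i)} = \beta_{i\sigma(i)} + \gamma_{i\sigma(i)}$, and collecting by degree in $\gamma$ and comparing with the explicit expansion of $\det\beta + \vec\gamma^T\beta\vec\gamma$ finishes the proof. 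I would likely present this second, brute-force route since it is self-contained and conventions are easy to track, relegating the conceptual $\det(I+\beta^{-1}\gamma)$ argument to a remark.
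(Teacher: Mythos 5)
Your proposal is correct, and it takes a genuinely different (though equally elementary) route from the paper. The paper first conjugates by $P\in\SO(3)$ to diagonalise $\beta$, using the equivariance $\vec{\tilde\gamma}=P\vec\gamma$ of the isomorphism $\mathfrak{so}(3)\cong\R^3$, and then computes the $3\times3$ determinant of a diagonal-plus-skew matrix by hand. You avoid diagonalisation entirely: your preferred route is the raw multilinear/Leibniz expansion of $\det(\beta+\gamma)$, where the degree-one and degree-three terms in $\gamma$ vanish (being $\tr(\mathrm{adj}(\beta)\gamma)$ with $\mathrm{adj}(\beta)$ symmetric, and $\det\gamma=0$ in odd dimension) and the degree-two terms collect into $\vec\gamma^{\,T}\beta\vec\gamma$; your alternative route via $\det\alpha=\det\beta\cdot\det(I+\beta^{-1}\gamma)$ and the coefficients of the characteristic polynomial is also sound, with invertibility of $\beta$ removable by density (and irrelevant in the paper's application, where $\beta>0$). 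The key identity your second route reduces to, $-\tfrac12\det\beta\,\tr\bigl((\beta^{-1}\gamma)^2\bigr)=\vec\gamma^{\,T}\beta\vec\gamma$, is correct; the cleanest verification is to check it for diagonal $\beta$ and observe both sides are invariant under simultaneous $\SO(3)$-conjugation, which is essentially the paper's argument in disguise. The trade-off: the paper's diagonalisation buys a three-line final computation at the price of the equivariance lemma, while your direct expansion is convention-proof but has more terms to collect. One detail worth flagging: the vector $\vec\gamma$ is printed in Lemma~\ref{Q-V-formulae} as $(\gamma_{23},\gamma_{31},\gamma_{13})^T$, evidently a typo for $(\gamma_{23},\gamma_{31},\gamma_{12})^T$, which is the convention used in the paper's proof of Lemma~\ref{lem:linear-algebra} and the one your computation correctly adopts.
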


\begin{proof}
Choose $P\in \SO(3)$ such that $P\beta P^T=\text{diag} (\lambda_1, \lambda_2, \lambda_3)$. Then the skew matrix $\tilde\gamma:= P\gamma P^T$ has associated vector $\vec{\tilde{\gamma}} = (\tilde{\gamma}_{23}, \tilde{\gamma}_{31}, \tilde{\gamma}_{12})^T$ which satisfies $\vec{\tilde{\gamma}} = P \vec{\gamma}$. One can either compute this directly, or note that it is a restatement of the fact that the following diagram 
\[
\begin{tikzcd}
\mathfrak{so}(3) \arrow{r}{\Phi}\arrow{d}{\ad_P}&\R^3\arrow{d}{P}\\
\mathfrak{so}(3) \arrow{r}{\Phi} & \R^3
\end{tikzcd}
\]
commutes for any $P\in \SO(3)$, i.e. the natural isomorphism $\Phi: \mathfrak{so}(3)\to \R^3$ is equivariant with respect to the $\SO(3)$ actions. 

As a consequence, 
\begin{align*}
\det\alpha
& =\det\left(P\alpha P^T\right) = \det\left(P\beta P^T+P\gamma P^T\right)
=
\left|
\begin{array}{ccc}
\lambda_1 & \tilde \gamma_{12} & \tilde \gamma_{13}\\
\tilde\gamma_{21} & \lambda_2 & \tilde \gamma_{23}\\
\tilde\gamma_{31} & \tilde\gamma_{32} & \lambda_3
\end{array}
\right|\\
& = 
\lambda_1\lambda_2\lambda_3 + \lambda_1\tilde\gamma_{23}^2 + \lambda_2\tilde\gamma_{31}^2 + \lambda_3\tilde\gamma_{12}^2\\
& = 
\det\beta + \vec{\tilde\gamma}^T \text{diag}\left(\lambda_1,\lambda_2,\lambda_3\right) \vec{\tilde\gamma}\\
& = \det\beta + \vec{\gamma}^T\beta \vec{\gamma}.
\qedhere
\end{align*}
\end{proof}

\begin{lemma}\label{some-coframe-formulae}
Using the coframe $\vartheta_a$ associated to $\underline{\omega}$, we have the following formulae:
\begin{enumerate}
\item \label{omega-in-coframe}
$ \omega_i = \beta_{ip} \vartheta_0 \wedge \vartheta_p + \frac{1}{2}\epsilon_{ijk} \vartheta _j \wedge \vartheta_k$.
\item 
$*\vartheta_0= V^{-1}\vartheta_1\wedge\vartheta_2\wedge\vartheta_3$.
\item  
$ *\vartheta_p = -VQ^{pi}\vartheta_0\wedge\vartheta_j\wedge\vartheta_k,\quad (ijk)=(123)$.
 \item \label{3-form-Hodge-star}
 $*\left(\vartheta_0\wedge\vartheta_j\wedge\vartheta_k\right) = V^{-1}Q_{ip}\vartheta_p, \quad (ijk)=(123)$.
\item\label{torsion-forms-in-coframe}
The torsion $1$-forms $\tau = Q\diff^*(Q^{-1}\underline{\omega})$ are 
    \[
    \tau_i = V^{-1}Q_{ij}'\vartheta_j,\;\; i=1,2,3,
    \]
    where the prime denotes derivative with respect to $x_0$.
\end{enumerate}
\end{lemma}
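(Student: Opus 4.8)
The plan is to establish the five identities by direct computation, in the stated order, each being fed into the next; all of them reduce to linear algebra once one invokes the descriptions of $g_{\underline{\omega}}$ and $\mu_{\underline{\omega}}$ obtained above.

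For part \eqref{omega-in-coframe} I would substitute the change of coframe into the normal form $\omega_i = \alpha_{ip}\diff x_{0p} + \tfrac12\epsilon_{ipq}\diff x_{pq}$. Writing $(\vec\gamma)_p$ for the components of the skew part, one has $\diff x_p = \vartheta_p + (\vec\gamma)_p\vartheta_0$, hence $\diff x_{0p} = \vartheta_0\wedge\vartheta_p$ and $\diff x_{pq} = \vartheta_p\wedge\vartheta_q + (\vec\gamma)_p\,\vartheta_0\wedge\vartheta_q - (\vec\gamma)_q\,\vartheta_0\wedge\vartheta_p$. Contracting the last expression with $\tfrac12\epsilon_{ipq}$ produces $\tfrac12\epsilon_{ijk}\vartheta_j\wedge\vartheta_k$ together with an extra term $\epsilon_{ipq}(\vec\gamma)_p\,\vartheta_0\wedge\vartheta_q$; since $\gamma_{iq} = -\epsilon_{ipq}(\vec\gamma)_p$ --- precisely the identification $\mathfrak{so}(3)\cong\R^3$ used in Lemma~\ref{lem:linear-algebra} --- this extra term cancels the skew part $\gamma_{ip}\vartheta_0\wedge\vartheta_p$ of $\alpha_{ip}\diff x_{0p}$, leaving $\omega_i = \beta_{ip}\vartheta_0\wedge\vartheta_p + \tfrac12\epsilon_{ijk}\vartheta_j\wedge\vartheta_k$. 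The only point requiring attention here is the $\epsilon$-index bookkeeping in this cancellation.

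For the two formulae for ${*}\vartheta_0$ and ${*}\vartheta_p$, the key observation is that by Lemma~\ref{lemma-metric-in-coframe} the coframe $\{\vartheta_a\}$ is block-orthogonal, with $\vartheta_0$ of length $V$ orthogonal to $\Span\{\vartheta_1,\vartheta_2,\vartheta_3\}$, on which the Gram matrix is $(Q_{ij})$. Since the change of coframe leaves $\diff x_{0123}$ unchanged, the volume form is $\mu_{\underline{\omega}} = V\,\vartheta_0\wedge\vartheta_1\wedge\vartheta_2\wedge\vartheta_3$ by Lemma~\ref{Q-V-formulae}\eqref{volume}, consistently with the metric \eqref{metric-in-coframe} precisely because $\det Q = 1$. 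The formulae for ${*}\vartheta_0$, ${*}\vartheta_p$ and part \eqref{3-form-Hodge-star} then follow from the standard expression for the Hodge star in an orthogonal coframe; in the $\{1,2,3\}$-block the computation is $3\times 3$ linear algebra, the one non-obvious input being that the cross product of two rows of $Q^{-1}$ equals the remaining column of $Q$ (again using $\det Q = 1$). Equivalently, once ${*}\vartheta_p$ is known, part \eqref{3-form-Hodge-star} is obtained by applying ${*}$ once more and using that ${*}$ squares to $-\operatorname{id}$ on $1$-forms, hence on $3$-forms, in dimension $4$. Here the only pitfall is keeping signs and the orientation convention straight.

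For part \eqref{torsion-forms-in-coframe} I would use that each $\omega_k$ is self-dual and closed, so ${*}\omega_k = \omega_k$ and $\diff^*\omega_k = 0$. Writing $(Q^{-1}\underline{\omega})_j = Q^{jk}\omega_k$ with $Q^{jk} = Q^{jk}(x_0)$, the Leibniz rule for $\diff^* = -{*}\diff{*}$ on $2$-forms gives $\diff^*(Q^{jk}\omega_k) = -{*}(\diff Q^{jk}\wedge{*}\omega_k) = -(Q^{jk})'\,{*}(\vartheta_0\wedge\omega_k)$, using ${*}\omega_k = \omega_k$ and $\diff Q^{jk} = (Q^{jk})'\vartheta_0$. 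By part \eqref{omega-in-coframe}, $\vartheta_0\wedge\omega_k = \vartheta_0\wedge\vartheta_m\wedge\vartheta_n$ with $(k,m,n) = (1,2,3)$ cyclically, so part \eqref{3-form-Hodge-star} turns the right-hand side into $-V^{-1}(Q^{jk})'Q_{kp}\vartheta_p$. Finally $\tau_i = Q_{ij}\diff^*(Q^{jk}\omega_k)$, and differentiating $Q_{ij}Q^{jk} = \delta_i^k$ gives $Q_{ij}(Q^{jk})' = -Q'_{ij}Q^{jk}$; substituting collapses everything to $\tau_i = V^{-1}Q'_{ip}\vartheta_p$, as claimed.

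I do not expect any genuine obstacle: the computations are routine. The two places demanding care are the $\epsilon$-contraction in part \eqref{omega-in-coframe} and the sign and orientation conventions in the Hodge-star computations.
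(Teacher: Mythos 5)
Your proposal is correct and follows essentially the same route as the paper: part (1) by direct substitution of the coframe change, parts (2)--(4) from the block-orthogonal form of the metric in Lemma~\ref{lemma-metric-in-coframe} together with $\det Q=1$, and part (5) by exactly the paper's computation (closedness and self-duality of $\omega_p$, part~\eqref{omega-in-coframe} to identify $\vartheta_0\wedge\omega_p$ as a decomposable $3$-form, part~\eqref{3-form-Hodge-star} for its Hodge star, and differentiating $QQ^{-1}=\mathrm{Id}$). The index bookkeeping you flag, in particular $\epsilon_{ipq}(\vec\gamma)_p=-\gamma_{iq}$ and the sign $**=-\operatorname{id}$ on $1$- and $3$-forms in dimension $4$, all checks out.
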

\begin{proof}
The formula for $\omega_i$ in terms of the coframe $\vartheta_a$ is a direct calculation. The formulae for the various Hodge stars follow from the expression~\eqref{metric-in-coframe} for $g_{\underline{\omega}}$. To obtain the formula for the $\tau_i$, note that
\begin{align*}
\tau_i 
	&=
		- Q_{ij} * \diff (Q^{jp} \omega_p)\\
	&=
		Q^{kp} Q'_{ik} * (\diff x_0 \wedge \omega_p)\\
	&=
		\frac{1}{2}Q^{kp} Q'_{ik} \epsilon_{pqr} * (\vartheta_0 \wedge \vartheta_q \wedge \vartheta_r)\\
	&=
		\frac{1}{2}V^{-1}  \epsilon_{pqr}\epsilon_{qrs} Q^{kp}Q'_{ik}Q_{st} \vartheta_t\\
	&=
		V^{-1} Q'_{ik} \vartheta_k,
\end{align*}
where in the second line we have used that $\diff \omega_p=0$, in the third line we've used the formula~\eqref{omega-in-coframe} for $\omega_p$, in the fourth line we've used~\eqref{3-form-Hodge-star} and in the final line we've used the identity $\epsilon_{pqr}\epsilon_{sqr} = 2 \delta_{sp}$.
\end{proof}

\begin{proposition}\label{symmetric-preserved}
Let $\underline{\omega}$ be a $\T^3$-invariant hypersymplectic structure on $\T^4$, in normal form~\eqref{normal-form-equation} with coefficient matrix $\alpha$. Under the hypersymplectic flow, the skew-symmetric part of $\alpha$ is independent of time. In particular, if $\alpha$ is symmetric at $t=0$, then this is true along the flow, for as long as it exists. 
\end{proposition}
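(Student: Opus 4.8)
The plan is to reduce the statement to a direct computation of the right-hand side of the flow equation~\eqref{hypersymplectic-flow} in normal form, and then to observe that what comes out is manifestly symmetric in its two indices. By Lemma~\ref{normal-form-preserved} the flow stays in normal form~\eqref{normal-form-equation}, so along the flow $\omega_i(t) = \alpha_{ij}(x_0,t)\,\diff x_{0j} + \frac{1}{2}\epsilon_{ipq}\,\diff x_{pq}$; since the second term is independent of $t$ we get $\del_t\omega_i = (\del_t\alpha_{ij})\,\diff x_{0j}$. On the other hand, the right-hand side of~\eqref{hypersymplectic-flow} is $\diff\tau_i$, where $\tau = Q\,\diff^*(Q^{-1}\underline{\omega})$ is the triple of torsion $1$-forms computed in part~\ref{torsion-forms-in-coframe} of Lemma~\ref{some-coframe-formulae}, namely $\tau_i = V^{-1}Q'_{ij}\vartheta_j$. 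So the whole argument reduces to computing $\diff\tau_i$ and reading off the coefficient of $\diff x_{0j}$.

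For this, write $\vartheta_j = \diff x_j - c_j(x_0)\,\diff x_0$ with $c_1 = \gamma_{23}$, $c_2 = \gamma_{31}$, $c_3 = \gamma_{12}$, so that $\tau_i = (V^{-1}Q'_{ij})\,\diff x_j - (V^{-1}Q'_{ij}c_j)\,\diff x_0$. At a fixed time every coefficient appearing here is a function of $x_0$ alone, so exterior differentiation produces only terms proportional to $\diff x_0 \wedge (\,\cdot\,)$; in particular the differential of the $\diff x_0$-term vanishes because $\diff x_0 \wedge \diff x_0 = 0$, and likewise the contribution of $c_j$ to the differential of the first term vanishes. What survives is
\[
\diff\tau_i = \bigl(V^{-1}Q'_{ij}\bigr)'\,\diff x_0 \wedge \diff x_j = \bigl(V^{-1}Q'_{ij}\bigr)'\,\diff x_{0j},
\]
where the prime denotes $\del/\del x_0$. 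Comparing with $\del_t\omega_i = (\del_t\alpha_{ij})\,\diff x_{0j}$ yields the evolution equation $\del_t\alpha_{ij} = \bigl(V^{-1}Q'_{ij}\bigr)'$.

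To conclude, recall from Lemma~\ref{Q-V-formulae} that $Q = V^{-1}\beta$ with $\beta = \frac{1}{2}(\alpha + \alpha^T)$ symmetric; hence $Q$ is symmetric, and therefore so are $Q'$, $V^{-1}Q'$, and its $x_0$-derivative. Thus $\del_t\alpha$ is a symmetric matrix, and since $\del_t\gamma$ is skew (being the $t$-derivative of a skew matrix) it must vanish identically; in particular a symmetric $\alpha(0)$ stays symmetric for as long as the flow exists. There is no serious obstacle here: the only point needing a little care is that the adapted coframe $\vartheta_j$ itself depends on $x_0$ through $\gamma$, but—as noted above—this $x_0$-dependence drops out of $\diff\tau_i$, so it does not interfere. (This computation is moreover the first half of the derivation of the scalar PDE for the symmetric matrix $\alpha$ used later, once one knows $\alpha = \beta$.)
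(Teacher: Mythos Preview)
Your proof is correct and follows the same overall approach as the paper: compute both sides of $\del_t\omega_i=\diff\tau_i$ in coordinates, arrive at $\del_t\alpha_{ij}=(V^{-1}Q'_{ij})'$, and conclude from the symmetry of $Q$ that $\del_t\gamma=0$. The only difference is cosmetic: the paper expresses $\omega_i$ in the time-dependent coframe $\vartheta_a(t)$ and tracks $\del_t\vartheta_j$ explicitly, whereas you stay in the fixed basis $\diff x_a$ for the left-hand side (where $\del_t\omega_i=(\del_t\alpha_{ij})\,\diff x_{0j}$ is immediate) and only unpack $\vartheta_j$ when taking $\diff\tau_i$---a slightly more direct route to the same equation.
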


\begin{proof}
By Lemma~\ref{normal-form-preserved} we know that $\underline{\omega}(t)$ is in normal form for all $t$. We write its coefficient matrix $\alpha(t) = \beta(t) + \gamma(t)$ where $\beta(t) = \frac{1}{2} (\alpha(t) + \alpha(t)^T)$ and $\gamma(t) = \frac{1}{2}(\alpha(t) - \alpha(t)^T)$. We denote $\vartheta_i(t) = \diff x_i - \frac{1}{2} \epsilon_{ijk} \gamma_{jk}(t) \diff x_0$ for the coframe associated to $\underline{\omega}(t)$. Note that  $\vartheta_0= \diff x_0$ is automatically independent of $t$, whilst
\[
\del_t \vartheta_i = - \frac{1}{2} \epsilon_{ijk} (\del_t \gamma_{jk}) \vartheta_0.
\]

By part~\ref{omega-in-coframe} of Lemma~\ref{some-coframe-formulae}, we have
\[
\omega_i(t)
	=
		\beta_{ip}(t) \vartheta_0 \wedge \vartheta_p(t) 
		+ 
		\frac{1}{2}\epsilon_{ijk} \vartheta_j(t) \wedge \vartheta_k(t).
\]
So
\begin{align*}
\del_t \omega_i
	&=
		\del_t \beta_{ip} \vartheta_0 \wedge \vartheta_p
		-
		\frac{1}{4} \epsilon_{ijk} (
		\epsilon_{jrs} \del_t \gamma_{rs} \vartheta_0 \wedge \vartheta_k
		+
		\epsilon_{krs}\del_t \gamma_{rs} \vartheta_j \wedge \vartheta_0)\\
	&=
		\left(
		\del_t \beta_{ip}
		-
		\frac{1}{4} (
			\epsilon_{ijp} \epsilon_{jrs} 
			-
			\epsilon_{ipk} \epsilon_{krs}) 
			\del_t \gamma_{rs}
		\right)
		\vartheta_0 \wedge \vartheta_p\\
	&=
		\left(
		\del_t \beta_{ip}
		-
		\frac{1}{2} (
			\delta_{pr}\delta_{is}- \delta_{ps}\delta_{ir}
			)
			\del_t \gamma_{rs}
		\right)
		\vartheta_0 \wedge \vartheta_p\\
	&=
		\left(
		\del_t \beta_{ip} + \del_t \gamma_{ip}
		\right)
		\vartheta_0 \wedge \vartheta_p.
\end{align*}
Meanwhile, by definition of the hypersymplectic flow,
\[
\del_t \omega_i 
	= 
		\diff \tau_i
	=
		\left( V^{-1} Q'_{ip}\right)' \vartheta_0 \wedge \vartheta_p,
\]
where we have taken $\diff$ of the formula of part~\ref{torsion-forms-in-coframe} of Lemma~\ref{some-coframe-formulae}. We conclude that
\[
\del_t \beta_{ij} + \del_t \gamma_{ij} 
	=
		\left( V^{-1} Q'_{ij}\right)'	.
\]
The right-hand side is symmetric, and so the skew part of the left-hand side must vanish. In other words, $\del_t \gamma = 0$ as claimed.
\end{proof}

\subsection{The case of a symmetric coefficient matrix}

From now on, we focus entirely on the case of a $\T^3$-invariant hypersymplectic structure in normal form~\eqref{normal-form-equation} in which the coefficient matrix $\alpha_{ij}$ is \emph{symmetric}. 

\begin{lemma}\label{metric-when-symmetric}
Let $\underline{\omega}$ be a $\T^3$-hypersymplectic form in normal form~\eqref{normal-form-equation}, with symmetric coefficient matrix $\alpha \colon \T^1 \to S^2 \R^3$. The metric is given by
\[
g_{\underline{\omega}(t)} = V^2 \diff x_0^2 + Q_{ij} \diff x_i \diff x_j.
\]
\end{lemma}

\begin{proof}
When $\alpha$ is symmetric, the coframe $\vartheta_a$ of $T^*\T^4$ which featured in the previous subsection is simply the coordinate coframe $\vartheta_a = \diff x_a$. The result now follows from~\eqref{metric-in-coframe}
\end{proof}

\begin{lemma}\label{flow-as-a-pde}
Let $\underline{\omega}$ be a $\T^3$-hypersymplectic form in normal form~\eqref{normal-form-equation}, with symmetric coefficient matrix $\alpha \colon \T^1 \to S^2 \R^3$. The hypersymplectic flow starting at $\underline{\omega}$ is equivalent to
\begin{equation}
\del_t \alpha_{ij}
	=
		\left( \frac{1}{V}\left( \frac{\alpha_{ij}}{V}\right)'\right)',
\label{alpha-evolution}
\end{equation}
where $V = (\det \alpha)^{1/3}$ and the prime denotes $\diff/\diff x_0$. 
\end{lemma}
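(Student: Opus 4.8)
The plan is to reduce the flow equation to a statement about the coefficient matrix $\alpha$ by translating each ingredient of \eqref{hypersymplectic-flow} into the explicit language established in the previous subsection. Since $\alpha$ is symmetric, Proposition~\ref{symmetric-preserved} guarantees that $\alpha(t)$ remains symmetric along the flow, so the coframe $\vartheta_a$ associated to $\underline{\omega}(t)$ is just the coordinate coframe $\diff x_a$ for all $t$; this is the content of Lemma~\ref{metric-when-symmetric} and it removes all the $\gamma$-dependent complications. In particular $V = (\det\alpha)^{1/3}$ now involves the full coefficient matrix (not just its symmetric part), and $Q = V^{-1}\alpha$.

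The main computation is already essentially done inside the proof of Proposition~\ref{symmetric-preserved}: there we derived that $\del_t\omega_i = (\del_t\beta_{ip} + \del_t\gamma_{ip})\,\vartheta_0\wedge\vartheta_p$ on the geometric side, which in the symmetric case reads $\del_t\omega_i = \del_t\alpha_{ip}\,\diff x_0\wedge\diff x_p$; and on the flow side, using part~\ref{torsion-forms-in-coframe} of Lemma~\ref{some-coframe-formulae}, $\tau_i = V^{-1}Q'_{ip}\,\diff x_p$, so that $\del_t\omega_i = \diff\tau_i = (V^{-1}Q'_{ip})'\,\diff x_0\wedge\diff x_p$. Hence first I would simply record the identity
\[
\del_t\alpha_{ij} = \left(V^{-1}Q'_{ij}\right)',
\]
and then substitute $Q_{ij} = \alpha_{ij}/V$ to rewrite the right-hand side as $\bigl(\tfrac1V(\tfrac{\alpha_{ij}}{V})'\bigr)'$, which is exactly \eqref{alpha-evolution}. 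I should also check the converse direction — that any symmetric-matrix-valued solution of \eqref{alpha-evolution} yields, via \eqref{normal-form-equation}, a genuine solution of the hypersymplectic flow — but this is immediate since every step above is an equivalence: the ansatz \eqref{normal-form-equation} is preserved (Lemma~\ref{normal-form-preserved}), the $\vartheta_a$ computation is exact, and closedness of $\omega_i$ is automatic from the form of \eqref{normal-form-equation}.

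There is essentially no serious obstacle here; the result is a bookkeeping corollary of Lemma~\ref{some-coframe-formulae} and Proposition~\ref{symmetric-preserved}. The only point requiring a little care is making sure the flow equation \eqref{hypersymplectic-flow} is written in the right vectorial convention — i.e. that $\del_t\underline\omega = \diff\tau$ with $\tau_i = Q\diff^*(Q^{-1}\underline\omega)$ exactly matches part~\ref{torsion-forms-in-coframe} of Lemma~\ref{some-coframe-formulae} (it does, by construction of that lemma) — and noting that $\vartheta_0 = \diff x_0$ is time-independent while here the spatial $\vartheta_p = \diff x_p$ are too, so no extra terms from $\del_t\vartheta_p$ appear. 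One should also remark, as the paper does in passing, that the resulting scalar system is not parabolic, but that observation belongs to Remark~\ref{not-parabolic} rather than to this proof.
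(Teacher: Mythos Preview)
Your proposal is correct and follows essentially the same route as the paper: invoke Proposition~\ref{symmetric-preserved} to keep $\alpha$ symmetric so that $\vartheta_a=\diff x_a$, read off $\tau_i = V^{-1}Q'_{ij}\,\diff x_j$ from Lemma~\ref{some-coframe-formulae}, and then $\del_t\omega_i = \diff\tau_i$ gives $\del_t\alpha_{ij} = (V^{-1}Q'_{ij})' = \bigl(V^{-1}(\alpha_{ij}/V)'\bigr)'$. The paper's proof is just a terser version of what you wrote; your extra remarks on the converse direction and on $\del_t\vartheta_p=0$ are fine but not needed.
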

\begin{proof}
When $\alpha$ is symmetric, Proposition~\ref{symmetric-preserved} tells us the flow remains in symmetric normal form. From Lemma~\ref{Q-V-formulae}, $Q = V^{-1}\alpha$ along the flow. Since $\alpha$ is symmetric, $\vartheta_a = \diff x_a$; so Lemma~\ref{some-coframe-formulae} gives $\tau_i = V^{-1} Q'_{ij} \diff x_j$ and the result follows from the definition $\del_t \omega_i = \diff \tau_i$ of the hypersymplectic flow.
\end{proof}

\begin{remark}\label{not-parabolic}
The system \eqref{alpha-evolution} is a nonlinear system of PDE on $\mathbb{T}^1\times [0,+\infty)$ for $6$ functions. It is interesting to look at this system from a purely analytic point of view (as is done in \cite{HWY}). Expanding out the derivatives, the system is 
\begin{equation}
\partial_t \alpha_{ij}
= 
\frac{1}{V^2} 
\left( \alpha'' - \frac{1}{3}\langle \alpha,\alpha''\rangle_\alpha \alpha\right) 
- 
\frac{1}{V^2}\langle \alpha,\alpha'\rangle_\alpha \alpha'
+ 
\frac{1}{3V^2}\langle \alpha',\alpha'\rangle_\alpha \alpha
+ 
\frac{2}{9V^2} \langle \alpha,\alpha'\rangle_\alpha^2 \alpha,
\label{flow-expanded}
\end{equation}
where $\langle\beta,\gamma\rangle_\alpha=\tr \left( \alpha^{-1}\beta\alpha^{-1}\gamma\right)$. This is  the Riemmanian metric of the symmetric space metric on $S^2_+\R^3$ of positive definite inner-products on $\R^3$: we treat this as an open set in the vector space $S^2\R^3$ of all symmetric bilinear forms; then the metric evaluated on $\beta, \gamma \in S^2 \R^3 =T_\alpha S^2_+ \R^3$ is $\langle\beta,\gamma\rangle_\alpha$.

We linearise the right-hand side of~\eqref{flow-expanded}, which is denoted by $\mathscr{D}(\alpha)$, at $\alpha$. This gives the second-order linear differential operator $L_\alpha \colon \Gamma(\T^1, S^2 \R^3) \to \Gamma(\T^1, S^2\R^3)$ where
\begin{equation}
\begin{split}
L_\alpha(\beta)
& = 
\frac{1}{V^2}
\Big[ 
\beta'' 
- 
\frac{1}{3}\langle \alpha,\beta''\rangle_\alpha\alpha 
- 
\frac{1}{3}\langle \alpha,\alpha''\rangle_\alpha \beta 
-\langle\alpha,\beta'\rangle_\alpha\alpha' - \langle\alpha,\alpha'\rangle_\alpha\beta'\\
& \qquad \quad\qquad
+ 
\frac{2}{3}\langle\alpha',\beta'\rangle_\alpha\alpha 
- 
\frac{2}{3}\langle \alpha'\alpha^{-1}\alpha',\beta\rangle_\alpha\alpha 
+ 
\frac{1}{3}\langle\alpha',\alpha'\rangle_\alpha\beta\\
& \qquad\quad\qquad\qquad
+ 
\frac{4}{9}\langle\alpha,\alpha'\rangle_\alpha \langle\alpha,\beta'\rangle_\alpha\alpha
+ 
\frac{2}{9}\langle \alpha,\alpha'\rangle_\alpha^2\beta
\Big]
 - 
\frac{2}{3}\langle\alpha,\beta\rangle_\alpha \mathscr{D}(\alpha).
\end{split}
\end{equation}
The principal symbol of $L_\alpha$ in the direction $\xi$ is given by 
\[
\sigma(\xi)(\beta) 
= 
\frac{\xi^2}{V^2}\left(\beta - \frac{1}{3}\langle\alpha,\beta\rangle_\alpha\alpha\right). 
\]
This shows that $L_\alpha$ is \emph{not} elliptic, since $\sigma(\xi)(\alpha)=0$.
\end{remark}

When $\underline{\omega}$ is in symmetric normal form, the evolution equations for $V$ and $Q$ have an important simplification over the general case. We first give a simple formula for the Laplacian, and then derive the evolution equations.
 
\begin{lemma}\label{invariant-laplacian-formula}
Let $\underline{\omega}$ be a $\T^3$-invariant hypersymplectic structure in symmetric normal form~\eqref{normal-form-equation} and let $f \colon \T^4 \to \R$ be a smooth $\T^3$-invariant function. The Laplacian of $f$ with respect to the metric $g_{\underline{\omega}}$ is
\[
\Delta f = V^{-1}(V^{-1} f' )',
\]
where the prime denotes $\diff/\diff x_0$.
\end{lemma}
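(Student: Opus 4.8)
The plan is to use the explicit description of the metric $g_{\underline\omega}$ in symmetric normal form provided by Lemma~\ref{metric-when-symmetric}, namely $g_{\underline\omega} = V^2\,\diff x_0^2 + Q_{ij}\,\diff x_i\,\diff x_j$, together with the standard coordinate formula for the Laplace--Beltrami operator. Recall that for a metric $g$ with volume form $\dvol_g = \sqrt{\det g}\;\diff x^0\diff x^1\diff x^2\diff x^3$, one has $\Delta f = \tfrac{1}{\sqrt{\det g}}\,\del_a\big(\sqrt{\det g}\;g^{ab}\,\del_b f\big)$. The two facts I need about the metric in Lemma~\ref{metric-when-symmetric} are: (i) its determinant, and (ii) the $(0,0)$-entry of its inverse, together with the observation that the off-diagonal block vanishes so $g^{0i}=0$.

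First I would compute $\det g_{\underline\omega}$. Since the metric is block-diagonal with blocks $V^2$ and $Q$, we get $\det g_{\underline\omega} = V^2 \det Q = V^2$, because $\det Q = 1$ by the defining normalization of $Q$ (equivalently $Q = V^{-1}\alpha$ and $\det\alpha = V^3$). Hence $\sqrt{\det g_{\underline\omega}} = V$, which is consistent with part~\ref{volume} of Lemma~\ref{Q-V-formulae}. Next, because the metric is block diagonal, $g^{00} = V^{-2}$ and $g^{0i} = 0$. Now for a $\T^3$-invariant function $f$, we have $\del_i f = 0$ for $i=1,2,3$, so the only surviving term in the coordinate formula is the $a=b=0$ term:
\[
\Delta f = \frac{1}{V}\,\del_0\!\left( V \cdot g^{00}\,\del_0 f\right) = \frac{1}{V}\left( V \cdot V^{-2} f'\right)' = V^{-1}\big(V^{-1} f'\big)',
\]
which is the claimed formula. (Here I must also use that $V$, being $(\det\alpha)^{1/3}$ with $\alpha$ a function of $x_0$ alone, is itself $\T^3$-invariant, so the derivative $\del_0$ of $V^{-1}f'$ is the ordinary one-variable derivative and no $\del_i$ terms appear.)

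I expect no serious obstacle here: the computation is entirely routine once Lemma~\ref{metric-when-symmetric} is in hand. The only point requiring a line of care is justifying $\det Q = 1$ and hence $\sqrt{\det g_{\underline\omega}} = V$; this is immediate from the normalization $\det(Q(\mu_{\underline\omega})) = 1$ built into the definition of $g_{\underline\omega}$, or alternatively from $Q = V^{-1}\alpha$ and $V = (\det\alpha)^{1/3}$ as recorded in Lemma~\ref{Q-V-formulae}. A second minor point is that one should note the coordinate Laplacian formula applies verbatim on $\T^4$ since the coordinate vector fields are globally defined; $\T^3$-invariance of $f$ then kills all derivatives except in the $x_0$-direction.
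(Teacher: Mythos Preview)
Your proof is correct. The paper takes a slightly different but equivalent route: rather than the coordinate Laplace--Beltrami formula, it computes $\Delta f = -\diff^*\diff f = *\diff*(f'\,\diff x_0)$ directly via the Hodge star, using the identities $\mu_{\underline\omega} = V\,\diff x_{0123}$ and $*\diff x_0 = V^{-1}\diff x_{123}$ from Lemmas~\ref{lemma-metric-in-coframe} and~\ref{some-coframe-formulae}. Your approach via $\tfrac{1}{\sqrt{\det g}}\del_a(\sqrt{\det g}\,g^{ab}\del_b f)$ is entirely equivalent and perhaps more self-contained, since it only needs the block-diagonal form of the metric and $\det Q = 1$, without appealing to the Hodge star computations; the paper's route has the minor advantage of reusing formulae already established for other purposes.
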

\begin{proof}
This follows from Lemmas~\ref{lemma-metric-in-coframe} and~\ref{some-coframe-formulae} which give, in particular, that $\mu_{\underline{\omega}} = V \diff x_{0123}$ and $* \diff x_0 = V^{-1}\diff x_{123}$. So
\[
\Delta f = - \diff^* \diff f = * \diff * (f' \diff x_0) = * \diff (V^{-1} f' \diff x_{123}) = (V^{-1}f')' * \diff x_{0123} = V^{-1}(V^{-1}f')'.
\qedhere
\]
\end{proof}

\begin{proposition}
Let $\underline{\omega}(t)$ be the hypersymplectic flow starting from a $\T^3$-invariant hypersymplectic structure in symmetric normal form~\eqref{normal-form-equation}. Then $V$ and $Q$ evolve as follows.
\begin{align}
\del_t V 
	&=
		\frac{1}{3} \cT V,
		\label{dV/dt}\\
\del_t Q_{ij}
	&=
		\Delta Q_{ij} - \frac{1}{3} \cT Q_{ij},
		\label{dQ/dt}
\end{align} 
where $\cT = Q^{ij}\left\langle  \tau_i, \tau_j\right\rangle$ is twice the norm-squared of the torsion of the $G_2$-structure on $\T^7$ given by~\eqref{phi-from-omega}. 
\end{proposition}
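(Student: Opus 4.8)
The plan is to derive both evolution equations directly from the scalar PDE form of the flow, with no appeal to general $G_2$ or hypersymplectic evolution formulae. The inputs are: the flow equation $\partial_t\alpha = \bigl(V^{-1}Q'\bigr)'$ (Lemma~\ref{flow-as-a-pde}); the algebraic identities $Q = V^{-1}\alpha$ and $\det Q = 1$ (Lemma~\ref{Q-V-formulae}); the Laplacian formula $\Delta f = V^{-1}(V^{-1}f')'$ for $\T^3$-invariant $f$ (Lemma~\ref{invariant-laplacian-formula}); and the torsion $1$-forms $\tau_i = V^{-1}Q'_{ij}\diff x_j$ (part~\ref{torsion-forms-in-coframe} of Lemma~\ref{some-coframe-formulae}, using $\vartheta_j = \diff x_j$ in symmetric normal form). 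As a first step I would rewrite $\cT$ in matrix form: since the spatial part of $g_{\underline{\omega}}$ is $Q_{ij}\diff x_i\diff x_j$ (Lemma~\ref{metric-when-symmetric}), one gets $\langle\tau_i,\tau_j\rangle = V^{-2}Q^{kl}Q'_{ik}Q'_{jl}$, hence
\[
\cT = Q^{ij}\langle\tau_i,\tau_j\rangle = V^{-2}\tr\!\bigl(Q^{-1}Q'Q^{-1}Q'\bigr).
\]

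To prove~\eqref{dV/dt}, I would differentiate $V^3 = \det\alpha$ and apply Jacobi's formula to obtain $\partial_t V = \tfrac{1}{3}V\,\tr(\alpha^{-1}\partial_t\alpha)$. Substituting $\alpha^{-1} = V^{-1}Q^{-1}$ and $\partial_t\alpha = -V^{-2}V'Q' + V^{-1}Q''$, the $Q'$-term is proportional to $\tr(Q^{-1}Q') = (\log\det Q)' = 0$, and differentiating that identity once more gives $\tr(Q^{-1}Q'') = \tr(Q^{-1}Q'Q^{-1}Q')$. Hence $\tr(\alpha^{-1}\partial_t\alpha) = V^{-2}\tr(Q^{-1}Q'') = V^{-2}\tr(Q^{-1}Q'Q^{-1}Q') = \cT$, which is~\eqref{dV/dt}.

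To prove~\eqref{dQ/dt}, I would differentiate $Q = V^{-1}\alpha$, getting $\partial_t Q = -V^{-2}(\partial_t V)\alpha + V^{-1}\partial_t\alpha$; inserting $\partial_t V = \tfrac{1}{3}\cT V$ from the previous step together with $\partial_t\alpha = \bigl(V^{-1}Q'\bigr)'$ yields $\partial_t Q = -\tfrac{1}{3}\cT Q + V^{-1}\bigl(V^{-1}Q'\bigr)'$, and the last term is $\Delta Q$ by Lemma~\ref{invariant-laplacian-formula} applied entrywise. I do not expect a genuine obstacle here: the computation is routine matrix calculus. The one point to handle carefully---perhaps best isolated as a small lemma---is the pair of trace identities $\tr(Q^{-1}Q') = 0$ and $\tr(Q^{-1}Q'') = \tr(Q^{-1}Q'Q^{-1}Q')$, both of which rest on the normalisation $\det Q = 1$; everything else is direct substitution.
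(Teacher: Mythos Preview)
Your proposal is correct and matches the paper's direct computation (the paper actually gives two arguments: first by specialising the general evolution formulae for $\mu_{\underline{\omega}}$ and $Q$ from \cite{FY}, and then an alternative self-contained derivation which is essentially the same as yours). The trace identities $\tr(Q^{-1}Q')=0$ and $\tr(Q^{-1}Q'')=\tr(Q^{-1}Q'Q^{-1}Q')$ from $\det Q\equiv 1$ are exactly the ingredients the paper isolates as well.
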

\begin{proof}
In the case of an arbitrary hypersymplectic flow, the evolution equations for $\mu_{\underline{\omega}}$ and $Q$ are given in \cite{FY}:
\begin{align}
\del_t \mu_{\underline{\omega}}
	&=
		\frac{1}{3}\cT \mu_{\underline{\omega}}. 
		\label{dmu/dt}\\
\del_t Q_{ij}
	&=
		\Delta Q_{ij} - \left\langle \diff Q_{ip}, Q^{pq} \diff Q_{qj} \right\rangle
		+ \left\langle \tau_i, \tau_j \right\rangle
		- \frac{1}{3} \cT Q_{ij}.
		\label{general-dQ/dt}
\end{align}
(Equation~\eqref{dmu/dt} is (44) in \cite{FY}; in the notation of \cite{FY}, $|\mathbf{T}|^2 = \frac{1}{2}\cT$; equation~\eqref{general-dQ/dt} is Corollary 4.2 of \cite{FY}.) Now~\eqref{dV/dt} follows from~\eqref{dmu/dt} and the fact that $\mu_{\underline{\omega}} = V \diff x_{0123}$. Meanwhile~\eqref{dQ/dt} follows from~\eqref{general-dQ/dt} and the fact that
\[
\left\langle \tau_i,\tau_j \right\rangle
=
\left\langle V^{-1}Q'_{ik} \diff x_k, V^{-1} Q'_{jl} \diff x_l \right\rangle
=
V^{-2} Q'_{ik}Q^{kl}Q_{lj}
=
\left\langle \diff Q_{ik}, Q^{kl} \diff Q_{lj} \right\rangle,
\]
where we have used that the metric on $\T^4$ is $g_{\underline{\omega}} = V^2 \diff x_0^2 + Q_{ij}\diff x_i \diff x_j$ (Lemma~\ref{lemma-metric-in-coframe}) to compute $\left\langle \diff x_k, \diff x_l \right\rangle = Q^{kl}$.

Alternatively these two equations can be computed directly. For~\eqref{dV/dt}, use $\log V = \frac{1}{3} \log\det \alpha$ to obtain
\[
\frac{1}{V} \del_t V 
= 
\frac{1}{3} \alpha^{ij} \del_t\alpha_{ij}
=
\frac{1}{3V} Q^{ij}\left( \frac{1}{V} Q'_{ij}\right)'
=
\frac{1}{3V^2}Q^{ij} \left( Q''_{ij} - \frac{V'}{V} Q'_{ij}\right),
\]
where we've used $Q = V^{-1} \alpha$ and the evolution equation~\eqref{alpha-evolution} for $\alpha$ in the form
\[
\del_t \alpha = \left( \frac{1}{V} \left( \frac{\alpha}{V} \right)'\right)' = \left( \frac{1}{V} Q'\right)'.
\]
Now differentiating the identity $\det Q \equiv 1$ implies that $Q^{ij} Q'_{ij}=0$ and hence, differentiating again, $Q^{ij}Q''_{ij} - Q^{ip}Q'_{pq}Q^{qj}Q'_{ij} = 0$. From here, and using the fact that $\cT = V^{-2} Q^{ip}Q'_{pq}Q^{qj}Q'_{ij}$ we obtain~\eqref{dV/dt}.

For~\eqref{dQ/dt}, we have
\[
\del_t Q 
= 
	\del_t (V^{-1} \alpha) 
= 
	\frac{1}{V} \del_t \alpha - \alpha \frac{1}{V^2} \del_t V
=
	\frac{1}{V}\left( \frac{1}{V} Q' \right)' 
	-
	\frac{1}{3V} \cT \alpha
=
	\Delta Q - \frac{1}{3} \cT Q,
\]
where in the second equality we use the evolution~\eqref{alpha-evolution} of $\alpha$ and in the final step we have used Lemma~\ref{invariant-laplacian-formula} to recognise $\Delta Q$.
\end{proof}

\subsection{Diagonal coefficient matrix}

We make a short digression to discuss the $\T^3$-invariant hpyersymplectic structures of ``simple type'', introduced in \cite{HWY}. These are in normal form with \emph{diagonal} coefficient matrix  $\alpha_{ij} = \delta_{ij} (1+ \phi_i'')$ for some potential functions $\phi_i \colon \T^1 \to \R$. In that paper, it was not shown directly that simple type structures are preserved under the flow. Instead the evolution equations of the $\phi_i$ were derived \emph{assuming} this, and then short-time existence of the resulting system was proved after the fact. This was made difficult because these evolution equations are not parabolic (just as is described in Remark~\ref{not-parabolic}). Instead, the system was converted into a combined differential-integral system; short-time existence for this was then shown using the same functional analytic ideas familiar from the standard theory of parabolic equations. 

In this subsection, we will show directly that if we begin with an off-diagonal term in $\alpha$ which is zero at $t=0$, then this condition is automatically preserved under the flow. In particular, hypersymplectic stuctures of simple type are automatically preserved along the flow. This gives a much simpler argument for short-time existence of the flow considered in~\cite{HWY} (although the approach here relies on Bryant--Xu's short-time existence result for the $G_2$-Laplacian flow~\cite{BX}).

\begin{proposition}
Let $\underline{\omega}(t)$ be a hypersymplectic flow on $\T^4$, starting from a $\T^3$-invariant structure in symmetric normal form. Suppose that $\alpha_{12}(\cdot, 0) \equiv 0$. Then $\alpha_{12}(\cdot, t) \equiv 0$ for as long as the flow exists. The same is true for the other off-diagonal terms, $\alpha_{23}$ and $\alpha_{13}$. In particular, if $\alpha(\cdot, 0)$ is diagonal then this remains true for as long as the flow exists.
\end{proposition}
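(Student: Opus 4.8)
The plan is to show that the vanishing of a single off-diagonal entry $\alpha_{12}$ is preserved by deriving a closed evolution equation for $\alpha_{12}$ alone and observing that it is a linear homogeneous ODE-in-time (pointwise in $x_0$, up to spatial derivatives) with zero initial data. Concretely, I would start from the PDE form \eqref{alpha-evolution} of the flow, $\del_t \alpha_{ij} = \left(V^{-1}(\alpha_{ij}/V)'\right)'$ with $V=(\det\alpha)^{1/3}$, and expand the right-hand side as in \eqref{flow-expanded}. The key structural point is that every term on the right-hand side of \eqref{flow-expanded} is built from $\alpha$, $\alpha'$, $\alpha''$ using only the operations of matrix multiplication by $\alpha^{-1}$, the trace pairing $\langle\cdot,\cdot\rangle_\alpha$, and scalar multiplication by $\alpha$ (or $\mathscr{D}(\alpha)$). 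So I would restrict attention to the subspace $W\subset S^2\R^3$ of symmetric matrices with vanishing $(1,2)$-entry and check that, when $\alpha$, $\alpha'$, $\alpha''$ all lie in $W$, the expression $\mathscr{D}(\alpha)$ also lies in $W$.

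The main obstacle is precisely this linear-algebra verification: the subspace $W$ is \emph{not} preserved by multiplication by an arbitrary $\alpha^{-1}$ or by the map $\beta\mapsto\alpha^{-1}\beta\alpha^{-1}$ for general symmetric $\alpha$ — it is only preserved when $\alpha$ itself has block structure compatible with the $(1,2)$-slot vanishing. The resolution is to use a discrete symmetry: the reflection $R = \mathrm{diag}(-1,1,1)$ (or more precisely the involution swapping the roles appropriately, e.g. $\mathrm{diag}(1,-1,1)$) acts on $S^2\R^3$ by $\alpha\mapsto R\alpha R^T$, and $W = \{\alpha : R\alpha R^T$ has the same $(1,2)$-entry sign-flipped, i.e. $\alpha_{12}=0\}$ is exactly the fixed-point set of the sign-reversing action on that coordinate. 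More cleanly: let $S$ be the diffeomorphism of $\T^4$ sending $x_2\mapsto -x_2$ (and fixing $x_0,x_1,x_3$), suitably interpreted on the torus. This $S$ acts on $\T^3$-invariant hypersymplectic structures in symmetric normal form by $\alpha_{ij}\mapsto (R\alpha R^T)_{ij}$ with $R=\mathrm{diag}(1,-1,1)$, flipping the signs of $\alpha_{12}$ and $\alpha_{23}$ while fixing $\alpha_{13}$ and the diagonal. Since the hypersymplectic flow is natural under diffeomorphisms, $S$ maps the flow starting at $\alpha(\cdot,0)$ to the flow starting at $R\alpha(\cdot,0)R^T$. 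If $\alpha_{12}(\cdot,0)=0$ then $R\alpha(\cdot,0)R^T = \alpha(\cdot,0)$ is unchanged, and uniqueness of the flow (Bryant--Xu, via Theorem~\ref{FY-extension}'s setup) forces $R\alpha(\cdot,t)R^T = \alpha(\cdot,t)$ for all $t$, i.e. $\alpha_{12}(\cdot,t)=0$.

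So the steps, in order, are: (i) identify the three coordinate reflections $R_1=\mathrm{diag}(-1,1,1)$, $R_2=\mathrm{diag}(1,-1,1)$, $R_3=\mathrm{diag}(1,1,-1)$ and the corresponding diffeomorphisms $S_1,S_2,S_3$ of $\T^4$ (each negating one of $x_1,x_2,x_3$ — well-defined on $\T^1=\R/2\pi\Z$ since $x\mapsto -x$ is an isometry); (ii) check that $S_k$ pulls back a structure in symmetric normal form with matrix $\alpha$ to one with matrix $R_k\alpha R_k^T$ — this is a one-line computation from \eqref{normal-form-equation} using $S_k^* \diff x_{0j} = \pm\diff x_{0j}$ and $S_k^*\diff x_{pq} = \pm \diff x_{pq}$, together with the fact that the induced orientation is preserved so the normal-form description transforms covariantly; (iii) observe $R_k\alpha R_k^T$ has the same entries as $\alpha$ except that the two off-diagonal entries involving index $k$ change sign; (iv) invoke naturality of the hypersymplectic flow under diffeomorphisms plus uniqueness (from Bryant--Xu short-time existence for the associated $G_2$-Laplacian flow, as used throughout the paper) to conclude that if the $(1,k)$- or $(k,m)$-entry vanishes at $t=0$ it is fixed by the symmetry $S_k$ and hence vanishes for all $t$. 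Running this for $k=1,2,3$ handles $\alpha_{12}$ (via $S_1$ or $S_2$), $\alpha_{23}$ (via $S_2$ or $S_3$), and $\alpha_{13}$ (via $S_1$ or $S_3$); if all three vanish at $t=0$, all three vanish along the flow, i.e. $\alpha$ stays diagonal. The only genuinely delicate point is step (ii), making sure the normal-form ansatz really does transform by conjugation — but since $S_k$ commutes with the $\T^3$-action and preserves the orientation, and since the $\diff x_{pq}$-part of \eqref{normal-form-equation} is the fixed normalization $\frac12\epsilon_{ipq}\diff x_{pq}$, one checks that after applying $S_k^*$ one must re-normalize by a constant matrix (exactly $R_k$, up to the automatic normal-form reduction of Lemma~\ref{NF-01}), which is what produces the conjugation $\alpha\mapsto R_k\alpha R_k^T$.
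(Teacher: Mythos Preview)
Your symmetry idea is elegant and it does cleanly prove the final sentence of the proposition (that a fully diagonal $\alpha$ stays diagonal), but there is a genuine gap in the argument for a \emph{single} off-diagonal entry. You correctly observe in step~(iii) that conjugation by $R_k=\mathrm{diag}(\pm1,\pm1,\pm1)$ flips the signs of the \emph{two} off-diagonal entries involving the index $k$. Consequently the fixed-point set of the symmetry $\alpha\mapsto R_k\alpha R_k^T$ is $\{\alpha_{1k}=\alpha_{2k}=0\}$ (two entries vanish), not $\{\alpha_{12}=0\}$ alone. If only $\alpha_{12}(\cdot,0)=0$ while, say, $\alpha_{23}(\cdot,0)\not\equiv 0$, then no choice of $R_k$ fixes the initial data, and the uniqueness argument in step~(iv) does not fire. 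Indeed there is \emph{no} sign matrix $R$ with $(R\alpha R^T)_{ij}=\epsilon_i\epsilon_j\alpha_{ij}$ that reverses exactly one off-diagonal sign: $\epsilon_1\epsilon_2=-1$, $\epsilon_1\epsilon_3=+1$ forces $\epsilon_2\epsilon_3=-1$. (There is also a minor slip: a single coordinate reflection $S_k$ reverses orientation of $\T^4$, so the normal-form bookkeeping in step~(ii) needs an extra sign; one can repair this by reflecting two coordinates at once, but the resulting conjugation is again by a sign matrix and the obstruction above persists.)

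The paper's proof avoids this by working analytically rather than via a discrete symmetry. It uses the evolution equation \eqref{dQ/dt}, which gives $(\partial_t-\Delta)Q_{12}=-\tfrac13\,\cT\,Q_{12}$ for each entry \emph{individually}. Combined with the cohomological identity $\int_{\T^1} Q_{12}V\,\diff x_0=\int_{\T^1}\alpha_{12}\,\diff x_0\equiv 0$ (which forces $\max Q_{12}\ge 0$) and the maximum principle, one gets $\max Q_{12}(t)$ non-increasing and non-negative, hence identically zero, hence $Q_{12}\equiv 0$. The point is that this scalar heat equation with zeroth-order term decouples the $(1,2)$-entry from the others, something the reflection symmetries cannot do. If you want to salvage the symmetry approach for the full statement, you would need an additional involution of the flow that isolates a single off-diagonal entry; none is apparent, so the maximum-principle route seems essential here.
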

\begin{proof}
Let $\T^2_{02}$ denote the 2-torus in the coordinate directions $(x_0,x_2)$, oriented by $\diff x_0 \wedge \diff x_2$. Since $[\omega_1(t)]$ is constant,
\[
0 
	= 
		\frac{\diff}{\diff t} \int_{\T^2_{02}} \omega_1
	=
		2\pi \frac{\diff }{\diff t} \int_{0}^{2\pi} \alpha_{12}\, \diff x_0.
\]
So
\[
\int_0^{2\pi} Q_{12}(x_0,t) V(x_0t) \diff x_0
	=
		\int_0^{2\pi} \alpha_{12}(x_0,t) \diff x_0
	=
		\int_0^{2\pi} \alpha_{12}(x_0,0) \diff x_0
	=
		0.
\]
In particular, the spatial maximum of $Q_{12}$ is non-negative:
\[
\overline{Q}_{12}(t) := \max_{x_ \in \T^1} Q_{12}(x_0,t) \geq 0.
\]
Now by~\eqref{dQ/dt}, $\del_t Q_{12} = \Delta Q_{12} - \frac{1}{3} \cT Q_{12}$. But at a spatial maximum of $Q_{12}$ the term $-\frac{1}{3}\cT Q_{12}$ is non-positive. It follows from the maximum principle that $\overline{Q}_{12}(t)$ is decreasing and hence is identically zero. In other words, $Q_{12}(x_0,t) \equiv 0$ and so therefore $\alpha_{12}(x_0,t) = 0$ as well. 
\end{proof}

\section{Long time existence}

Let $\underline{\omega}$ be a $\T^3$-invariant hypersymplectic structure on $\T^4$ in symmetric normal form. The main result of this section is that the hypersymplectic flow starting at $\underline{\omega}$ exists for all time (Theorem~\ref{long-time-existence} below). The idea is to show that $\cT$ is uniformly bounded along the flow, and then appeal to the extension result of \cite{FY}, stated as Theorem~\ref{FY-extension} in the introduction. The proof that $\cT$ is bounded takes up the whole section. We begin with a bound on $Q$ itself. 

\begin{proposition}
Let $\underline{\omega}$ be a $\T^3$-invariant hypersymplectic structure on $\T^4$ in symmetric normal form and let $\underline{\omega}(t)$ be the hypersymplectic flow starting at $\underline{\omega}$, which exists on $[0,t_0)$ with $t_0 \leq \infty$. Write $M = \max_{x_0 \in \T^1} \tr Q(x_0)$. Then for any $t \in [0,t_0)$, 
\begin{equation}
\tr Q(x_0, t) \leq M.
\label{trQ-bounded}
\end{equation}
As a consequence, for any $\xi \in \R^3$,
\begin{equation}
\frac{|\xi|^2}{M^2} \leq \xi^T Q\xi \leq M |\xi|^2.
\label{Q-bounded}
\end{equation}
\end{proposition}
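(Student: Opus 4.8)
The plan is to prove the trace bound \eqref{trQ-bounded} by the maximum principle applied to $\tr Q$, and then deduce the two-sided bound \eqref{Q-bounded} by elementary linear algebra using $\det Q = 1$.

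\medskip

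\noindent\textbf{Step 1: Evolution of $\tr Q$.} From the evolution equation \eqref{dQ/dt}, taking the trace gives
\[
\del_t (\tr Q) = \Delta (\tr Q) - \tfrac{1}{3}\cT \,\tr Q.
\]
Since $\cT = Q^{ij}\langle \tau_i,\tau_j\rangle \geq 0$ (it is twice a squared norm) and $\tr Q > 0$ (as $Q$ is positive definite), the zeroth-order term $-\tfrac{1}{3}\cT\,\tr Q$ is $\leq 0$. Here I use that $\T^3$-invariance reduces everything to functions of $x_0$ alone, so $\Delta$ is the one-dimensional operator of Lemma~\ref{invariant-laplacian-formula}; in particular $\tr Q$ is a genuine function on the compact manifold $\T^1$ for each $t$, and the maximum principle applies.

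\medskip

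\noindent\textbf{Step 2: Maximum principle.} Let $h(t) = \max_{x_0 \in \T^1} \tr Q(x_0, t)$, which is a well-defined Lipschitz function of $t$ since $\T^1$ is compact. At a spatial maximum point $\Delta(\tr Q) \leq 0$ and $-\tfrac{1}{3}\cT\,\tr Q \leq 0$, so by the standard comparison argument (Hamilton's trick) $h$ is non-increasing in $t$. Hence $h(t) \leq h(0) = M$ for all $t \in [0, t_0)$, which is \eqref{trQ-bounded}.

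\medskip

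\noindent\textbf{Step 3: From the trace bound to \eqref{Q-bounded}.} Since $Q$ is symmetric positive definite with eigenvalues $\lambda_1, \lambda_2, \lambda_3 > 0$ satisfying $\lambda_1\lambda_2\lambda_3 = \det Q = 1$, the bound $\lambda_1 + \lambda_2 + \lambda_3 = \tr Q \leq M$ gives $\lambda_k \leq M$ for each $k$, hence $\xi^T Q \xi \leq M|\xi|^2$ for all $\xi$. For the lower bound, the smallest eigenvalue satisfies $\lambda_{\min} = (\lambda_j \lambda_k)^{-1} \geq M^{-2}$ using the constraint $\lambda_1\lambda_2\lambda_3 = 1$ together with $\lambda_j, \lambda_k \leq M$; therefore $\xi^T Q \xi \geq M^{-2}|\xi|^2$. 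This proves \eqref{Q-bounded}.

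\medskip

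\noindent\textbf{Main obstacle.} The only delicate point is the rigorous use of the maximum principle for the time-dependent evolution, i.e.\ establishing that $h(t) = \max_{x_0} \tr Q$ is non-increasing even though the maximum point may move. This is handled by the standard argument (the function $\tr Q$ satisfies a parabolic differential inequality $\del_t u \leq \Delta u$ with a sign-favourable zeroth-order term, and the spatial maximum of such a subsolution is non-increasing). Everything else is routine; the key conceptual input — that the zeroth-order coefficient $-\tfrac{1}{3}\cT$ has a sign — is already built into the structure of \eqref{dQ/dt}.
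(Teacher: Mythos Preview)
Your proof is correct and follows essentially the same approach as the paper: trace the evolution equation \eqref{dQ/dt} to obtain $(\del_t - \Delta)\tr Q = -\tfrac{1}{3}\cT\,\tr Q$, apply the maximum principle, and then deduce the two-sided bound from $\det Q = 1$. You have simply spelled out the eigenvalue argument for \eqref{Q-bounded} in more detail than the paper does.
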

\begin{proof}
Taking the trace of~\eqref{dQ/dt} we obtain
\[
(\del_t - \Delta) \tr Q = - \frac{1}{3} \cT \tr Q.
\]
The bound~\eqref{trQ-bounded} now follows from the maximum principle. This implies $Q$ is uniformly bounded above. The two-sided bound~\eqref{Q-bounded} on $Q$ now follows from the fact that $\det Q \equiv 1$ and $Q >0$.
\end{proof}

\begin{proposition}\label{heat-equation-T}
Under the hypersymplectic flow, starting at a $\T^3$-invariant hypersymplecic structure in symmetric normal form, the quantity $\cT$ satisfies the following evolution equation:
\begin{equation}
\begin{split}
(\partial_t -\Delta )\mathcal{T}
	& =  
		- \frac{2}{3}\mathcal{T}^2 
		+
		\frac{8}{V^2}\left(\frac{V'}{V}\right)^2\mathcal{T} 
		-
		\frac{6}{V^4}\frac{V'}{V}\tr\left(
			Q^{-\frac{1}{2}}Q''Q^{-\frac{1}{2}}\cdot Q^{-\frac{1}{2}}Q'Q^{-\frac{1}{2}}
				\right)\\
	& \qquad +
		\frac{2}{V^4} \frac{V'}{V} \tr \left(Q^{-\frac{1}{2}}Q'Q^{-\frac{1}{2}}\right)^3\\
	& \qquad  + 
		\frac{8}{V^4} \tr\left(
			Q^{-\frac{1}{2}}Q''Q^{-\frac{1}{2}}\cdot 
				\left(Q^{-\frac{1}{2}}Q'Q^{-\frac{1}{2}}\right)^2
			\right)\\
	& \qquad + 
		\frac{1}{V^2}\left[
			5\frac{V'}{V}\mathcal{T}'
			- 
			\frac{2}{V^2}\tr \left(Q^{-\frac{1}{2}}Q''Q^{-\frac{1}{2}}\right)^2
			-
			\frac{6}{V^2}
			\tr \left(Q^{-\frac{1}{2}}Q'Q^{-\frac{1}{2}}\right)^4
		\right].
\end{split}
\end{equation}
\end{proposition}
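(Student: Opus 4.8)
The plan is to compute $(\partial_t - \Delta)\cT$ directly from the evolution equations \eqref{dV/dt} and \eqref{dQ/dt} together with the formula $\cT = V^{-2}\operatorname{tr}\bigl((Q^{-1/2}Q'Q^{-1/2})^2\bigr)$ (which follows from part~\ref{torsion-forms-in-coframe} of Lemma~\ref{some-coframe-formulae} and Lemma~\ref{metric-when-symmetric}, since $\langle\tau_i,\tau_j\rangle = V^{-2}Q'_{ik}Q^{kl}Q'_{lj}$ and $\cT = Q^{ij}\langle\tau_i,\tau_j\rangle$). It is cleanest to work with the symmetric matrix $P := Q^{-1/2}Q'Q^{-1/2}$, so that $\cT = V^{-2}\operatorname{tr}(P^2)$, and to note that $\operatorname{tr} P = 0$ because $\det Q \equiv 1$. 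The strategy is: (i) derive scalar evolution equations for $\operatorname{tr}(P^k)$ from \eqref{dQ/dt}, and (ii) combine these with $\partial_t\log V = \tfrac13\cT$ using the Leibniz rule, being careful that all operators here are the $\T^3$-invariant ones from Lemma~\ref{invariant-laplacian-formula}, i.e. $\Delta f = V^{-1}(V^{-1}f')'$ and $\partial_t$ acting on $x_0$-dependent functions.

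Concretely, first I would rewrite \eqref{dQ/dt} purely in terms of $x_0$-derivatives: using $\Delta Q = V^{-1}(V^{-1}Q')'$ and $\cT = V^{-2}\operatorname{tr}(P^2)$, this becomes a second-order ODE-in-$x_0$, parabolic-in-$t$ equation $\partial_t Q = V^{-1}(V^{-1}Q')' - \tfrac13 V^{-2}\operatorname{tr}(P^2)\,Q$. Then I would compute $\partial_t P$ and $P'$ in terms of $Q$, $Q'$, $Q''$; since $P = Q^{-1/2}Q'Q^{-1/2}$ involves the matrix square root, it is more efficient to avoid $Q^{-1/2}$ explicitly and instead track the conjugation-invariant traces $\operatorname{tr}(P^2) = \operatorname{tr}(Q^{-1}Q'Q^{-1}Q')$, $\operatorname{tr}(P^3) = \operatorname{tr}((Q^{-1}Q')^3)$, $\operatorname{tr}(P^4)$, and the mixed trace $\operatorname{tr}(Q^{-1}Q''Q^{-1}Q')$, $\operatorname{tr}(Q^{-1}Q''Q^{-1}Q'Q^{-1}Q')$, $\operatorname{tr}((Q^{-1}Q'')^2)$ appearing on the right-hand side of the claimed formula. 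Differentiating these in $t$ brings in $\partial_t(Q^{-1}) = -Q^{-1}(\partial_t Q)Q^{-1}$ and $\partial_t Q' = (\partial_t Q)'$, and substituting $\partial_t Q$ from \eqref{dQ/dt} produces the various $V'/V$ and $\operatorname{tr}$ terms; differentiating twice in $x_0$ produces $\Delta\cT$. The identity $\operatorname{tr}(Q^{-1}Q') = (\log\det Q)' = 0$ and its derivatives ($\operatorname{tr}(Q^{-1}Q'') = \operatorname{tr}(P^2)$, etc.) are used repeatedly to simplify.

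The main obstacle I expect is purely organisational: this is a long and delicate bookkeeping computation with many matrix-trace terms that must cancel or combine in exactly the right way to produce the stated right-hand side, and sign errors or miscounted combinatorial factors (the $\tfrac83$, $\tfrac29$, $5$, $\tfrac{2}{3}$, etc.) are easy to make. A good way to organise it and guard against error is to track the "weight" of each term: $\cT$ scales like $V^{-2}(\text{length})^{-2}$, and by parabolic rescaling every term in $(\partial_t - \Delta)\cT$ must have the same scaling dimension, which rules out most spurious terms. I would also cross-check the final formula against the known case: the $\operatorname{tr}(P^4)$ and $\operatorname{tr}(P^2)^2 = \cT^2 V^4$ terms should reproduce the "reaction" part $-\tfrac23\cT^2$ and match the simple-type computation of \cite{HWY} when $Q$ is diagonal (where $P$ is diagonal with entries summing to zero, so $\operatorname{tr}(P^3)$ and the odd terms simplify). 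Once the algebra is in hand, the result follows by assembling the pieces; no further ideas are needed beyond careful use of Leibniz, the evolution equations, and the constraint $\det Q \equiv 1$.
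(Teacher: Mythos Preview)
Your proposal is correct and follows essentially the same approach as the paper: both compute $(\partial_t-\Delta)\cT$ directly from the formula $\cT = V^{-2}\tr(Q^{-1}Q'Q^{-1}Q')$, using the evolution equations \eqref{dV/dt}, \eqref{dQ/dt}, the Laplacian formula of Lemma~\ref{invariant-laplacian-formula}, and the identities $\tr(Q^{-1}Q')=0$, $\tr(Q^{-1}Q'')=\tr(P^2)$ coming from $\det Q\equiv 1$. The paper organises the bookkeeping slightly differently---it writes out $\cT'$, $\cT''$, and $\partial_t\cT$ separately in index notation (without introducing the shorthand $P$) and then assembles $(\partial_t-\Delta)\cT = \partial_t\cT - V^{-2}\cT'' + V^{-3}V'\cT'$---but this is purely cosmetic, and your scaling-weight check is a sensible safeguard the paper does not make explicit.
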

\begin{proof}
We will differentiate directly from $\mathcal{T}= V^{-2}Q_{ik}'Q^{kl}Q_{jl}'Q^{ij}$. Notice that from this formula alone one can see that terms involving $V''$ and $Q'''$ will appear in $\Delta \mathcal{T}$. One of the points of this Propostion is that these terms exactly cancel with counterparts in $\del_t \mathcal{T}$, thanks to the evolution equations of both $Q$ and $V$.

Starting with $\mathcal{T}'$, we have
\begin{align}
\mathcal{T}'
	& = 
		-2V^{-1}V' \mathcal{T}
		+ 
		V^{-2}Q_{ik}'' Q^{kl}Q_{lj}' Q^{ji}
		+ 
		V^{-2} Q_{ik}' Q^{kl}Q_{lj}'' Q^{ji} 
		\nonumber\\
	&\phantom{=}
		\qquad -
		V^{-2}Q_{ik}'Q^{kq}Q_{qp}'Q^{pl}Q_{lj}'Q^{ji}
		- 
		V^{-2}Q_{ik}'Q^{kl}Q_{lj}'Q^{jp}Q_{pq}'Q^{qi}
		\nonumber\\
	& = 
		-2V^{-1}V' \mathcal{T}
		+ 
		2 V^{-2}Q_{ik}'' Q^{kl}Q_{lj}' Q^{ji}
		-
		2V^{-2}Q_{ik}'Q^{kq}Q_{qp}'Q^{pl}Q_{lj}'Q^{ji} 
		\nonumber\\
	& = 
		- 2\frac{V'}{V}\mathcal{T}
		+ 
		\frac{2}{V^2}
		\left[
		\tr \left(
			Q^{-\frac{1}{2}}Q''Q^{-\frac{1}{2}}\cdot Q^{-\frac{1}{2}}Q'Q^{-\frac{1}{2}}
		\right)
		- 
		\tr \left(Q^{-\frac{1}{2}}Q'Q^{-\frac{1}{2}}\right)^3
		\right].
		\label{eqn:T-first-derivative}
\end{align}
Differentiating again we obtain
\begin{align*}
\mathcal{T}''
	&=
		-2\left(V^{-1}V'\right)'\mathcal{T} 
		-
		2V^{-1}V'\mathcal{T}' 
		-
		4 V^{-3}V'\left(Q''_{ik}Q^{kl}Q'_{lj}Q^{ji} - Q'_{ik}Q^{kq}Q'_{qp}Q^{pl}Q'_{lj}Q^{ji}\right) \\
	&\phantom{=} 
		\qquad 
		+ 
		2V^{-2}\Big[
			Q_{ik}'' Q^{kl}Q_{lj}' Q^{ji}
			-
			Q_{ik}'Q^{kq}Q_{qp}'Q^{pl}Q_{lj}'Q^{ji}
			\Big]'\\
	&=
		-2\left(V^{-1}V'\right)'\mathcal{T} 
		-
		2V^{-1}V'\mathcal{T}' 
		-
		2 V^{-1}V'\left(\mathcal{T}'+2V^{-1}V'\mathcal{T}\right) \\
	&\phantom{=} 
		\qquad 
		+ 
		2V^{-2}\Big[
			Q_{ik}'' Q^{kl}Q_{lj}' Q^{ji}
			-
			Q_{ik}'Q^{kq}Q_{qp}'Q^{pl}Q_{lj}'Q^{ji}
			\Big]',
\end{align*}
where we have used that 
\[
2V^{-2}\left(Q''_{ik}Q^{kl}Q'_{lj}Q^{ji} - Q'_{ik}Q^{kq}Q'_{qp}Q^{pl}Q'_{lj}Q^{ji}\right)= \mathcal{T}'+2V^{-1}V'\mathcal{T}. 
\]
Continuing we see that
\begin{align}
\mathcal{T}''
	&= 
		-2\left(V^{-1}V'\right)'\mathcal{T} 
		-
		2V^{-1}V'\mathcal{T}'
		-
		2 V^{-1}V'\left( \mathcal{T}'+2V^{-1}V'\mathcal{T}\right)\nonumber\\
	&\phantom{=} 
		\qquad
		+
		2V^{-2}\Big[
			Q_{ik}''' Q^{kl}Q_{lj}' Q^{ji} 
			- 
			Q_{ik}'' Q^{kq}Q^{pl}Q_{pq}' Q_{lj}' Q^{ji}
			+ 
			Q_{ik}'' Q^{kl}Q_{lj}'' Q^{ji} \nonumber\\
	&\phantom{=} 
			\qquad\qquad 
			- 
			Q_{ik}'' Q^{kl}Q_{lj}' Q^{jq}Q^{pi}Q_{pq}'   
			\Big] \nonumber\\
	&\phantom{=} 
		\qquad\qquad\qquad
			- 2V^{-2} \Big[ 
				Q_{ik}''Q^{kq}Q_{qp}'Q^{pl}Q_{lj}'Q^{ji} 
 				- 
 				Q_{ik}'Q^{kt}Q^{sq}Q_{st}'Q_{qp}'Q^{pl}Q_{lj}'Q^{ji}\nonumber\\
	&\phantom{=}
			\qquad\qquad\qquad\qquad  
				+ 
				 Q_{ik}'Q^{kq}Q_{qp}''Q^{pl}Q_{lj}'Q^{ji}
 				- 
				Q_{ik}'Q^{kq}Q_{qp}'Q^{pt}Q^{sl}Q_{st}'Q_{lj}'Q^{ji}\nonumber\\
	&\phantom{=}
			\qquad\qquad\qquad\qquad 
				+
	 			Q_{ik}'Q^{kq}Q_{qp}'Q^{pl}Q_{lj}''Q^{ji}
				- 
				Q_{ik}'Q^{kq}Q_{qp}'Q^{pl}Q_{lj}'Q^{jt}Q^{si}Q_{st}'
				\Big]\nonumber\\
	&=
		-2\left(\frac{V'}{V}\right)'\mathcal{T} 
		-
		4\frac{V'}{V}\mathcal{T}'
		-
		4 \left( \frac{V'}{V}\right)^2\mathcal{T} 
		+ 
		\frac{2}{V^2}\tr \left(
			Q^{-\frac{1}{2}}Q'''Q^{-\frac{1}{2}}\cdot Q^{-\frac{1}{2}}Q'Q^{-\frac{1}{2}}
				\right) \nonumber\\
	&\phantom{=} 
		\qquad
		- 
		\frac{10}{V^2}\tr \left(
			Q^{-\frac{1}{2}}Q''Q^{-\frac{1}{2}}\cdot \left( Q^{-\frac{1}{2}}Q'Q^{-\frac{1}{2}}\right)^2
		\right)
		+ 
		\frac{2}{V^2} \tr \left(Q^{-\frac{1}{2}}Q''Q^{-\frac{1}{2}}\right)^2\nonumber\\
	&\phantom{=} 
		\qquad\qquad
		+ 
		\frac{6}{V^2}\tr \left(Q^{-\frac{1}{2}}Q'Q^{-\frac{1}{2}}\right)^4. 
		\label{eqn:T-second-derivative}
\end{align}

Next we compute the time derivative of $\mathcal{T}$, again starting from $\mathcal{T} = V^{-2}Q'_{ik}Q^{kl}Q'_{jl}Q^{ij}$:
\begin{align*}
\del_t \mathcal{T}
	&=
		-\frac{2}{3} \mathcal{T}^2 
		+ 
		V^{-2} (\Delta Q_{ik})'Q^{kl}Q'_{jl}Q^{ij} 
		+ 
		V^{-2} Q'_{ik}Q^{kl}(\Delta Q_{jl})'Q^{ij}\\
	&\phantom{=}
		\qquad
			- V^{-2} Q'_{ik}Q^{kq}\Delta Q_{qp}Q^{pl}Q'_{lj}Q^{ji}
			- V^{-2} Q'_{ik}Q^{kl}Q'_{lj}Q^{jp}\Delta Q_{pq}Q^{qi}\\
	&=
		-\frac{2}{3}\mathcal{T}^2 
		+ 
		2V^{-2}(\Delta Q_{ik})' Q^{kl}Q_{jl}'Q^{ij}
		-
		2 V^{-2}Q'_{ik}Q^{kq}\Delta Q_{qp}Q^{pl}Q'_{lj}Q^{ji}.
\end{align*}
Here we have used \eqref{dV/dt} and \eqref{dQ/dt} which give $\del_t V = \frac{1}{3} \mathcal{T}V$ and  $\del_t Q = \Delta Q - \frac{1}{3}  \mathcal{T} Q$. 

We now use  Lemma~\ref{invariant-laplacian-formula} which gives that, for a function $f$ of $x_0$ only, 
\[
\Delta f = V^{-1}(V^{-1}f')' = V^{-2}f'' - V^{-3}V'f'.
\]
This means that
\begin{align}
\del_t \mathcal{T}
	&=
		-\frac{2}{3} \mathcal{T}^2 
		+ 
		2V^{-2}\left( 
		V^{-2}Q''_{ik} - V^{-3}V'Q'_{ik}
		\right)'Q^{kl}Q'_{jl}Q^{ij}\nonumber\\
	&\phantom{=}
		\qquad
			-2 V^{-2} Q'_{ik}Q^{kq}\left( 
			V^{-2}Q''_{qp} - V^{-3}V'Q'_{qp}
			\right)Q^{pl}Q'_{lj}Q^{ji} \nonumber\\
	&=
		-\frac{2}{3} \mathcal{T}^2
		+
		2V^{-2} \left[ 
		V^{-2}Q'''_{ik} - 3V^{-3}V' Q''_{ik} - \left( V^{-3}V'\right)' Q'_{ik}
		\right]Q^{kl}Q'_{jl}Q^{ij}	\nonumber\\
	&\phantom{=}
		\qquad
			-2 V^{-2} Q'_{ik}Q^{kq}\left( 
			V^{-2}Q''_{qp} - V^{-3}V'Q'_{qp}
			\right)Q^{pl}Q'_{lj}Q^{ji} \nonumber\\
	&=	
		-\frac{2}{3} \mathcal{T}^2
		-
		2 \left( V^{-3}V'\right)	' \mathcal{T}
		+
		\frac{2}{V^4}
		\tr \left( Q^{-1/2}Q''' Q^{-1/2} \cdot Q^{-1/2}Q'Q^{-1/2}\right)\nonumber	\\
	&\phantom{=}
		\qquad
			- \frac{6V'}{V^5} \tr \left( Q^{-1/2}Q''Q^{-1/2} \cdot Q^{-1/2}Q'Q^{-1/2}\right)\nonumber\\
	&\phantom{=}
		\qquad\qquad
				-\frac{2}{V^4} \tr \left( 
					Q^{-1/2}Q''Q^{-1/2} \cdot \left( Q^{-1/2}Q'Q^{-1/2}\right)^2
				\right)	\nonumber\\
	&\phantom{=}
		\qquad\qquad\qquad
					+\frac{2V'}{V^5} \tr \left( 
					Q^{-1/2}Q'Q^{-1/2}
					\right)^3.
					\label{eqn:T-time-derivative}
\end{align}
We can now put the pieces together, using equations~\eqref{eqn:T-second-derivative} and~\eqref{eqn:T-time-derivative} for $\mathcal{T}''$ and $\del_t\mathcal{T}$ along with the formula for the Laplacian, $\Delta \mathcal{T} = V^{-2}\mathcal{T}'' - V^{-3}V'\mathcal{T}'$. This gives the stated equation for $(\del_t - \Delta)\mathcal{T}$.
\end{proof}

\begin{theorem}\label{T-bounded}
Suppose the hypersymplectic flow, starting at a $\T^3$-invariant hypersymplectic structure in symmetric normal form, exists for $t \in [0,t_0)$ with $t_0 \leq \infty$. Let $\overline{\mathcal{T}}(t) = \max_{\T^4\times\{t\}} \mathcal{T}$. Then, 
\[
\frac{\diff \overline{\mathcal{T}}}{\diff t} \leq - \frac{1}{3} \overline{\mathcal{T}}^2.
\]
in the sense of barriers. It follows that
\[
\overline{\mathcal{T}}(t) \leq \frac{\overline{\mathcal{T}}(0)}{1 + \frac{1}{3}\overline{\mathcal{T}}(0) t}.
\]
\end{theorem}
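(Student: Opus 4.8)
The strategy is a maximum-principle argument for $\mathcal{T}$, which is $\T^3$-invariant and hence a function of $(x_0,t)$ alone. Fix $t$ and let $x_0^*$ be a point where $x_0\mapsto\mathcal{T}(x_0,t)$ attains its spatial maximum $\overline{\mathcal{T}}(t)$; we work throughout at $x_0^*$. There $\mathcal{T}'=0$ and $\mathcal{T}''\le 0$, so by Lemma~\ref{invariant-laplacian-formula} (which gives $\Delta\mathcal{T}=V^{-1}(V^{-1}\mathcal{T}')'$) we have $\Delta\mathcal{T}=V^{-2}\mathcal{T}''\le 0$, whence $\partial_t\mathcal{T}\le(\partial_t-\Delta)\mathcal{T}$. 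It therefore suffices to estimate the right-hand side of the evolution equation in Proposition~\ref{heat-equation-T} at $x_0^*$. Abbreviate $B=Q^{-1/2}Q'Q^{-1/2}$, $A=Q^{-1/2}Q''Q^{-1/2}$ (symmetric matrices), $w=V'/V$, and $r=\tr(B^2)=V^2\mathcal{T}$, $q=\tr(B^3)$; if $r=0$ then $\mathcal{T}(\cdot,t)\equiv 0$ and there is nothing to prove, so assume $r>0$. Differentiating $\det Q\equiv 1$ once and twice yields the constraints $\tr B=0$ and $\tr A=\tr(B^2)=r$, and the spatial-maximum condition $\mathcal{T}'=0$, read from~\eqref{eqn:T-first-derivative}, is equivalent to $\tr(AB)-q=wr$. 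Substituting $\mathcal{T}'=0$ into Proposition~\ref{heat-equation-T} (killing the term $5\tfrac{V'}{V}\mathcal{T}'$) and multiplying through by $V^4$, the desired inequality $(\partial_t-\Delta)\mathcal{T}\le-\tfrac13\mathcal{T}^2$ becomes a purely algebraic statement about $A$, $B$ and the scalar $w$, subject to the three constraints above.

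Two observations prove this algebraic inequality. First, a dimensional miracle: every trace-free symmetric $3\times 3$ matrix $B$ satisfies $\tr(B^4)=\tfrac12(\tr B^2)^2$. This is immediate from Cayley--Hamilton, which for such $B$ reads $B^3=\tfrac12(\tr B^2)\,B+(\det B)\,I$; multiplying by $B$ and taking traces (using $\tr B=0$) gives the claim. Second, one completes the square in $A$: writing $A=2B^2-\tfrac{3w}{2}B-\tfrac r3 I+H$ with $\tr H=0$ (the term $-\tfrac r3 I$ being forced by $\tr A=r$), the $A$-dependent part of the expression collapses to an explicit polynomial in $(w,r,q)$ minus $2\tr(H^2)$. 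Inserting $\tr(B^4)=\tfrac12 r^2$ and simplifying, one finds $V^4(\partial_t-\Delta)\mathcal{T}=-\tfrac13 r^2+\tfrac{25}{2}w^2 r-10wq-2\tr(H^2)$ at $x_0^*$. Now the constraint $\tr(AB)-q=wr$ translates into $\tr(HB)=\tfrac{5w}{2}r-q$, so Cauchy--Schwarz for the Frobenius inner product, $\tr(HB)^2\le\tr(H^2)\tr(B^2)=r\,\tr(H^2)$, gives $-2\tr(H^2)\le-\tfrac2r\big(\tfrac{5w}{2}r-q\big)^2$. Expanding the right side, the $w^2 r$ and $wq$ terms cancel exactly, leaving
\[
V^4(\partial_t-\Delta)\mathcal{T}\;\le\;-\tfrac13 r^2-\frac{2q^2}{r}\;\le\;-\tfrac13 r^2\;=\;-\tfrac13 V^4\mathcal{T}^2
\]
at $x_0^*$. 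Dividing by $V^4>0$ gives $\partial_t\mathcal{T}\le-\tfrac13\mathcal{T}^2$ there.

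The delicate point is exactly this cancellation. The scalar $w=V'/V$ is a priori unconstrained (it records the derivative of $\det\alpha$, which is scaled out of $Q$), so it contributes a $+\tfrac{25}{2}w^2 r$ term with no favourable sign; the argument works only because the spatial-maximum identity $\mathcal{T}'=0$ pins down $\tr(HB)$ in terms of $w$, and Cauchy--Schwarz then returns precisely the $-\tfrac{25}{2}w^2 r$ needed to absorb it. The identity $\tr(B^4)=\tfrac12(\tr B^2)^2$ is equally indispensable and has no analogue in other dimensions, which is one reason the method is tightly bound to the $4$-dimensional (equivalently $7$-dimensional) setting.

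Finally, having shown $\partial_t\mathcal{T}(x_0^*,t)\le-\tfrac13\overline{\mathcal{T}}(t)^2$ for every spatial maximum point $x_0^*$ at every time $t$, the standard maximum-principle argument for the maximum of a smooth time-dependent function (``Hamilton's trick'') gives $\tfrac{d}{dt}\overline{\mathcal{T}}\le-\tfrac13\overline{\mathcal{T}}^2$ in the sense of the upper Dini derivative, i.e.\ in the sense of barriers. Comparing with the ODE $\dot y=-\tfrac13 y^2$, $y(0)=\overline{\mathcal{T}}(0)$, whose solution is $y(t)=\overline{\mathcal{T}}(0)/(1+\tfrac13\overline{\mathcal{T}}(0)t)$, yields $\overline{\mathcal{T}}(t)\le\overline{\mathcal{T}}(0)/(1+\tfrac13\overline{\mathcal{T}}(0)t)$, as claimed.
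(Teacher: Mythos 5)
Your proposal is correct, and it follows the paper's overall strategy exactly up to the crucial algebraic step: evaluate the evolution equation of Proposition~\ref{heat-equation-T} at a spatial maximum, use $\mathcal{T}'=0$ (via~\eqref{eqn:T-first-derivative}) to control $V'/V$, use $\Delta\mathcal{T}\le 0$, invoke the constraints $\tr B=0$ and $\tr A=\tr(B^2)$ from $\det Q\equiv 1$, and finish with Hamilton's trick and ODE comparison. Where you genuinely diverge is in the proof of the resulting pointwise matrix inequality. The paper substitutes the value of $V'/V$ to get the inequality~\eqref{bound-AB} in $A$ and $B$ alone, then proves it (Lemma~\ref{lem:linear-algebra-inequality}) by setting $\tilde A=A-B^2$, diagonalising $B$, splitting $\tilde A$ into diagonal and off-diagonal parts, and exhibiting an explicit sum-of-squares identity imported from \cite[(3.15)]{HWY}. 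You instead keep $w=V'/V$ as a free parameter, complete the square via $A=2B^2-\tfrac{3w}{2}B-\tfrac r3 I+H$, use the Cayley--Hamilton identity $\tr(B^4)=\tfrac12(\tr B^2)^2$ for trace-free symmetric $3\times3$ matrices, and close with Cauchy--Schwarz applied to $\tr(HB)$, whose value is pinned by the maximum condition. I have checked your coefficients ($-\tfrac13 r^2+\tfrac{25}{2}w^2r-10wq-2\tr(H^2)$, and $\tr(HB)=\tfrac{5w}{2}r-q$) and the exact cancellation after Cauchy--Schwarz; the computation is right, and in fact yields the slightly sharper bound $V^4(\del_t-\Delta)\mathcal{T}\le-\tfrac13 r^2-2q^2/r$ at the maximum. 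What each approach buys: the paper's explicit sum of squares makes the equality case visible and leans on a known elementary inequality, while yours is coordinate-free, avoids diagonalisation entirely, and isolates cleanly the two structural inputs (the $3\times3$ Cayley--Hamilton identity and the maximum condition) that make the constant $-\tfrac13$ come out. The only points you gloss over --- the degenerate case $\tr(B^2)=0$ at the maximum, which forces $\mathcal{T}\equiv 0$ and a stationary flow, and the unmotivated choice of coefficients $2$ and $-\tfrac{3w}{2}$ in the decomposition of $A$ --- are harmless.
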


\begin{proof}
To ease the notation, we write
\begin{align*}
A &= Q^{-1/2}Q''Q^{-1/2}, \\
B &= Q^{-1/2}Q' Q^{-1/2}.
\end{align*}
Note that $V^2 \mathcal{T} = \tr (B^2)$. 

Now let $s \in [0,t_0)$ and suppose that $\overline{\mathcal{T}}(s) = \mathcal{T}(p,s)$ for some $p \in \T^1$. Using equation~\eqref{eqn:T-first-derivative} and $\mathcal{T}'(p,s)=0$, we see that at $(p,s)$,
\[
\frac{V'}{V}
	=
		\frac{1}{\tr (B^2)}
		\left( 
		\tr \left( AB \right)
		-
		\tr \left( B^3\right)
		\right).
\]
We also have that $\Delta \mathcal{T} \leq 0$ at the point $(p,s)$. It follows from the heat equation of Proposition~\ref{heat-equation-T} that at the spatial maximum $\del_t \mathcal{T}(p,s)$ is bounded above by
\begin{equation}
\begin{split}
-\frac{2}{3}\mathcal{T}^2 
+ 
\frac{2}{V^4 \tr(B^2)}
\Bigg[
	\left( \tr \left(AB\right) \right)^2
	-
	4\tr \left(AB\right) \tr (B^3)
      	+ 
	3 \left( \tr (B^3) \right)^2\\
	+
	4\tr \left(AB^2\right) \tr (B^2)
	-
	\tr (A^2) \tr (B^2)
       	- 
	3  \tr (B^4) \tr (B^2)
\Bigg].
\end{split}
\end{equation}
We will complete the proof by showing that
\begin{equation}
\begin{split}
	\left( \tr \left(AB\right) \right)^2
	-
	4\tr \left(AB\right) \tr (B^3)
      	+ 
	3 \left( \tr (B^3) \right)^2
	+
	4\tr \left(AB^2\right) \tr (B^2) \\
	-
	\tr (A^2) \tr (B^2)
       	- 
	3  \tr (B^4) \tr (B^2)
&\leq 
	\frac{1}{6} \left(\tr(B^2)\right)^3.
\label{bound-AB}
\end{split}
\end{equation}
Assuming this momentarily (and recalling that $\mathcal{T} = V^{-2} \tr(B^2)$) we have that 
\[
\del_t \mathcal{T} (p,s) 
	\leq 
		- \frac{2}{3} \mathcal{T}^2(p,s) + \frac{1}{3} \frac{\left(\tr(B^2)\right)^2}{V^4} (p,s)
	=
		-\frac{1}{3} \mathcal{T}^2(p,s).
\]
For fixed $p$, $\mathcal{T}(p,t)$ is a lower barrier for the Lipschitz function $\overline{\mathcal{T}}(t)$ at $t=s$. Therefore,
\[
\frac{\diff \overline{\mathcal{T}}}{\diff t}(s) \leq \del_t \mathcal{T}(p,s) \leq -\frac{1}{3} \mathcal{T}^2 (p,s) = -\frac{1}{3} \overline{\mathcal{T}}^2(s),
\]
which completes the proof.

To establish the inequality~\eqref{bound-AB}, we use Lemma~\ref{lem:linear-algebra-inequality} below, which proves this inequality for any pair $A,B$ of real symmetric $3\times 3$ matrices with $\tr (B)=0$ and $\tr(A) = \tr (B^2)$. In our situation, differentiating $\det Q=1$ once with respect to $x_0$ shows that $\tr (B)=0$, whilst differentiating it twice shows that $\tr(A) = \tr(B^2)$. 
\end{proof}

\begin{lemma}
\label{lem:linear-algebra-inequality}
Let $A,B$ be two real symmetric $3\times 3$ matrices with $\tr(B)=0$ and $\tr(A)=\tr(B^2)$. Then inequality~\eqref{bound-AB} holds.
\end{lemma}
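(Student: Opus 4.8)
The idea is to fix $B$ and view~\eqref{bound-AB} as a constrained maximization over $A$, using that its left-hand side is a \emph{concave} quadratic function of $A$.

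\emph{Step 1: reduction to diagonal matrices.} Every trace appearing in~\eqref{bound-AB} is unchanged under $(A,B)\mapsto(RAR^{T},RBR^{T})$ with $R\in\SO(3)$, so I would first diagonalize $B=\operatorname{diag}(b_1,b_2,b_3)$; the hypothesis $\tr(B)=0$ gives $b_1+b_2+b_3=0$. Put $p=\tr(B^2)=\sum_i b_i^2$; the case $p=0$ forces $B=0$ and is trivial, so assume $p>0$. With $B$ diagonal, the only term in~\eqref{bound-AB} that sees the off-diagonal entries of $A$ is $-\tr(A^2)\tr(B^2)$, through $\tr(A^2)=\sum_i A_{ii}^2+2\sum_{i<j}A_{ij}^2$; as this term is negative and $\tr(B^2)>0$, replacing $A$ by its diagonal part can only increase the left-hand side, and keeps $\tr A$ fixed. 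So it is enough to treat $A=\operatorname{diag}(a_1,a_2,a_3)$ with $\sum_i a_i=p$.

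\emph{Step 2: the reduced inequality and its critical point.} Writing $c_3=\sum_i b_i^3$, the claim becomes $F(a)\le 0$, where
\begin{equation*}
F(a):=\Bigl(\sum\nolimits_i a_ib_i\Bigr)^{2}-4c_3\sum\nolimits_i a_ib_i+3c_3^{2}+4p\sum\nolimits_i a_ib_i^{2}-p\sum\nolimits_i a_i^{2}-3p\sum\nolimits_i b_i^{4}-\tfrac16 p^{3}.
\end{equation*}
At this point I would record the identity $\sum_i b_i^{4}=\tfrac12\bigl(\sum_i b_i^{2}\bigr)^{2}=\tfrac12 p^{2}$, valid because $n=3$ and $\sum_i b_i=0$ (Newton's identities, or eliminate $b_3$). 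Now $F$ is quadratic in $a=(a_1,a_2,a_3)$ with constant Hessian $H_{k\ell}=2b_kb_\ell-2p\,\delta_{k\ell}$, which is negative semidefinite by Cauchy--Schwarz; hence $F$ is concave on the plane $\{\sum_i a_i=p\}$. A Lagrange multiplier for that constraint locates the critical point at $a_i^{\ast}=2b_i^{2}-\tfrac{p}{3}$ (note $\sum_i a_i^{\ast}=2p-p=p$), at which $\sum_i a_i^{\ast}b_i=2c_3$, $\sum_i a_i^{\ast}b_i^{2}=\tfrac23 p^{2}$ and $\sum_i(a_i^{\ast})^{2}=p^{2}$, using the identity above.

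\emph{Step 3: evaluation and conclusion.} Substituting these values, I expect the $p^{3}$-terms in $F(a^{\ast})$ to cancel exactly and the remainder to collapse to $F(a^{\ast})=-c_3^{2}=-(\tr B^{3})^{2}\le 0$. To finish for an arbitrary admissible $a$, write $a=a^{\ast}+r$ with $\sum_i r_i=0$; since $F$ is exactly quadratic,
\begin{equation*}
F(a)=F(a^{\ast})+\nabla F(a^{\ast})\cdot r+\Bigl(\sum\nolimits_i r_ib_i\Bigr)^{2}-p\sum\nolimits_i r_i^{2}.
\end{equation*}
One checks $\nabla F(a^{\ast})=\tfrac{2p^{2}}{3}(1,1,1)$, so that $\nabla F(a^{\ast})\cdot r=0$, while the last two terms are $\le 0$ by Cauchy--Schwarz. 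Hence $F(a)\le F(a^{\ast})\le 0$, which is~\eqref{bound-AB}; in fact this argument gives the sharper right-hand side $\tfrac16(\tr B^{2})^{3}-(\tr B^{3})^{2}$.

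I expect the only genuine obstacle to be the bookkeeping in Steps~2--3: running the Lagrange-multiplier computation and then checking, after inserting $\sum_i b_i^{4}=\tfrac12 p^{2}$, that the cubic-in-$p$ contributions to $F(a^{\ast})$ cancel and the $c_3$-contributions reduce to $-c_3^{2}$. This is also precisely where the hypothesis $\tr A=\tr B^{2}$ enters essentially---without it the optimal constant would exceed $\tfrac16$.
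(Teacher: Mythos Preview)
Your argument is correct. The reductions in Step~1 (diagonalise $B$, drop the off-diagonal part of $A$) and the Newton-identity $\sum_i b_i^4=\tfrac12 p^2$ are valid, the Hessian is negative semidefinite as stated, and the evaluation $F(a^\ast)=-c_3^{2}$ checks out. One small point you might make explicit: the restricted Hessian has the null direction $b=(b_1,b_2,b_3)$, so the critical set is a line rather than a single point; but $F$ is constant along this line (as your Step~3 expansion shows), so picking the representative $a_i^\ast=2b_i^2-\tfrac{p}{3}$ is harmless.

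The paper proceeds differently. Instead of optimising over $A$, it substitutes $\tilde{A}=A-B^2$ (so that $\tr\tilde{A}=0$), reduces to diagonal $\tilde{A}$, and then exhibits an explicit sum-of-squares decomposition after eliminating $x_3=-x_1-x_2$ and $y_3=-y_1-y_2$: the difference (right minus left) becomes $3\bigl[(x_1y_2-x_2y_1)-\tfrac13(y_1+2y_2)(2y_1+y_2)(y_1-y_2)\bigr]^2+9y_1^2y_2^2(y_1+y_2)^2$. That is short once written down but somewhat opaque. Your Lagrange-multiplier route is more conceptual: it explains \emph{why} $\tfrac16$ is the right constant (the $p^3$-terms cancel exactly at the maximiser because $\tr A=\tr B^2$) and yields the same sharpening $\tfrac16(\tr B^2)^3-(\tr B^3)^2$ on the right---note that for traceless $3\times3$ $B$ one has $\tr B^3=3\det B$, so your residual $-c_3^2$ agrees with the paper's term $9y_1^2y_2^2(y_1+y_2)^2$. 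The paper's version has the advantage of certifying the inequality by an explicit identity; yours has the advantage of being discoverable without guessing that identity.
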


\begin{proof}
Since the trace is invariant under conjugation, we can assume that $B$ is diagonal. 
Define $\tilde{A}=A-B^2$, then~\eqref{bound-AB} is equivalent to 
\begin{equation}
\left(\tr (\tilde{A} B)\right)^2
- 
2\tr (\tilde{A}B) \tr (B^3) 
+ 
2\tr(\tilde{A}B^2)\tr (B^2)
-
\tr (\tilde{A}^2) \tr( B^2 )
\leq 
\frac{1}{6}\left(\tr (B^2)\right)^3
\end{equation}
provided that $\tr (\tilde{A})=\tr (B)=0$. 

Write $\tilde{A}=\tilde{A}_d+\tilde{A}_o$ where $\tilde{A}_d$ is the diagonal part of $\tilde{A}$ and $\tilde{A}_o$ is the off-diagonal part of $\tilde{A}$; then the above inequality is equivalent to 
\begin{equation}
\begin{split}
\left(\tr (\tilde{A}_d B)\right)^2
 - 
2\tr (\tilde{A}_d B )\tr( B^3 )
+ 
2\tr( \tilde{A}_d B^2)\tr(B^2)\\
\leq 
\left( \tr (\tilde{A}_d^2) + \tr(\tilde{A}_o^2)\right) \tr \left( B^2\right) 
+ 
\frac{1}{6}\left(\tr \left(B^2\right)\right)^3
\end{split}
\end{equation}
provided that  $\tr(\tilde{A}^d)=\tr(B)=0$. By \cite[(3.15)]{HWY}, we actually have that this holds without the need for the $\tr(A_o^2)$ term on the right-hand side:
\begin{equation}
\label{eqn:elementary}
\left(\tr (\tilde{A}_d B)\right)^2
 - 
2\tr (\tilde{A}_d B )\tr( B^3 )
+ 
2\tr( \tilde{A}_d B^2)\tr(B^2)
\leq 
\tr (\tilde{A}_d^2) \tr \left(B^2\right) 
+ 
\frac{1}{6}\left(\tr \left( B^2\right)\right)^3.
\end{equation}
We can also work out this elementary inequality directly. Suppose $\tilde{A}_d= \text{diag}\left(x_1,x_2,x_3\right)$ and $B=\text{diag}\left(y_1,y_2,y_3\right)$, then the right-hand side minus the left-hand side of \eqref{eqn:elementary} can be simplified to 
\begin{align*}
& \left(x_1^2+x_2^2+x_3^2\right)\left(y_1^2+y_2^2+y_3^2\right)
+ 
\frac{1}{6}\left(y_1^2+y_2^2+y_3^2\right)^3\\
&\quad - 
\left(x_1y_1+x_2y_2+x_3y_3\right)^2 
+ 
2\left(x_1y_1+x_2y_2+x_3y_3\right)\left(y_1^3+y_2^3+y_3^3\right)\\
& \qquad - 
2\left(x_1y_1^2+x_2y_2^2+x_3y_3^2\right)\left(y_1^2+y_2^2+y_3^2\right)\\
& = 
3\left[
(x_1y_2-x_2y_1)
- 
\frac{1}{3}(y_1+2y_2)(2y_1+y_2)(y_1-y_2)
\right]^2 
+ 
9 y_1^2y_2^2(y_1+y_2)^2. 
\end{align*}
This last expression is manifestly non-negative and so finishes the proof the lemma.
\end{proof}

\begin{theorem}
Let $\underline{\omega}$ be a $\T^3$-invariant hypersymplectic structure on $\T^4$ in symmetric normal form. Then the hypersymplectic flow starting at $\underline{\omega}$ exists for all time.
\label{long-time-existence}
\end{theorem}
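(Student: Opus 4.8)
The plan is to combine the two main estimates already established in this section with the extension criterion of Fine--Yao. From Theorem~\ref{T-bounded}, the spatial maximum $\overline{\mathcal{T}}(t)$ satisfies, in the barrier sense, $\frac{\diff \overline{\mathcal{T}}}{\diff t} \leq -\frac{1}{3}\overline{\mathcal{T}}^2$, and hence
\[
\overline{\mathcal{T}}(t) \leq \frac{\overline{\mathcal{T}}(0)}{1 + \frac{1}{3}\overline{\mathcal{T}}(0)\,t} \leq \overline{\mathcal{T}}(0)
\]
for all $t$ in the interval of existence. In particular, $\mathcal{T}$ is uniformly bounded on $\T^4 \times [0,t_0)$ by the fixed constant $\overline{\mathcal{T}}(0)$, which depends only on the initial data and not on $t_0$.

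Next I would argue by contradiction. Suppose the flow does not exist for all time, and let $t_0 < \infty$ be the maximal existence time of the solution $\underline{\omega}(t)$ starting at $\underline{\omega}$. Bryant--Xu's short-time existence result (invoked via the $G_2$-Laplacian flow, as recalled in the introduction) guarantees a solution exists on some maximal interval $[0,t_0)$ and that if $t_0 < \infty$ the flow cannot be extended past $t_0$. But the bound just obtained shows that $\cT = 2|\mathbf{T}|^2$ is bounded on $\T^4 \times [0,t_0)$. By Theorem~\ref{FY-extension} (Fine--Yao's extension criterion), the flow then extends past $t = t_0$, contradicting the maximality of $t_0$. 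Hence $t_0 = \infty$ and the flow exists for all time.

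The only point requiring a little care is that Theorem~\ref{FY-extension} is stated for a solution on $X \times [0,s)$ with $X$ compact and with $\cT$ bounded on that time slab; here $X = \T^4$ is compact and we have exactly such a uniform bound, so the hypotheses are met verbatim. No genuine obstacle remains at this stage: all the analytic work has already been done in Theorem~\ref{T-bounded}, whose proof rests on the heat inequality of Proposition~\ref{heat-equation-T} together with the elementary matrix inequality of Lemma~\ref{lem:linear-algebra-inequality}. Thus the proof is a short assembly of these ingredients.

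\begin{proof}
By Theorem~\ref{T-bounded}, along the flow we have $\overline{\mathcal{T}}(t) \leq \overline{\mathcal{T}}(0)$, so $\cT$ is bounded on $\T^4 \times [0,t_0)$ for any $t_0 \leq \infty$ up to which the flow exists, by a constant depending only on $\underline{\omega}$. Suppose for contradiction that the maximal existence time $t_0$ is finite. Since $\T^4$ is compact and $\cT$ is bounded on $\T^4 \times [0,t_0)$, Theorem~\ref{FY-extension} applies and the flow extends to a solution past $t = t_0$, contradicting maximality. Therefore $t_0 = \infty$, i.e.\ the flow exists for all time.
\end{proof}
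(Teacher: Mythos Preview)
Your proof is correct and follows essentially the same approach as the paper: invoke Theorem~\ref{T-bounded} to obtain a uniform bound on $\mathcal{T}$, then apply the Fine--Yao extension criterion (Theorem~\ref{FY-extension}) to conclude the flow cannot terminate in finite time. The paper's version is simply more terse, omitting the explicit contradiction framing and the reference to Bryant--Xu.
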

\begin{proof}
By Theorem~\ref{T-bounded}, $\mathcal{T}$ is bounded along the flow and so the flow exists for all time, by the main result of~\cite{FY}.
\end{proof}

\section{Convergence at infinity}

We now show that, modulo diffeomorphisms, the hypersymplectic flow $\underline{\omega}(t)$ converges as $t \to \infty$ to a hyperk\"ahler triple. The overall idea of the argument here is similar to that of~\cite{HWY}. Accordingly we focus more on the parts which are different in our situation.

Recall from Lemma~\ref{metric-when-symmetric} that the corresponding metrics $g_{\underline{\omega}(t)}$ have the form
\[
g_{\underline{\omega}(t)}
	=
		V^2(x_0,t) \diff x_0^2 + Q_{ij}(x_0,t) \diff x_i \diff x_j.
\]
Proving convergence of the metrics amounts to proving convergence of the functions $V$ and $Q_{ij}$. At the same time, it is more straightforward to control geometric quantities, such as curvature and, for a metric of this form, the relation between curvature and $V$ and $Q_{ij}$ is complicated. To get around this, we use a diffeomorphism which puts the metric in a more manageable form.  

For each fixed $t$, we introduce a new coordinate system $(y,x_1,x_2,x_3)$ replacing $x_0$ by $y = y(x_0)$ where
\[
\frac{\diff y}{\diff x_0} = \frac{2\pi}{v_t} V(x_0,t),\qquad y(0)=0,
\]
for $v_t = \int_0^{2\pi} V(x_0,t)\diff x_0$ a $t$-dependent constant. For each $t$ this gives us a diffeomorphism of $\T^1 = \R/2\pi\Z$:
\[
G_t^{-1} \colon \T^1 \to \T^1,\qquad
G_t^{-1}(x_0) = \frac{2\pi}{v_t} \int_{0}^{x_0} V(\xi,t)\diff \xi.
\]
We use the same notation, $G_t^{-1}$, to denote the diffeomorphism of $\T^4$ given by $(x_0,x_1,x_2,x_3)\mapsto (y,x_1,x_2,x_3)$. In the new coordinates, $(y,x_1,x_2,x_3)$, 
\begin{equation}
g_{\underline{\omega}(t)}
	=
		\left( \frac{v_t}{2\pi}\right)^2 \diff y^2 + \widehat{Q}_{ij}(y,t) \diff x_i \diff x_j,
\label{metric-in-new-coordinates}
\end{equation}
where $\widehat{Q}_{ij}(y,t) = Q_{ij}(G_t(y),t)$. 

Equivalently, we use \emph{fixed} coordinates $(y,x_1,x_2,x_3)$ on $\T^4$ and define a new path of hypersymplectic stuctures:
\[
\underline{\widehat{\omega}}(t) = G_t^*\underline{\omega}(t).
\]
Explicitly,
\[
\widehat{\omega}_i(t)
	=	
		\frac{v_t}{2\pi} \widehat{Q}_{ip}(y,t)\, \diff y \wedge \diff x_p + \frac{1}{2}\epsilon_{ijk}\diff x_{jk}.
\]
Now equation~\eqref{metric-in-new-coordinates} says that $g_{\underline{\widehat{\omega}}(t)} = G^*_t g_{\underline{\omega}(t)}$. We will prove the main result of this article, Theorem~\ref{main-result}, by showing that $\underline{\widehat{\omega}}(t)$ converges to a hyperk\"ahler triple as $t \to \infty$. More precisely,

\begin{theorem}\label{convergence}
As $t \to \infty$, the matrix-valued functions $\widehat{Q}_{ij}(y,t) \to \widehat{Q}^\infty_{ij}$ converge in $C^{\infty}$ to a constant positive-definite matrix. Moreover $v_t \to v_\infty$ also converges. It follows that the hypersymplectic structures $\underline{\widehat{\omega}}(t)$ converge in $C^{\infty}$ to the hyperk\"ahler structure
\[
\widehat{\omega}_i^\infty
	=
		\frac{v_\infty}{2\pi} \widehat{Q}^\infty_{ip}\,\diff y \wedge \diff x_p
		+ 
		\frac{1}{2} \epsilon_{ijk} \diff x_{jk}
\]
inducing the flat metric
\[
\widehat{g}^\infty
	=
		\left( \frac{v_\infty}{2\pi} \right)^2 \diff y^2 + \widehat{Q}^\infty_{ij}\, \diff x_i \diff x_j.
\]
\end{theorem}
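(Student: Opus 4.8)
The plan is to transfer the question to the evolution of the matrix-valued function $\widehat{Q}(y,t)$ and to exploit two features of the $y$-coordinate: the reparametrisation $G_t$ is engineered precisely so that the induced flow equation becomes \emph{uniformly parabolic with constant-coefficient principal part} $c_t\,\del_y^2$, where $c_t:=(2\pi/v_t)^2$; and the cohomology classes $[\widehat{\omega}_i(t)]$ give an \emph{exact conservation law} which already identifies the limit. The hard part will be the uniform higher-derivative bounds on $\widehat Q$, since the lower-order terms of the equation depend nonlinearly (and nonlocally) on $\widehat Q$ and $\del_y\widehat Q$.

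\emph{Convergence of $v_t$ and uniform bounds.} Since $\del_t\mu_{\underline{\omega}}=\tfrac13\mathcal{T}\mu_{\underline{\omega}}$ with $\mathcal{T}\ge0$, the volume $\Vol(\T^4,g_{\underline{\omega}(t)})=(2\pi)^3v_t$ is nondecreasing, while it is bounded above by the cohomological quantity $\tfrac16\int_{\T^4}(\omega_1^2+\omega_2^2+\omega_3^2)$, which is constant along the flow; hence $v_t\nearrow v_\infty\in(0,\infty)$, the constant $c_t$ stays in a fixed interval $[c_-,c_+]\subset(0,\infty)$, and $c_t\to c_\infty:=(2\pi/v_\infty)^2$. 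Because $\del_tV=\tfrac13\mathcal{T}V\ge0$, the function $V$ is also pointwise nondecreasing, so $V\ge\min_{\T^1}V(\cdot,0)>0$ uniformly. Finally, from $\det Q\equiv1$ and the bound $\tr Q(\cdot,t)\le M:=\max_{\T^1}\tr Q(\cdot,0)$ established earlier, the matrices $\widehat Q(y,t)$ have eigenvalues confined to $[M^{-2},M]$, so $\widehat Q$ and $\widehat Q^{-1}$ are uniformly bounded.

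\emph{The equation for $\widehat Q$ and the decay of $\del_y\widehat Q$.} A short computation from $\Delta=V^{-1}(V^{-1}\,\cdot\,')'$ shows that in the $y$-coordinate the Laplacian of a $\T^3$-invariant function is $c_t\,\del_y^2$. Differentiating $\widehat Q(y,t)=Q(G_t(y),t)$ in $t$ and substituting $\del_tQ=\Delta Q-\tfrac13\mathcal{T}Q$, $\del_tV=\tfrac13\mathcal{T}V$ and the definition of $G_t$ yields
\[
\del_t\widehat Q=c_t\,\del_y^2\widehat Q-\tfrac13\,\widehat{\mathcal{T}}\,\widehat Q+b(y,t)\,\del_y\widehat Q,
\]
where $\widehat{\mathcal{T}}(y,t)=\mathcal{T}(G_t(y),t)=c_t\,\tr\big((\widehat Q^{-1/2}\del_y\widehat Q\,\widehat Q^{-1/2})^2\big)\ge0$, and the transport term produced by $\del_tG_t$ obeys $|b(y,t)|\le\tfrac{4\pi}{3}\overline{\mathcal{T}}(t)$. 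By Theorem~\ref{T-bounded}, $\overline{\mathcal{T}}(t)\le\frac{\overline{\mathcal{T}}(0)}{1+\frac13\overline{\mathcal{T}}(0)t}\to0$, so $\widehat{\mathcal{T}}$ and $b$ are uniformly bounded and tend to $0$ uniformly; in particular the formula for $\widehat{\mathcal{T}}$ together with the bounds on $\widehat Q,\widehat Q^{-1},c_t$ forces $\|\del_y\widehat Q(\cdot,t)\|_{C^0}\to0$.

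\emph{Identifying and reaching the limit, and the main obstacle.} The map $G_t$ sends the $(y,x_j)$-torus diffeomorphically, with degree one, onto the $(x_0,x_j)$-torus, so $\int_{\T^2_{y,x_j}}\widehat{\omega}_i(t)=\int_{\T^2_{x_0,x_j}}\omega_i(t)=\int_{\T^2_{x_0,x_j}}\omega_i(0)$ is constant in $t$ (the flow preserving cohomology). Reading off both sides gives the exact identity $\int_0^{2\pi}\widehat Q_{ij}(y,t)\,\diff y=\tfrac{(2\pi)^2}{v_t}\bar\alpha_{ij}$, with $\bar\alpha_{ij}=\tfrac1{2\pi}\int_0^{2\pi}\alpha_{ij}(x_0,0)\,\diff x_0$. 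Combining this with $\|\del_y\widehat Q\|_{C^0}\to0$ and $v_t\to v_\infty$ yields $\widehat Q(\cdot,t)\to\widehat Q^\infty:=\tfrac{2\pi}{v_\infty}\bar\alpha$ in $C^0$; since $\bar\alpha$ is an average of positive-definite matrices and $\det\widehat Q^\infty=\lim\det\widehat Q=1$, the limit $\widehat Q^\infty$ is a constant positive-definite matrix of determinant one. To upgrade to $C^\infty$ convergence I would use parabolic smoothing (Shi-type / Schauder estimates) for the uniformly parabolic equation above, whose coefficients are controlled by $\widehat Q,\widehat Q^{-1},\del_y\widehat Q$: a standard bootstrap produces uniform bounds $\|\widehat Q(\cdot,t)\|_{C^k(\T^1)}\le C_k$ for every $k$ and all $t\ge1$, and interpolating these against the $C^0$ convergence gives $\widehat Q(\cdot,t)\to\widehat Q^\infty$ in $C^\infty$. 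With $v_t\to v_\infty$ it then follows that $\widehat{\omega}_i(t)\to\widehat{\omega}^\infty_i=\bar\alpha_{ij}\,\diff y\wedge\diff x_j+\tfrac12\epsilon_{ijk}\,\diff x_{jk}$ in $C^\infty$; the identity $\widehat{\omega}^\infty_i\wedge\widehat{\omega}^\infty_j=2\bar\alpha_{ij}\,\diff y\wedge\diff x_{123}$ shows this triple is hypersymplectic in symmetric normal form with \emph{constant} coefficient matrix, so (as recalled in the introduction) its matrix $Q$ is constant, $g_{\widehat{\omega}^\infty}$ is hyperk\"ahler, and being a constant-coefficient metric on $\T^4$ it is flat. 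The soft parts here—monotone convergence of $v_t$, the decay of $\del_y\widehat Q$, and the conservation law fixing the limit—are routine; the genuinely delicate point is the uniform higher-order control of $\widehat Q$, because even though $G_t$ restores parabolicity, the coefficients $\widehat{\mathcal{T}}$ and $b$ depend nonlinearly, and in the case of $b$ nonlocally, on $\widehat Q$ and $\del_y\widehat Q$, so the bootstrap must be carried out carefully, following the template of \cite{HWY}.
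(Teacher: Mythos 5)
Your proposal is correct in outline and its ``soft'' parts (monotone convergence of $v_t$, the uniform two-sided bounds on $\widehat Q$, the decay of $\del_y\widehat Q$ from Theorem~\ref{T-bounded}, and the cohomological identification of the limit) coincide with the paper's, but you take a genuinely different route to the decisive step, the uniform $C^k$ bounds on $\widehat Q$. Your key structural observation --- that in the $y$-coordinate the Laplacian becomes the spatially constant operator $c_t\,\del_y^2$, so that $\widehat Q$ satisfies a \emph{uniformly parabolic} system $\del_t\widehat Q=c_t\del_y^2\widehat Q-\tfrac13\widehat{\mathcal T}\widehat Q+b\,\del_y\widehat Q$ with $b$ a bounded nonlocal drift of size $O(\overline{\mathcal T})$ --- is correct, and since $\widehat Q$, $\widehat Q^{-1}$ and $\del_y\widehat Q$ are already uniformly controlled, an interior-in-time parabolic bootstrap on unit time intervals should indeed close (the coefficients $\widehat{\mathcal T}$ and $b$ gain regularity along with $\del_y\widehat Q$ at each stage, $b$ being an antiderivative of $\widehat{\mathcal T}$). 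This is in the spirit of \cite{HWY} and is arguably more elementary for this one-dimensional problem; its cost is that the bootstrap, which you only assert, must be carried out with care for the nonlocal coupling to $v_t$ and for uniformity in $t$. The paper avoids this entirely and instead proves the higher-order bounds geometrically: Lemma~\ref{Rm-bounds-Q} converts $C^k$ bounds on $\Rm(g_{\underline{\widehat\omega}(t)})$ into $C^{k+2}$ bounds on $\widehat Q$, and Proposition~\ref{Lambda-bounded} establishes the curvature bound by contradiction, via a non-collapsing estimate, the Cheeger--Gromov--Taylor injectivity radius bound \cite{CGT}, the Lotay--Wei Shi-type estimates \cite{LW1} applied on the associated $7$-manifold, a pointed Cheeger--Gromov limit which is hyperk\"ahler, and the Cheeger--Gromoll splitting theorem \cite{CG} to force the limit to be flat. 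What the paper's approach buys is that all the hard analysis is outsourced to established geometric compactness machinery; what yours would buy is a self-contained PDE argument that never leaves dimension one --- but to count as a proof you would need to actually run the Schauder/De Giorgi--Nash bootstrap, including verifying the parabolic H\"older regularity in time of $c_t$ and $b$, which is precisely the ``delicate point'' you flag but do not resolve.
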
 

Theorem~\ref{main-result} follows immediately. Whilst the diffeomorphisms $G_t$ of Theorem~\ref{convergence} do not start at the identity, replacing them with $G_0^{-1} \circ G_t$ gives a path of diffeomorphisms which does start at the identity and for which the conclusions of Theorem~\ref{main-result} are true. The proof of Theorem~\ref{convergence} takes up the remainder of this section.

Notice that $\underline{\widehat{\omega}}(t)$ is in symmetric normal form for all $t$ with respect to the coordinate system $(y,x_1,x_2,x_3)$, so our previous results for such structures apply directly to $\underline{\widehat{\omega}}(t)$.  

\begin{lemma}
$v_t$ is non-decreasing, bounded above and hence converges to a limit $v_\infty$ as $t\to \infty$.
\end{lemma}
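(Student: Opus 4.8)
The plan is to establish the two assertions — monotonicity and an upper bound — separately, and then invoke monotone convergence. Note first that by Theorem~\ref{long-time-existence} the flow, hence $v_t$, is defined for all $t\in[0,\infty)$, and everything in sight is smooth in $t$, so we may differentiate under the integral sign.

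\textbf{Monotonicity.} I would compute $\frac{\diff}{\diff t} v_t = \int_0^{2\pi} \del_t V(x_0,t)\,\diff x_0$ and plug in the evolution equation~\eqref{dV/dt}, $\del_t V = \frac{1}{3}\mathcal{T} V$. Since $V>0$ and $\mathcal{T} = Q^{ij}\langle\tau_i,\tau_j\rangle$ is non-negative — it equals $V^{-2}\tr\big((Q^{-1/2}Q'Q^{-1/2})^2\big)$, twice the squared norm of the torsion — the integrand is pointwise non-negative, so $\frac{\diff}{\diff t} v_t \geq 0$ and $v_t$ is non-decreasing.

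\textbf{Upper bound.} The key point is that $v_t$ is, up to a fixed constant, the Riemannian volume of $(\T^4,g_{\underline{\omega}(t)})$. Indeed, by part~\ref{volume} of Lemma~\ref{Q-V-formulae} we have $\mu_{\underline{\omega}(t)} = V\,\diff x_{0123}$, so $\Vol(\T^4,g_{\underline{\omega}(t)}) = \int_{\T^4} V\,\diff x_{0123} = (2\pi)^3 v_t$. On the other hand, as recalled in \S\ref{hypersymplectic-flow-recap}, the volume of any hypersymplectic $4$-manifold obeys the topological bound $\Vol(X,g_{\underline{\omega}}) \leq \frac{1}{6}\int_X \omega_1\wedge\omega_1 + \omega_2\wedge\omega_2 + \omega_3\wedge\omega_3$, whose right-hand side depends only on the classes $[\omega_i]$. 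Since $[\underline{\omega}(t)]$ is constant along the flow, this gives $v_t \leq \frac{1}{6(2\pi)^3}\sum_{i=1}^3 \int_X [\omega_i]^2 =: v^\ast$, a finite constant independent of $t$.

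\textbf{Conclusion.} A non-decreasing function on $[0,\infty)$ bounded above by $v^\ast$ converges as $t\to\infty$; we call the limit $v_\infty$. There is no real obstacle here — this is a bookkeeping lemma — the only point needing a little care is matching the normalisation constants relating $v_t$, the $4$-dimensional volume, and the cohomological ceiling; one could alternatively bypass the explicit topological inequality by invoking Hitchin's observation that the $G_2$-volume $(2\pi)^6 v_t$ is non-decreasing along the $G_2$-Laplacian flow with the same topological upper bound, but the direct argument above is self-contained given the results already established.
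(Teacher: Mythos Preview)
Your proof is correct and follows essentially the same approach as the paper: both identify $(2\pi)^3 v_t$ with $\Vol(\T^4,g_{\underline{\omega}(t)})$ and invoke the topological volume bound for the upper bound, while monotonicity in both cases reduces to $\del_t V = \tfrac{1}{3}\mathcal{T}V \geq 0$ (you integrate this directly, the paper quotes the equivalent general fact that volume is non-decreasing along the hypersymplectic flow).
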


\begin{proof}
By Lemma~\ref{Q-V-formulae} (or directly from~\eqref{metric-in-new-coordinates}, since $\det \widehat{Q} = 1$), the volume form of $g_{\underline{\widehat{\omega}}(t)}$ is
\[
\mu_{\underline{\widehat{\omega}}(t)}
=
\frac{v_t}{2\pi} \diff y \wedge \diff x_{123}.
\]
This implies that 
\[
\Vol(\T^4, g_{\underline{\widehat{\omega}}(t)})
=
(2\pi)^3 v_t.
\]
Meanwhile, since $g_{\underline{\widehat{\omega}}(t)} = G_t^*g_{\underline{\omega}(t)}$ we have that $\Vol(\T^4, g_{\underline{\widehat{\omega}}(t)}) =\Vol(\T^4, g_{\underline{\omega}(t)})$. Now this second quantity is non-decreasing, since $\underline{\omega}(t)$ solves the hypersymplectic flow, and is bounded above, hence it converges.
\end{proof}

\begin{lemma}
The metrics $g_{\widehat{\omega}(t)}$ are uniformly equivalent to the fixed reference metric $g_0 = \diff y^2 + \diff x_1^2 + \diff x_2^2 + \diff x_3^2$.
\end{lemma}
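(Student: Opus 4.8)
The plan is to read both inequalities off the explicit expression~\eqref{metric-in-new-coordinates},
\[
g_{\widehat{\omega}(t)} = \left(\frac{v_t}{2\pi}\right)^{2}\diff y^{2} + \widehat{Q}_{ij}(y,t)\,\diff x_i\,\diff x_j,
\]
controlling the $\diff y^{2}$-block and the spatial block $\widehat{Q}_{ij}$ separately and with $t$-independent constants.

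For the $\diff y^{2}$-block I would invoke the previous lemma: $v_t$ is non-decreasing and bounded above, so $v_0\le v_t\le v_\infty$ for all $t$. Since $V=(\det\alpha)^{1/3}>0$, the constant $v_0=\int_0^{2\pi}V(\xi,0)\,\diff\xi$ is strictly positive, so the coefficient $(v_t/2\pi)^2$ is pinched between the two positive constants $(v_0/2\pi)^2$ and $(v_\infty/2\pi)^2$, uniformly in $t$.

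For the spatial block the key input is the a priori trace bound~\eqref{trQ-bounded}, which was established for the genuine hypersymplectic flow $\underline{\omega}(t)$ with $M=\max_{x_0\in\T^1}\tr Q(x_0,0)$. Because $\widehat{Q}_{ij}(y,t)=Q_{ij}(G_t(y),t)$ is merely a $t$-dependent reparametrisation of the $x_0$-variable, the pointwise eigenvalues of the spatial block are unchanged, so $\tr\widehat{Q}(y,t)\le M$ for all $(y,t)$. Together with $\det\widehat{Q}\equiv1$ and $\widehat{Q}>0$, this gives, exactly as in~\eqref{Q-bounded}, the two-sided bound $M^{-2}|\xi|^{2}\le \xi^{T}\widehat{Q}(y,t)\,\xi\le M|\xi|^{2}$ for every $\xi\in\R^3$. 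Combining the two blocks yields $C^{-1}g_0\le g_{\widehat{\omega}(t)}\le C\,g_0$ with, say, $C=\max\{(v_\infty/2\pi)^{2},\,M,\,(2\pi/v_0)^{2},\,M^{2}\}$, which is independent of $t$.

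I do not expect a genuine obstacle here; the only point that requires a moment's care is observing that the trace bound~\eqref{trQ-bounded}, proved for the flow $\underline{\omega}(t)$ itself, transfers verbatim to $\widehat{Q}$ since the diffeomorphisms $G_t$ act only by reparametrising the $x_0$-circle and therefore preserve the eigenvalues of the $\diff x_i\,\diff x_j$-part of the metric.
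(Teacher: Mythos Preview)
Your proposal is correct and follows essentially the same route as the paper: both control the $\diff y^2$-coefficient via the convergence of $v_t$ to a positive limit, and transfer the two-sided bound~\eqref{Q-bounded} on $Q$ to $\widehat{Q}=G_t^*Q$ by observing that the reparametrisation leaves the eigenvalues unchanged. Your write-up is in fact a bit more explicit about the constants than the paper's own proof.
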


\begin{proof}
We already know that $v_t$ converges as $t \to \infty$ to a strictly positive limit. Moreover, \eqref{Q-bounded} shows that $0<c\leq Q \leq C$ uniformly, form some constants $c, C$. Hence the same is true for $\widehat{Q}= G_t^*Q$. Since these are the metric coefficients of $g_{\underline{\widehat{\omega}}(t)}$ in the coordinate system $y,x_i$ this gives the statement of the Lemma. 
\end{proof}

\begin{lemma}
We have $\widehat{Q}'_{ij} \to 0$ in $C^0$ as $t \to \infty$, where the prime denotes differentiation with respect to $y$.
\end{lemma}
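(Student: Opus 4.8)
The plan is to convert differentiation in the new variable $y$ back to differentiation in $x_0$, and then to bound everything by $\mathcal{T}$, which we already know decays uniformly to zero by Theorem~\ref{T-bounded}. Since $\frac{\diff y}{\diff x_0} = \frac{2\pi}{v_t}V(x_0,t)$, the chain rule gives, writing a prime on $\widehat{Q}$ for $\partial_y$ and a prime on $Q$ for $\partial_{x_0}$,
\[
\widehat{Q}'_{ij}(y,t) = \frac{v_t}{2\pi}\,V\big(G_t(y),t\big)^{-1}\,Q'_{ij}\big(G_t(y),t\big).
\]

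Next I would bound the right-hand side by $\mathcal{T}$. Recall from part~\ref{torsion-forms-in-coframe} of Lemma~\ref{some-coframe-formulae} and the formula for $\mathcal{T}$ used in Proposition~\ref{heat-equation-T} that $\mathcal{T} = V^{-2}Q'_{ik}Q^{kl}Q'_{jl}Q^{ij} = V^{-2}\tr\big(Q^{-1}Q'Q^{-1}Q'\big)$. Because $Q^{-1}$ is positive definite with $Q^{-1}\ge M^{-1}\Id$ (by~\eqref{Q-bounded}), this yields $V^{-2}\sum_{i,j}(Q'_{ij})^2 \le M^{2}\,\mathcal{T}$. Combining with the chain-rule identity above, and using that $v_t\le v_\infty$ is bounded (established in the lemma showing $v_t$ converges), we obtain the pointwise estimate
\[
\sum_{i,j}\big|\widehat{Q}'_{ij}(y,t)\big|^2 \;\le\; \Big(\tfrac{v_\infty}{2\pi}\Big)^{2}M^{2}\,\mathcal{T}\big(G_t(y),t\big) \;\le\; C\,\overline{\mathcal{T}}(t),
\]
where $\overline{\mathcal{T}}(t)=\max_{\T^1}\mathcal{T}(\cdot,t)$ and $C=(v_\infty/2\pi)^2M^2$. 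By Theorem~\ref{T-bounded}, $\overline{\mathcal{T}}(t)\le \overline{\mathcal{T}}(0)/\big(1+\tfrac13\overline{\mathcal{T}}(0)t\big)\to 0$ as $t\to\infty$, so $\sup_y|\widehat{Q}'_{ij}(\cdot,t)|\to 0$, which is the claim.

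There is no substantive obstacle here; the only points needing care are tracking the factors of $V$ and $v_t$ in the change of variables and invoking the uniform bounds already in hand. Equivalently, one may note that $\mathcal{T}=2|\mathbf{T}|^2$ is a natural scalar of the $G_2$-structure and hence diffeomorphism-invariant, so the corresponding quantity $\widehat{\mathcal{T}}$ for $\underline{\widehat{\omega}}(t)=G_t^*\underline{\omega}(t)$ satisfies $\widehat{\mathcal{T}}(y,t)=\mathcal{T}(G_t(y),t)$, whence $\max_y\widehat{\mathcal{T}}(\cdot,t)=\overline{\mathcal{T}}(t)\to0$; expanding $\widehat{\mathcal{T}}=(2\pi/v_t)^2\,\widehat{Q}'_{ik}\widehat{Q}^{kl}\widehat{Q}'_{jl}\widehat{Q}^{ij}$ and using~\eqref{Q-bounded} then gives the bound directly.
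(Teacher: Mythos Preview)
Your proof is correct and follows essentially the same approach as the paper: both arguments bound $\widehat{Q}'$ via the torsion quantity $\widehat{\mathcal T}=G_t^*\mathcal T$ together with the uniform two-sided bound~\eqref{Q-bounded} on $Q$ and the convergence of $v_t$, then invoke Theorem~\ref{T-bounded} to get $\mathcal T\to 0$ in $C^0$. Your ``equivalently'' paragraph is in fact precisely the paper's own argument.
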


\begin{proof}
The norm-squared $\widehat{\mathcal{T}}$ of the torsion of $\underline{\widehat{\omega}}(t)$ is given by 
\[
\widehat{\mathcal{T}}
=
\left( \frac{2\pi}{v_t}\right)^2 \widehat{Q}'_{ik} \widehat{Q}^{kl} \widehat{Q}'_{lj} \widehat{Q}^{ji},
\]
where the prime denotes differentiation with respect to $y$. (This is the same formula which holds for any hypersymplectic structure in symmetric normal form and ultimately comes from the computation of the torsion forms in the last part of Lemma~\ref{some-coframe-formulae}.) By  Theorem~\ref{T-bounded}, $\mathcal{T} \to 0$ in $C^0$ and so  $\widehat{\mathcal{T}} = G_t^* \mathcal{T}$ also converges to $0$ in $C^0$. Meanwhile~\eqref{Q-bounded} gives constants $c, C$ such that $0< c \leq \widehat{Q} = G_t^*Q \leq C$. Moreover, $v_t \to v_\infty >0$. This implies
\[
\widehat{\mathcal{T}} \geq C \tr\left(\widehat{Q}'^2\right) 
\]
for some constant $C$. Now since $\widehat{\mathcal{T}} \to 0$ in $C^0$ we see that $\tr (\widehat{Q}'^2) \to 0$ in $C^0$ and hence $\widehat{Q}' \to 0$ in $C^0$ as claimed.
\end{proof}

To complete the proof of Theorem~\ref{convergence} we will show that the higher derivatives of $Q_{ij}$ are all bounded. As the following Lemma shows, this amounts to proving uniform $C^{k}$ bounds on the curvature of $g_{\underline{\widehat{\omega}}}$. We write the components of the curvature as
\[
\widehat{R}_{yiy}^{\phantom{yiy}j} 
	= 
		g_{\underline{\widehat{\omega}}}\left( 
			\Rm_{g_{\underline{\widehat{\omega}}}}(\del_y, \del_{x_i})(\del_y), \del_{x_j}
		\right),
\]
and similarly for $\widehat R_{ijk}^{\phantom{ijk}l}$ and $\widehat{R}_{yij}^{\phantom{yij}k}$.

\begin{lemma}
The components of the Riemann curvature tensor of $g_{\underline{\widehat{\omega}}}$ are given by
\begin{eqnarray}
\widehat{R}_{yiy}^{\phantom{yiy}j}
	&=&
		\frac{1}{2} \widehat{Q}^{jk}\widehat{Q}''_{ik}
		-
		\frac{1}{4} \widehat{Q}^{jl}\widehat{Q}'_{lp}\widehat{Q}^{pk}\widehat{Q}'_{ki},
		\label{R-yiyj}\\
\widehat{R}_{ijk}^{\phantom{ijk}l}
	&=&
		\frac{1}{4} \left( \frac{2\pi}{v_t}\right)^2 
		\widehat{Q}^{lp}\left( 
		\widehat{Q}'_{ik}\widehat{Q}'_{jp}
		-
		\widehat{Q}'_{ip}\widehat{Q}'_{jk}
		\right),\\
\widehat{R}_{yij}^{\phantom{yij}k}
	&=&
		0.
\end{eqnarray}
\end{lemma}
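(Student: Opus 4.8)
The plan is to compute the Christoffel symbols of $g_{\underline{\widehat{\omega}}}$ directly in the coordinates $(y,x_1,x_2,x_3)$ and then substitute them into the coordinate formula for the Riemann tensor. Fix $t$, write a prime for $\del/\del y$, and set $c = v_t/(2\pi)$; then $g_{yy} = c^2$ is constant in $y$, $g_{ij} = \widehat{Q}_{ij}(y)$ depends on $y$ alone, and $g_{yi} = 0$. The metric is thus a generalised warped product over the circle $\T^1_y$ with flat $\T^3$-fibres, and the bulk of the work is simply keeping the indices straight.

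First I would record the Christoffel symbols. Since $g_{yy}$ is constant and $g_{ij}$ is independent of $x_1,x_2,x_3$, the formula $\Gamma^a_{bc} = \tfrac12 g^{ad}(\del_b g_{dc} + \del_c g_{db} - \del_d g_{bc})$ kills almost everything, leaving only
\[
\Gamma^y_{ij} = -\frac{1}{2c^2}\widehat{Q}'_{ij}, \qquad \Gamma^k_{iy} = \Gamma^k_{yi} = \frac12 \widehat{Q}^{kl}\widehat{Q}'_{li},
\]
where $\widehat{Q}^{kl}$ denotes the inverse matrix. I would also note the identity $(\widehat{Q}^{kl})' = -\widehat{Q}^{km}\widehat{Q}'_{mn}\widehat{Q}^{nl}$, obtained by differentiating $\widehat{Q}^{km}\widehat{Q}_{ml} = \delta^k_l$; this is what lets $\del_y\Gamma^j_{iy}$ be expressed without derivatives of $\widehat{Q}^{-1}$.

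Next I would substitute these into $R_{abc}{}^{d} = \del_a\Gamma^d_{bc} - \del_b\Gamma^d_{ac} + \Gamma^d_{ae}\Gamma^e_{bc} - \Gamma^d_{be}\Gamma^e_{ac}$ (with the convention $\Rm(\del_a,\del_b)\del_c = R_{abc}{}^{d}\del_d$), one component at a time. For $\widehat{R}_{yiy}{}^{j}$: the term $\del_i\Gamma^j_{yy}$ vanishes, $\del_y\Gamma^j_{iy}$ contributes $\tfrac12\widehat{Q}^{jk}\widehat{Q}''_{ik}$ together with a $-\tfrac12(\widehat{Q}^{-1}\widehat{Q}'\widehat{Q}^{-1}\widehat{Q}')$-type term via the inverse-derivative identity, and the one surviving quadratic term $\Gamma^k_{iy}\Gamma^j_{yk}$ contributes $+\tfrac14(\widehat{Q}^{-1}\widehat{Q}'\widehat{Q}^{-1}\widehat{Q}')$; summing and using the symmetry of $\widehat{Q}$ and $\widehat{Q}'$ yields \eqref{R-yiyj}. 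For $\widehat{R}_{ijk}{}^{l}$: both $\del\Gamma$ terms vanish because the purely spatial Christoffel symbols are zero, and among the quadratic terms only $\Gamma^y_{jk}\Gamma^l_{iy}$ and $\Gamma^y_{ik}\Gamma^l_{jy}$ are nonzero; after using $(\widehat{Q}')^T = \widehat{Q}'$ these assemble into $\tfrac{1}{4c^2}\widehat{Q}^{lp}(\widehat{Q}'_{ik}\widehat{Q}'_{jp} - \widehat{Q}'_{ip}\widehat{Q}'_{jk})$, which is the stated formula since $1/c^2 = (2\pi/v_t)^2$. For $\widehat{R}_{yij}{}^{k}$: $\del_y\Gamma^k_{ij} = 0$ and $\del_i\Gamma^k_{yj} = 0$ (the first symbol is zero, the second is $x$-independent), while the quadratic terms $\Gamma^k_{ye}\Gamma^e_{ij}$ and $\Gamma^k_{ie}\Gamma^e_{yj}$ vanish because they would require $\Gamma^k_{yy}$ or a purely spatial $\Gamma$, all of which are zero; hence $\widehat{R}_{yij}{}^{k} = 0$.

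I do not expect a genuine obstacle here: the statement is a routine curvature computation for a cohomogeneity-one metric. The only points that require care are fixing a sign convention for $\Rm$ and using it consistently, and correctly differentiating the inverse matrix $\widehat{Q}^{-1}$; everything else is bookkeeping. It is also worth checking at the end that the remaining curvature components --- those with, say, two $y$-indices among the first three slots --- are forced to vanish or are determined by these three via the usual algebraic symmetries of the Riemann tensor.
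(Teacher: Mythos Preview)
Your proposal is correct and is precisely the approach the paper takes: the paper's proof simply reads ``This is a direct calculation from~\eqref{metric-in-new-coordinates}, using standard formulae for the components of the curvature tensor in local coordinates. We suppress the details.'' You have carried out exactly that suppressed calculation, with the Christoffel symbols and the three curvature components worked out correctly.
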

\begin{proof}
This is a direct calculation from~\eqref{metric-in-new-coordinates}, using standard formulae for the components of the curvature tensor in local coordiantes. We suppress the details. 
\end{proof}

We make two quick remarks: firstly, we will only use~\eqref{R-yiyj} in what follows, the other components are recorded for completeness; secondly, there is a mistake in equation~(3.3) of~\cite{HWY}, where in an analogous calculation it is erroneously claimed that $R_{jii}^{\phantom{jii}j}=0$. This is ultimately harmless to the overall arguments there.

\begin{lemma}\label{Rm-bounds-Q}
For each $k$ there is a constant $M_k$ such that
\[
\| \widehat{Q}\|_{C^{k+2}(g_0)}
	\leq 
		M_k\left( 
		1  + 
		\left\|
			\Rm(g_{\underline{\widehat{\omega}}(t)})
		\right\|_{C^{k}(g_{\underline{\widehat{\omega}}(t)})}
		\right).		
\]
\end{lemma}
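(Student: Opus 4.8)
The plan is to induct on $k$ and bootstrap through the curvature formula \eqref{R-yiyj}, using that we already have uniform two-sided bounds $0 < c \le \widehat Q \le C$, a uniform bound on $v_t$ from below and above, and $\widehat Q' \to 0$ in $C^0$. The base case $k=0$ is the crux: from \eqref{R-yiyj} we read off
\[
\widehat Q^{jk}\widehat Q''_{ik}
=
2\widehat R_{yiy}^{\phantom{yiy}j}
+
\tfrac{1}{2}\widehat Q^{jl}\widehat Q'_{lp}\widehat Q^{pk}\widehat Q'_{ki}.
\]
Multiplying through by $\widehat Q$ (which is uniformly bounded in both directions), the left-hand side controls $\widehat Q''$ pointwise; the last term on the right is bounded by $\|\widehat Q'\|_{C^0}^2$ times a constant, and $\widehat R_{yiy}^{\phantom{yiy}j}$ is controlled by $\|\Rm(g_{\underline{\widehat\omega}})\|_{C^0(g_{\underline{\widehat\omega}})}$ up to a constant coming from the equivalence of $g_{\underline{\widehat\omega}(t)}$ with $g_0$. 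This gives $\|\widehat Q''\|_{C^0(g_0)} \le M_0(1 + \|\Rm\|_{C^0})$. Combined with the already-established bounds on $\widehat Q$ and $\widehat Q'$, we get $\|\widehat Q\|_{C^2(g_0)} \le M_0(1+\|\Rm\|_{C^0})$, which is the $k=0$ case.

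For the inductive step, suppose $\|\widehat Q\|_{C^{j+2}(g_0)} \le M_j(1 + \|\Rm\|_{C^j(g_{\underline{\widehat\omega}(t)})})$ for all $j < k$. Differentiate the rearranged formula above $k$ times in $y$. By the Leibniz rule, $\del_y^k(\widehat Q^{jk}\widehat Q''_{ik})$ equals $\widehat Q^{jk}\widehat Q^{(k+2)}_{ik}$ plus a sum of terms each of which is a product of derivatives of $\widehat Q$ and $\widehat Q^{-1}$ of order strictly less than $k+2$ — hence controlled by the inductive hypothesis (note derivatives of $\widehat Q^{-1}$ are polynomial in $\widehat Q^{-1}$ and derivatives of $\widehat Q$, and $\widehat Q^{-1}$ is bounded). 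On the right-hand side, $\del_y^k \widehat R_{yiy}^{\phantom{yiy}j}$ is bounded by $\|\Rm\|_{C^k(g_{\underline{\widehat\omega}(t)})}$ up to constants depending on the metric equivalence and on the lower-order derivatives of $\widehat Q$ (which convert coordinate derivatives $\del_y^k$ into covariant derivatives $\nabla^k$ and back); and $\del_y^k$ of the quadratic term $\widehat Q^{jl}\widehat Q'_{lp}\widehat Q^{pk}\widehat Q'_{ki}$ is, again by Leibniz, a sum of products of derivatives of $\widehat Q$ of order at most $k+1 < k+2$, hence controlled. Solving for $\widehat Q^{(k+2)}$ by multiplying by $\widehat Q$ gives $\|\widehat Q^{(k+2)}\|_{C^0(g_0)} \le M_k(1 + \|\Rm\|_{C^k(g_{\underline{\widehat\omega}(t)})})$, and folding in the lower-order bounds yields the claimed $C^{k+2}$ estimate.

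One technical point worth flagging explicitly: the curvature norm on the right is measured with respect to $g_{\underline{\widehat\omega}(t)}$ and involves covariant derivatives, whereas the left side uses plain coordinate derivatives with respect to the fixed $g_0$. Passing between the two requires the uniform equivalence of the metrics and control of the Christoffel symbols of $g_{\underline{\widehat\omega}(t)}$ and their derivatives — but the Christoffel symbols are explicit rational expressions in $\widehat Q$, $\widehat Q^{-1}$, $v_t$ and their $y$-derivatives, so these are exactly the quantities the induction is already estimating; the interdependence is triangular and closes. The main obstacle is purely bookkeeping: making sure that when one differentiates the curvature identity $k$ times, every term other than the single top-order term $\widehat Q^{jk}\widehat Q^{(k+2)}_{ik}$ genuinely involves only derivatives of order $\le k+1$ of $\widehat Q$ (equivalently $\le k$ of $\widehat Q'$), so that the inductive hypothesis applies; this is true because the quadratic term in \eqref{R-yiyj} only ever contributes $\widehat Q'$ and its derivatives, never $\widehat Q''$, so no order-$(k+2)$ term is hidden there. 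With that observation the estimate follows by the standard bootstrap, and the constants $M_k$ may be taken independent of $t$ since all structural constants (the equivalence constants, the bounds on $\widehat Q^{\pm 1}$ and $v_t$) are uniform in $t$.
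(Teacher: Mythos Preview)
Your proposal is correct and follows essentially the same route as the paper: induction on $k$, bootstrapping through the curvature identity \eqref{R-yiyj} to solve for $\widehat Q^{(k+2)}$ in terms of $\Rm$ and lower-order derivatives of $\widehat Q$, together with the observation that the Christoffel symbols of $g_{\underline{\widehat\omega}(t)}$ are explicit in $\widehat Q$ and its derivatives so that coordinate and covariant $C^k$-norms can be compared once the inductive hypothesis is in hand. Your discussion of the ``triangular'' dependence when passing between $\del_y$-derivatives and $\nabla$-derivatives is exactly the point the paper makes when it says a $C^k(g_0)$ bound on $\widehat Q$ renders the $C^k$-norms of $g_0$ and $g_{\underline{\widehat\omega}(t)}$ uniformly equivalent.
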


\begin{proof}
Note that the norm on the left-hand side is with respect to the fixed reference metric $g_0 = \diff y^2 + \diff x_1^2 +\diff x_2^2 +\diff x_3^2$ whilst on the right-hand side we use the $t$-dependent norm defined by $g_{\underline{\widehat{\omega}}(t)}$. We first explain how to pass between these norms. 

Recall from~\eqref{metric-in-new-coordinates} that the coefficients of $g_{\underline{\widehat{\omega}}(t)}$ in coordinates $(y,x_1,x_2,x_3)$ are $v_t$ and $\widehat{Q}_{ij}$, and that these are uniformly bounded above and below away from zero. It follows immediately that the $C^{0}(g_0)$ norm on tensors is uniformly equivalent to the $C^{0}(g_{\underline{\widehat{\omega}}(t)})$ norm. Now, given a $C^{k}(g_0)$ bound on $\widehat{Q}$, we obtain a $C^{k-1}$-bound on the Levi-Civita connection matrix of $g_{\underline{\widehat{\omega}}(t)}$ in the coordinates $(y,x_1,x_2,x_3)$. This means that a $C^{k}(g_0)$ bound on $\widehat{Q}$ implies that the $C^{k}$-norms of $g_0$ and $g_{\underline{\widehat{\omega}}(t)}$ are uniformly equivalent (again on tensors).  

We can now prove the Lemma by induction. For the case $k=0$, by~\eqref{R-yiyj}, and the uniform bound $0<c \leq \widehat{Q} \leq C$ we have
\[
|\widehat{Q}''_{ik}| \leq C \left( |R_{yiy}^{\phantom{yiy}j}| + |\widehat{Q}'|^2 \right)
\]
for some constant $C$. Using the fact that $\widehat{Q}$ is uniformly bounded in $C^1(g_0)$ we see that
\[
\|\widehat{Q}\|_{C^2(g_0)} 
	\leq 
		C \left( 
			\|\Rm(g_{\underline{\widehat{\omega}}(t)})\|_{C^0(g_0)}
			+
			1
		\right).
\]
Now the equivalence of the $C^{0}$-norms on tensors means we can replace $C^0(g_0)$ by $C^0(g_{\underline{\widehat{\omega}}(t)})$ on the right-hand side, which proves the inequality for $k=0$. 

Assume inductively we have the bound in the statement of the Lemma for all $k$ up to some $K-1$. We must prove it holds for the $C^{K+2}$-norm of $\widehat{Q}$. The bound for $k=K-1$ implies that the $C^{K+1}$-norms of $g_0$ and $g_{\underline{\widehat{\omega}}(t)}$ are uniformly equivalent with constants of equivalence that depend only on a $C^{K-1}$ bound on $\Rm(g_{\underline{\widehat{\omega}}(t)})$. Now from~\eqref{R-yiyj} we see that a $C^{K}(g_0)$ bound on $\Rm(g_{\underline{\widehat{\omega}}(t)})$ and a $C^{K+1}(g_0)$ bound on $\widehat{Q}$ will imply the $C^{K+2}(g_0)$ bound on $\widehat{Q}$ we seek. By induction, a $C^{K}(g_{\underline{\widehat{\omega}}(t)})$ bound on $\Rm(g_{\underline{\widehat{\omega}}(t)})$ implies a $C^{K+1}(g_0)$ bound on $\widehat{Q}$ and, by equivalence of norms, it also implies the $C^K(g_0)$-norm of $\Rm(g_{\underline{\widehat{\omega}}(t)})$ is bounded. Hence we obtain the required $C^{K+2}(g_0)$ bound on $\widehat{Q}$. 
\end{proof}

With this Lemma in hand, we have reduced the proof of Theorem~\ref{convergence} to showing uniform bounds on the curvature of $g_{\underline{\widehat{\omega}}(t)}$. To do this it turns out to be more efficient to pass to the 7-dimensional manifold $\T^4 \times \T^3$. Recall from \S\ref{introduction} that this manifold carries a path of $G_2$-structures:
\[
\widehat{\phi}(t) 
	= 
		\diff t^{123} 
			- 
			\diff t^1 \wedge \widehat{\omega}_1
			- 
			\diff t^2 \wedge \widehat{\omega}_2
			- 
			\diff t^3 \wedge \widehat{\omega}_3.
\]
We wil use bold symbols to denote tensors defined on $\T^4 \times \T^3$, so that $\mathbf{g}(t)$ denotes the metric associated to $\widehat{\phi}(t)$, $\mathbf{Rm}$ denotes its curvature tensor and so forth. (This is in keeping with the notational conventions of~\cite{FY}.)

\begin{proposition}\label{Lambda-bounded}
There is a constant $C$ such that for all $t$,
\[
\| \mathbf{Rm}\|_{C^0} + \| \mathbf{\nabla} \mathbf T\|_{C^0}
\leq 
C,
\]
where the norms are taken with respect to $\mathbf{g}(t)$.
\end{proposition}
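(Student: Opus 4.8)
The plan is to argue by contradiction, via a parabolic blow-up of the $G_2$-Laplacian flow $\widehat\phi(t)$ on $\T^7 = \T^4\times\T^3$, exploiting that every blow-up limit must be flat. Set $\Lambda(x,t) = \bigl(\|\mathbf{Rm}\|^2_{\mathbf{g}(t)} + \|\mathbf{\nabla}\mathbf{T}\|^2_{\mathbf{g}(t)}\bigr)^{1/2}$ and suppose it is unbounded on $\T^7\times[0,\infty)$. Since $\widehat\phi$ is smooth up to $t=\infty$ on the compact manifold $\T^7$, $\Lambda$ is bounded on every finite time-slab, so the standard point-selection procedure yields $(p_j,t_j)$ with $t_j\to\infty$, $\Lambda_j := \Lambda(p_j,t_j)\to\infty$, and $\Lambda\le 2\Lambda_j$ on $\T^7\times[0,t_j]$. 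I would then rescale, $\widehat\phi_j(s) := \Lambda_j^{3/2}\,\widehat\phi\bigl(t_j + \Lambda_j^{-1}s\bigr)$, with associated metrics $\mathbf{g}_j(s) = \Lambda_j\,\mathbf{g}(t_j+\Lambda_j^{-1}s)$; these solve the $G_2$-Laplacian flow on $[-\Lambda_jt_j,0]$, hence on $[-S,0]$ for $j$ large, with $\Lambda_{\mathbf{g}_j(s)}\le 2$ for $s\le 0$ and $\Lambda_{\mathbf{g}_j(0)}(p_j)=1$. The key point is that the torsion degenerates under the rescaling: $\sup_{\T^7}|\mathbf{T}|^2_{\mathbf{g}_j(s)} = \tfrac12\,\Lambda_j^{-1}\,\overline{\mathcal{T}}\bigl(t_j+\Lambda_j^{-1}s\bigr)\to 0$ uniformly for $s\in[-S,0]$, using both $\Lambda_j\to\infty$ and the decay $\overline{\mathcal{T}}(t)\to 0$ of Theorem~\ref{T-bounded}.

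Next I would set up the compactness. The metrics $\mathbf{g}(t)$ are uniformly equivalent to the fixed flat metric on $\T^7$: indeed $\mathbf{g}(t) = \widehat Q_{ij}(y,t)\,(\diff t^i\diff t^j + \diff x_i\diff x_j) + (v_t/2\pi)^2\,\diff y^2$, with $\widehat Q$ uniformly two-sided bounded by~\eqref{Q-bounded} and $0<c\le v_t\le v_\infty$. Hence every geodesic loop of $\mathbf{g}(t)$ has length bounded below by a uniform multiple of the systole of $\T^7$; together with the bound $\Lambda_{\mathbf{g}_j(0)}\le 2$ (which controls the conjugate radius, by Klingenberg's lemma) and the factor $\sqrt{\Lambda_j}$ gained from rescaling, this gives $\operatorname{inj}(\mathbf{g}_j(0),p_j)\ge i_0>0$ uniformly, and similarly along the time-slab. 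The interior higher-derivative (Shi-type) estimates for the $G_2$-Laplacian flow (see \cite{LW2,FY}) then bound $\|\mathbf{\nabla}^k\mathbf{Rm}\|$ and $\|\mathbf{\nabla}^{k+1}\mathbf{T}\|$ for $\mathbf{g}_j$ on $[-S/2,0]$ by constants depending only on $k$ and $S$. Passing to a subsequence and exhausting in $S$, the pointed flows $(\T^7,\mathbf{g}_j(s),p_j)$ converge in the $C^\infty$ Cheeger--Gromov sense on $(-\infty,0]$ to a limit Laplacian flow $(\mathcal{M}_\infty,\mathbf{g}_\infty(s),p_\infty)$, which is torsion-free (hence static) and satisfies $\Lambda_{\mathbf{g}_\infty(0)}(p_\infty)=1$; since $\mathbf{T}_{\mathbf{g}_\infty}\equiv 0$, this forces $\|\mathbf{Rm}_{\mathbf{g}_\infty(0)}\|(p_\infty)=1$.

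To close the argument I would show $\mathbf{g}_\infty(0)$ is in fact flat, contradicting the previous line; this is where the one-dimensional nature of the problem enters. Writing $p_j=(y_j,\ast)$ and rescaling the coordinates by $\sqrt{\Lambda_j}$, the components of $\mathbf{g}_j(0)$ near $p_j$ are $\widehat Q_{ij}\bigl(y_j+\Lambda_j^{-1/2}\hat y,\,t_j\bigr)$ and $(v_{t_j}/2\pi)^2$. Since $\widehat Q$ is uniformly bounded and $\widehat Q'\to 0$ in $C^0$ (by the lemma preceding Lemma~\ref{Rm-bounds-Q}) while $v_{t_j}\to v_\infty$, these converge in $C^0_{\mathrm{loc}}$ to the constants $Q^*_{ij}:=\lim\widehat Q_{ij}(y_j,t_j)$ and $(v_\infty/2\pi)^2$; feeding the bounds on $\|\mathbf{\nabla}^k\mathbf{Rm}_{\mathbf{g}_j(0)}\|$ just obtained through the curvature formula~\eqref{R-yiyj} exactly as in the proof of Lemma~\ref{Rm-bounds-Q}, this upgrades to $C^\infty_{\mathrm{loc}}$ convergence, and the limit, having vanishing first $\hat y$-derivative, is the constant $Q^*$. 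Thus $\mathbf{g}_\infty(0) = Q^*_{ij}(\diff t^i\diff t^j+\diff x_i\diff x_j)+(v_\infty/2\pi)^2\,\diff y^2$ is flat, so $\|\mathbf{Rm}_{\mathbf{g}_\infty(0)}\|\equiv 0$ — contradiction.

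The hard part is the compactness step: producing a genuine smooth pointed limit of the rescaled flows. It rests on the uniform injectivity-radius lower bound, for which the uniform equivalence of the $\mathbf{g}(t)$ to a fixed flat torus metric is exactly what is needed, and on the Shi-type interior estimates for the Laplacian flow, which convert the $C^0$ bound $\Lambda\le 2$ on a backward time-slab into bounds on all covariant derivatives of $\mathbf{Rm}$ and $\mathbf{T}$. Once these are available, the decay of the torsion ($\overline{\mathcal{T}}\to0$) together with $\widehat Q'\to 0$ forces every blow-up limit to be flat, and the contradiction follows at once.
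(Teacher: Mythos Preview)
Your argument is correct and follows the same overall contradiction-plus-rescaling template as the paper, but you diverge from the paper at exactly the two places the paper flags as specific to this situation: the non-collapsing and the way the final contradiction is obtained.

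For non-collapsing, the paper works on $\T^4$: it proves an explicit Euclidean volume lower bound for geodesic balls of $g_{\underline{\widehat\omega}(t)}$ (using the uniform metric equivalence) and then applies the Cheeger--Gromov--Taylor inequality to get the injectivity radius bound~\eqref{injectivity-bound}. You instead work on $\T^7$ and use Klingenberg: the uniform equivalence bounds the systole from below (hence, after rescaling by $\sqrt{\Lambda_j}$, the shortest geodesic loop goes to infinity), while $|\mathbf{Rm}_{\mathbf{g}_j}|\le 2$ bounds the conjugate radius from below. Both routes are valid here; yours is slightly quicker but relies on the global torus structure, whereas the paper's volume argument is the standard device in the Ricci-flow literature.

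The more interesting difference is how you rule out a non-flat limit. The paper passes to the \emph{four}-dimensional pointed limit $(X_\infty,\underline\omega_\infty,p_\infty)$, which is hyperk\"ahler; it then observes that the curves $s\mapsto(s,x_1,x_2,x_3)$ are geodesics whose length in the rescaled metric tends to infinity, so $X_\infty$ contains a line. Cheeger--Gromoll splitting then gives $X_\infty\cong Y^3\times\R$, and a hyperk\"ahler metric with a Euclidean factor is forced to be flat --- contradiction. Your route is more hands-on and stays in coordinates: you pull the rescaled $7$-metric back to $\R^7$ via $\hat y=\sqrt{\Lambda_j}(y-y_j)$ etc., note that $\partial_{\hat y}\widehat Q=\Lambda_j^{-1/2}\widehat Q'\to 0$ (using the lemma $\widehat Q'\to 0$ in $C^0$), and then bootstrap via the curvature formula~\eqref{R-yiyj} exactly as in Lemma~\ref{Rm-bounds-Q} to get $C^\infty_{\mathrm{loc}}$ convergence of the metric coefficients to a constant matrix. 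This is correct and avoids the splitting theorem entirely, at the cost of redoing the inductive bound of Lemma~\ref{Rm-bounds-Q} in the rescaled chart (which is straightforward, since the formula~\eqref{R-yiyj} is invariant under the scaling $(y,g)\mapsto(\sqrt{\Lambda}\,y,\Lambda g)$). In effect you are trading a clean geometric input (Cheeger--Gromoll) for the explicit cohomogeneity-one coordinate description; both arguments ultimately hinge on the one-dimensional nature of the problem, but yours makes that dependence more visible.
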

\begin{proof}
We prove this by contradiction and a rescaling argument, exactly as in~\cite{FY}. We sketch the argument here, focusing on the two parts which are different in our situation: the non-collapsing and the final way a contradiction is obtained.

To prepare the ground, we prove a lower bound on volume of balls. Given $p \in \T^4$, let 
\[
\Vol(p,r;t) = \Vol (B(p,r), g_{\underline{\widehat{\omega}}(t)})
\]
denote the volume of the geodesic ball of radius $r$ centered at $p$ determined by the metric $g_{\underline{\widehat{\omega}}(t)}$. We first claim that there exists $r_0>0$ and $c>0$ such that for all $t$ and $r \in [0,r_0]$, 
\[
\Vol(p,r;t) \geq c r^4.
\]
To see this note that cube-shaped domain $\Omega_r = \{(y,x_1,x_2,x_3) : |y| \leq r, |x_i| \leq r\}$ is mapped injectively to $\T^4$ as long as $r \leq \pi$. Moreover, the length of the curve $s \mapsto (\bar{y} + as, \bar{x}_i+ \xi_i s)$ is
\[
\int_0^r  \sqrt{ a^2 + \xi^T \widehat{Q}(as,t) \xi} \diff s \leq Cr
\]
for some $r$, as long as $|a| \leq 1$ and $|\xi| \leq 1$. So the image of $\Omega_r$ is contained in the geodesic ball of radius $Cr$. This implies 
\[
\Vol(p,r;t) \geq \int_{\Omega_{r/C}} \frac{v_t}{2\pi} \diff y \wedge \diff x_{123} \geq c r^4
\]
for some $c$, as long as $r \leq \pi/C$.

This lower bound on volume, together with the Cheeger--Gromov--Taylor inequality~\cite{CGT} implies that, for some constant $i_0$, we have a uniform lower bound on the injectivity radius of the form
\begin{equation}
\inj (\T^4, g_{\underline{\widehat{\omega}}(t)})
	\geq
		\frac{i_0}{\left\| \Rm(g_{\underline{\widehat{\omega}}(t)})\right\|^{1/2}_{C^0} + 1}.
\label{injectivity-bound}
\end{equation}

We now explain how---assuming the statement of the proposition is false--- this enables us to take a rescaled limit. To ease notation, write
\[
\Lambda(q,t) =\left(  |\mathbf{Rm}| + |\mathbf{\nabla}\mathbf{T}|\right)(q,t).
\]
Assuming the result is false, there exist a sequence of points $q_k \in \T^4\times \T^3$ and times $t_k \to \infty$ such that 
\[
\Lambda(q_k,t_k) := \sup_{t \in [0, t_k]} \Lambda(q,t) \to \infty.
\]
We abbreviate $\Lambda_k = \Lambda(q_k,t_k)$. We now define a rescaled sequence of flows by
\begin{align*}
\phi^{(k)}(t) 
	&= 
		\Lambda^{3/2}_k \widehat{\phi}( \Lambda^{-1}_k t + t_k),\\
\underline{\omega}^{(k)}(t)
	&=
		\Lambda_k \underline{\widehat{\omega}}(\Lambda^{-1}_k t + t_k).
\end{align*}
(We remark that there is a mistake in both \cite{FY,FY2} and~\cite{HWY}, where in analogous discussions the scaling of $\phi^{(k)}$ is with the factor $\Lambda_k$ rather than $\Lambda_k^{3/2}$. In the proof of Theorem 4.10 of \cite{FY2} there is also a missing factor $K^2$ in front of $t$ in  $\omega'(t)$ ). The $\phi^{(k)}$ are a sequence of flows defined for $t \in [-\Lambda_k t_k, \infty)$ which are diffeomorphic to $G_2$-Laplacian flows. The rescaling ensures that the corresponding metrics in 7 and 4 dimensions scale as $g_{\phi^{(k)}}  = \Lambda_k g_{\phi}$ and $g_{\underline{\omega}^{(k)}} = \Lambda_k g_{\underline{\omega}}$ respectively. In particular,
\[
\| \mathbf{Rm}(\phi^{(k)}(t))\|_{C^0} 
+ 
\| \mathbf{\nabla} \mathbf{T}(\phi^{(k)}(t))\|_{C^0} 
\leq
1
\]
on $t \in[-\Lambda_k t_k,0]$. Now the Shi-type estimates of Lotay--Wei~\cite{LW1} for the $G_2$-Laplacian flow give that for any $A >0$ and $l$, we have 
\[
\|\mathbf{\nabla}^l \mathbf{Rm}(\phi^{(k)}(t))\|_{C^{0}}
+
\| \mathbf{\nabla}^{l+1} \mathbf{T}(\phi^{(k)}(t))\|_{C^{0}} \leq C_{A,l}
\]
for some constant $C_{A,l}$. (It is the fact that these Shi-type estimates are known for the $G_2$-Laplacian flow, as opposed to the hypersymplectic flow, which forces us to pass somewhat artificially to the 7-manifold.)

From here, arguing as in~\cite{FY}, we obtain uniform bounds on the following \emph{four}-dimensional quantities, again over $t \in [-A,0]$:
\[
\| \Rm(\underline{\omega}^{(k)}(t))\|_{C^l}, \| \underline{\omega}^{(k)}(t)\|_{C^l} \leq C_{A,l}.
\]
Moreover, after rescaling the injectivity radius bound~\eqref{injectivity-bound} gives a uniform lower bound $\inj(\T^4, \underline{\omega}^{(k)}(t)) \geq c$. So we can now take a pointed Cheeger--Gromov limit
\[
(\T^4, \underline{\omega}^{(k)}(0), p_k) \to (X_\infty, \underline{\omega}_\infty, p_\infty)
\] 
where $(X_\infty, \underline{\omega}_{\infty})$ is hyperk\"ahler. The points $p_k \in \T^4$ here are the $\T^4$ projections of the initial points $q_k \in \T^4 \times \T^3$ where the initial blow-up was assumed to happen. (The details of this argument can be found in the final section of~\cite{FY}.)

Our choice of points and rescaling ensures that $\Rm(g_{\underline{\omega}_{\infty}})(p_\infty) \neq 0$ and it is this that we will now show leads to a contradiction. Directly from the expression~\eqref{metric-in-new-coordinates} for  $g_{\underline{\widehat{\omega}}(t)}$ we see that for any $(x_1,x_2,x_3)$ the curve $s \mapsto (s,x_1,x_2,x_3)$ is a geodesic. Moreover it has length uniformly bounded below away from zero. This means that for the rescaled metrics determined by $ \underline{\omega}^{(k)}(t)$, each point $p_k$ lies on a geodesic whose length tends to infinity. So in the limit, $X_{\infty}$  must contain a geodesic line. Since it is Ricci-flat, Cheeger--Gromoll's splitting theorem~\cite{CG} tells us that it is isometric to $Y^3 \times \R$. But now, since it is hyperk\"ahler, this forces it to be flat, giving us our contradiction.
\end{proof}

We can now finish the proof of convergence of the hypersymplectic flow. 

\begin{proof}[Proof of Theorem~\ref{convergence}]
From Proposition~\ref{Lambda-bounded} together with Lotay--Wei's Shi-type estimates~\cite{LW1} we see that there is a constant $C_k$ such that for all $t \in [0,\infty)$,
\[
\| \mathbf{\nabla}^k \mathbf{Rm}(\widehat{\phi}(t))\|_{C^0}
+
\| \mathbf{\nabla}^{k+1} \mathbf{T}(\widehat{\phi}(t))\|_{C^0}
	\leq 	
		C_k.
\]
Corollary~3.3 of~\cite{FY} now implies that
\[
\| \nabla^k \Rm(g_{\underline{\widehat{\omega}}(t)})\|_{C^0} \leq C_k.
\]
Lemma~\ref{Rm-bounds-Q}  implies that $\widehat{Q}$ is bounded in $C^k$ for all $k$. Arzela--Ascoli now implies that for any sequence of times $t_k \to \infty$, there is a subsequence $t_{k_n}$ for which $\widehat{Q}({t_{k_n}} )\to \widehat{Q}^\infty$ converges in $C^{\infty}$. Since $\widehat{Q}' \to 0$, the limit $\widehat{Q}^{\infty}$ is constant. Moreover, the value of $\widehat{Q}^{\infty}$ is independent of the sequence we started with. This is because the following integral is cohomological and so independent of time:
\[
\int_{\T^4} \omega_i(t) \wedge \omega_j(t)
	=
		\int_{\T^4}\widehat{\omega}_i(t) \wedge \widehat{\omega}_{j}(t)
			=
				\int_{\T^4} 2 \widehat{Q}_{ij}(y,t) \frac{v_t}{2\pi} \diff y \diff x_{123}.
\]
From this we deduce that
\[
\widehat{Q}_{ij}^\infty = \frac{2\pi}{v_\infty} \int_0^{2\pi} \alpha_{ij}(x_0,0) \diff x_0
\]
is determined purely by the starting data $\underline{\omega}(0)$. It now follows that the whole path converges: $\widehat{Q}(t) \to \widehat{Q}^{\infty}$ in $C^\infty$.  For if not, there would exist $\epsilon>0$, a sequence of times $t_k \to \infty$ and an integer $l$ for which 
\[
\| \widehat{Q}(t_k) - \widehat{Q}^\infty\|_{C^l} \geq \epsilon.
\]
Then no subsequence of $\widehat{Q}(t_k)$ could converge to $\widehat{Q}^{\infty}$, a contradiction which completes the proof. 
\end{proof}

	\end{document}